\newtheorem{theorem}{Theorem}[section]
\newtheorem{proposition}[theorem]{Proposition}
\newtheorem{definition}[theorem]{Definition}
\newtheorem{claim}[theorem]{Claim}
\newtheorem{lemma}[theorem]{Lemma}
\newtheorem{corollary}[theorem]{Corollary}
\newtheorem{remark}[theorem]{Remark}
\newcommand{\qedsymb}{\hfill{\rule{2mm}{2mm}}}
\renewenvironment{proof}[1][]{\begin{trivlist}
\item[\hspace{\labelsep}{\bf\noindent Proof#1:\/}] }{\qedsymb\end{trivlist}}
\def\calC{{\cal C}}
\def\calD{{\cal D}}
\def\calA{{\cal A}}
\def\calB{{\cal B}}
\def\calF{{\cal F}}
\def\calT{{\cal T}}
\def\calH{{\cal H}}
\def\calS{{\cal S}}
\def\calI{{\cal I}}
\def\Z{{\mathbb{Z}}}
\def\N{\mathbb{N}}
\def\mod{\mbox{mod}}
\newcommand{\eps}{\epsilon}
\renewcommand{\epsilon}{\varepsilon}
\begin{document}

\title{{\bf Sum-free Sets of Integers with a Forbidden Sum}}

\author{
Ishay Haviv\thanks{School of Computer Science, The Academic College of Tel Aviv-Yaffo, Tel Aviv 61083, Israel.
}
}

\date{}

\maketitle

\begin{abstract}
A set of integers is {\em sum-free} if it contains no solution to the equation $x+y=z$.
We study sum-free subsets of the set of integers $[n]=\{1,\ldots,n\}$ for which the integer $2n+1$ cannot be represented as a sum of their elements.
We prove a bound of $O(2^{n/3})$ on the number of these sets, which matches, up to a multiplicative constant, the lower bound obtained by considering all subsets of $B_n = \{ \lceil \frac{2}{3}(n+1) \rceil, \ldots, n \}$.
A main ingredient in the proof is a stability theorem saying that if a subset of $[n]$ of size close to $|B_n|$ contains only a few subsets that contradict the sum-freeness or the forbidden sum, then it is almost contained in $B_n$.
Our results are motivated by the question of counting symmetric complete sum-free subsets of cyclic groups of prime order.
The proofs involve Freiman's $3k-4$ theorem, Green's arithmetic removal lemma, and structural results on independent sets in hypergraphs.

\end{abstract}

\section{Introduction}

For an abelian additive group $G$, a set $A \subseteq G$ is {\em sum-free} if there are no $x,y,z \in A$ such that $x+y=z$.
The study of sum-free sets was initiated in 1916 by Schur~\cite{Schur1916}, who proved that the set of nonzero integers cannot be partitioned into a finite number of sum-free sets.
His work was originally motivated by an attempt to prove the famous Fermat's Last Theorem, which states that the set of all $k$th powers of nonzero integers is sum-free for every $k \geq 3$. To date, over a century later, sum-free sets play a fundamental role in the area of additive combinatorics and enjoy an intensive and fruitful line of research.

Sum-free subsets of the set of integers $[n] = \{1,\ldots,n\}$ have attracted a significant attention in the literature over the years. It is easy to show that the largest size of a sum-free subset of $[n]$ is $\lceil n/2 \rceil$, attained by the set of odd integers in $[n]$ and by the integer interval $[ \lfloor n/2 \rfloor+1, n]$. Cameron and Erd{\H{o}}s~\cite{CameronErdos90} raised the question of counting the sum-free subsets of $[n]$ and conjectured that there are $O(2^{n/2})$ such sets. Their conjecture was confirmed more than a decade later by Green~\cite{Green04} and by Sapozhenko~\cite{Sapozhenko03} independently. More recently, Alon, Balogh, Morris, and Samotij~\cite{AlonRefine14} proved a refined version of the conjecture, providing a bound of $2^{O(n/m)} \cdot {\binom {\lceil n/2 \rceil}{m}}$ on the number of sum-free subsets of $[n]$ of size $m$ for every $1 \leq m \leq \lceil n/2 \rceil$.
The study of structural characterization of sum-free sets of integers was initiated by Freiman~\cite{Freiman92} who showed, roughly speaking, that every sum-free subset of $[n]$ of density greater than $5/12$ either consists entirely of odd integers or is close to an interval.
Freiman's result was extended to all sum-free subsets of $[n]$ of density greater than $2/5$ in an unpublished work by Deshouillers, Freiman, and S\'{o}s.
However, their characterization does not hold for smaller subsets, and the $2/5$ barrier was handled by a more complicated characterization due to Deshouillers, Freiman, S\'{o}s, and Temkin~\cite{DeshouillersFST99}, that was recently further extended by Tran~\cite{Tran17}.

Another setting of great interest in the study of sum-free sets is the cyclic group $\Z_p$ of prime order $p$. The largest size of a sum-free subset of $\Z_p$ is known to be $\lfloor (p+1)/3 \rfloor$.
An explicit characterization of the sum-free sets that attain the largest size was provided in the late sixties by Yap~\cite{Yap68,Yap70}, Diananda and Yap~\cite{DianandaYap69}, and Rhemtulla and Street~\cite{RhemtullaStreet70}. In 2004, Green~\cite{Green04} proved an essentially tight upper bound of $2^{( \frac{1}{3}+o(1)) \cdot p}$ on the number of sum-free subsets of $\Z_p$.
As for their structure, Deshouillers and Lev~\cite{DLev08} proved, improving on Lev~\cite{Lev06} and Deshouillers and Freiman~\cite{DeshouillersF06}, that every sum-free subset of $\Z_p$ whose density is at least $0.318$ is contained, up to an automorphism, in a bounded-size central interval of $\Z_p$.

For a prime $p$, a set $S \subseteq \Z_p$ is said to be {\em symmetric} if $x \in S$ implies that $-x \in S$, and {\em complete} if every element of $\Z_p \setminus S$ can be represented as a sum of two (not necessarily distinct) elements of $S$. The family of symmetric complete sum-free subsets of $\Z_p$ has been considered in the literature, motivated by several applications, such as the study of regular triangle-free graphs with diameter $2$~\cite{HansonS84}, random sum-free sets of positive integers~\cite{Cameron87b,CalkinC98}, and dioid partitions of the group $\Z_p$~\cite{HavivLdioid17} (see~\cite{Cameron87} for a survey). In a recent work~\cite{HavivLsf17}, a full characterization was provided for the symmetric complete sum-free subsets of $\Z_p$ of size at least $(\frac{1}{3}-c) \cdot p$, where $c>0$ is a universal constant. Somewhat surprisingly, this characterization reduces the challenge of counting the symmetric complete sum-free subsets of $\Z_p$ of a given sufficiently large size to counting certain sets of integers.
As will be shortly explained, this question motivates the study of sum-free sets of integers with a forbidden sum, considered in the current work and described below.

\subsection{Sum-free Sets of Integers with a Forbidden Sum}

Let $n \geq 1$ be an integer. For a set $A \subseteq [n]$ and an integer $k \geq 0$, denote
\[kA = \Big\{ \sum_{i=1}^{k}{a_i} ~\Big{|}~ a_1,\ldots,a_k \in A \Big \},\]
and let $\sum A = \cup_{k \geq 0}(kA)$.
In this work we study sum-free subsets of $[n]$ for which the integer $2n+1$ is a forbidden sum, that is, sets $A \subseteq [n]$ satisfying $A \cap 2A = \emptyset$ and $2n+1 \notin \sum A$.
For example, it is easy to see that the interval $B_n = [ \lceil \frac{2}{3}(n+1) \rceil , n]$ satisfies these properties and has size $|B_n| = \lfloor \frac{1}{3}(n+1) \rfloor$. We start with the extremal question of how large can such a set be, and prove the following tight upper bound.

\begin{theorem}\label{thm:extremalIntro}
For every integer $n$, every sum-free set $A \subseteq [n]$ such that $2n+1 \notin \sum A$ satisfies
\[|A| \leq \Big\lfloor \frac{1}{3}(n+1) \Big \rfloor.\]
\end{theorem}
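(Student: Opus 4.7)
Plan: The plan is to exhibit three pairwise disjoint subsets of $[1,n]$ whose total cardinality equals $|A| + |A+A|$, and then to apply the elementary lower bound $|A+A| \ge 2|A| - 1$.

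Set $m = |A|$ and define
\[
X_1 = A, \qquad X_2 = (A+A) \cap [1,n], \qquad X_3 = \bigl(2n+1 - (A+A)\bigr) \cap [1,n],
\]
each contained in $[1,n]$ (noting $A+A \subseteq [2,2n]$, so $2n+1 - (A+A) \subseteq [1, 2n-1]$). I would verify the three sets are pairwise disjoint. The pair $X_1, X_2$ is disjoint by sum-freeness. If some $c \in X_1 \cap X_3$, then $c \in A$ and $c = 2n+1 - (a+b)$ for some $a,b \in A$, giving $a+b+c = 2n+1 \in 3A \subseteq \sum A$, a contradiction. If some $s \in X_2 \cap X_3$, then $s = a+b = 2n+1 - (c+d)$ for $a,b,c,d \in A$, yielding $a+b+c+d = 2n+1 \in 4A \subseteq \sum A$, again contradicting the hypothesis.

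Next I would compute sizes. Let $y := |(A+A) \cap [n+1, 2n]|$. Then $|X_2| = |A+A| - y$, and since $t \in [n+1, 2n]$ if and only if $2n+1 - t \in [1,n]$, also $|X_3| = y$. Summing, $|X_1| + |X_2| + |X_3| = m + |A+A|$. Because the three sets are disjoint subsets of $[1,n]$, we obtain $m + |A+A| \le n$. Using the classical sumset inequality $|A+A| \ge 2m-1$ (valid for any finite $A \subseteq \Z$), this rearranges to $3m \le n+1$, hence $m \le \lfloor (n+1)/3 \rfloor$.

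No serious obstacle is anticipated: the hypothesis $2n+1 \notin \sum A$ is used only through the two instances $2n+1 \notin 3A$ and $2n+1 \notin 4A$, which drive the two non-trivial disjointness verifications. The key idea is the introduction of the reflected set $X_3 = (2n+1 - (A+A)) \cap [1,n]$, whose cardinality exactly compensates for the portion of $A+A$ lost when truncating $A+A$ to $[1,n]$. In particular, the inequality is tight for $B_n$, where $X_2 = \emptyset$ and $X_1 \cup X_3 = [1,n]$.
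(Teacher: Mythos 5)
Your proof is correct and is essentially the paper's argument: the paper considers $C = A \cup (A+A) \subseteq [2n]$ and shows $|C| \le n$ by pairing $i$ with $2n+1-i$, which is exactly your folding of $(A+A)\cap[n+1,2n]$ into $X_3$, and both proofs then finish with $|A+A| \ge 2|A|-1$. Like the paper, you only use the hypotheses $2n+1 \notin 3A$ and $2n+1 \notin 4A$, so you in fact recover the paper's stronger Theorem~\ref{thm:n/3}.
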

\noindent
In fact, we prove a stronger statement than that of Theorem~\ref{thm:extremalIntro}, providing the same bound under the weaker assumption $2n+1 \notin (3A) \cup (4A)$ rather than $2n+1 \notin \sum A$. This is tight in the sense that the bound is no longer true if we only require $2n+1 \notin 3A$ (see Section~\ref{sec:3A}).

Equipped with the tight answer to the extremal question, we turn to provide a corresponding robust stability theorem, which roughly speaking says the following:
If a subset of $[n]$ of size close to $|B_n|$ contains only a few subsets that contradict the sum-freeness or the forbidden sum, then it is almost contained in $B_n$.
To state the result precisely, let us introduce the following notation.
For an integer $n$, let $\calF^{(3)}_n$ denote the collection of all sets $\{x,y,z\} \subseteq [n]$ of distinct $x,y,z$ satisfying $x+y=z$ or $x+y+z=2n+1$.
For $k \geq 4$, let $\calF^{(k)}_n$ denote the collection of all sets $\{x_1,\ldots,x_k\} \subseteq [n]$ of distinct $x_1,\ldots,x_k$ satisfying $\sum_{i=1}^{k}{x_i}=2n+1$. Our stability theorem is the following.

\begin{theorem}\label{thm:stabilityIntro}
For every $\eps >0$ there exists $\delta = \delta(\eps) > 0$ such that for every sufficiently large integer $n$ the following holds.
Every set $A \subseteq [n]$ of size $|A| \geq (\frac{1}{3}-\delta) \cdot n$ contains at least $\delta \cdot n^{k-1}$ sets from $\calF^{(k)}_n$ for some $k \in \{3,4,5\}$ or satisfies $|A \setminus B_n| \leq \eps n$.
\end{theorem}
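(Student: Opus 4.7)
Fix $\eps > 0$ and let $\delta > 0$ be small (to be chosen in terms of $\eps$). Suppose $A \subseteq [n]$ has $|A| \geq (\tfrac13 - \delta)n$ and contains fewer than $\delta n^{k-1}$ sets from $\calF_n^{(k)}$ for every $k \in \{3,4,5\}$; the goal is $|A \setminus B_n| \leq \eps n$. The plan is a quantitative version of the proof of Theorem~\ref{thm:extremalIntro}: first clean $A$ using Green's removal lemma, then embed the result in a short arithmetic progression via Freiman's $3k{-}4$ theorem, and finally pin down that progression near $B_n$.

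\textbf{Cleaning and Freiman embedding.} Applying Green's arithmetic removal lemma separately to $x+y=z$ and to $x_1+\cdots+x_k = 2n+1$ for $k \in \{3,4,5\}$ (and absorbing the $O(n^{k-2})$ degenerate solutions with repeated arguments) yields $A' \subseteq A$ with $|A \setminus A'| \leq \eps' n$, where $\eps' \to 0$ as $\delta \to 0$, such that $A'$ is sum-free and $2n+1 \notin (3A') \cup (4A') \cup (5A')$; in particular $|A'| \geq (\tfrac13-\delta-\eps')n$. A short check shows that the four sets $A'$, $A'+A'$, $(2n+1)-A'$, and $(2n+1)-A'-A'$ are pairwise disjoint subsets of $[1,2n]$ (disjointness of $A'$ from the others uses $a+a' \leq 2n$, sum-freeness, and $2n+1 \notin 3A'$; the remaining pairs use $2n+1 \notin 3A' \cup 4A'$). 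Hence $|A'|+|A'+A'| \leq n$, so $|A'+A'|-2|A'|+1 = O((\delta+\eps')n) < |A'|-4$ for small parameters. Freiman's $3k{-}4$ theorem now embeds $A'$ in an arithmetic progression $P = \{a, a+d, \ldots, a+(L-1)d\}$ with $L \leq |A'|+O((\delta+\eps')n)$; thus $|P \setminus A'| = O((\delta+\eps')n)$, and $(L-1)d \leq n$ together with the lower bound on $L$ forces $d \in \{1,2,3\}$.

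\textbf{From $P$ to $B_n$.} If $d=2$ with $a$ even (so $P \subseteq 2\Z$), dividing $A'$ by $2$ gives a sum-free subset of $[\lfloor n/2\rfloor]$ of density $\tfrac23 - O(\delta)$, exceeding the extremal density $\tfrac12$; contradiction. In every other case with $d \in \{2,3\}$, a residue computation selects $k \in \{3,4,5\}$ such that $kP$ is an arithmetic progression of step $d$ passing through $2n+1$ with $\Omega(L^{k-1})$ representations; since each element of $P \setminus A'$ destroys at most $O(L^{k-2})$ of these, and $|P \setminus A'|/L$ is small, a positive fraction survive in $A'$, contradicting $2n+1 \notin kA'$. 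Hence $d=1$ and $P = [a, a+L-1]$. The same type of density argument applied to $2n+1 \notin 3A'$ rules out $a \leq n/3 + O((\delta+\eps')n)$ (else $3A'$ would contain $2n+1$ interiorly with many representations); combined with $a+L \leq n+1$ and $L \geq (\tfrac13 - O(\delta+\eps'))n$, this forces $a \geq \lceil \tfrac{2(n+1)}{3} \rceil - O((\delta+\eps')n)$. Therefore $A' \subseteq [\lceil \tfrac{2(n+1)}{3} \rceil - O((\delta+\eps')n), n]$, and $|A \setminus B_n| \leq |A \setminus A'| + |A' \setminus B_n| \leq \eps n$ provided $\delta$ is chosen small enough.

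\textbf{Main obstacle.} The delicate step is the residue-class elimination of $d \in \{2,3\}$: one must check, for each combination of $a \bmod d$ and $n \bmod d$, that at least one of $3P, 4P, 5P$ hits $2n+1$ interiorly with $\Omega(L^{k-1})$ representations and that this count is robust to the linear-in-$n$ density deficit of $A'$ inside $P$. This analysis uses all three of the hypotheses on $\calF_n^{(3)}, \calF_n^{(4)}, \calF_n^{(5)}$; in particular, $k=5$ is needed in the cases $d=3$ with $(n \bmod 3, a \bmod 3) \in \{(2,1),(0,2)\}$, where both $3P$ and $4P$ lie in a wrong residue class modulo $3$.
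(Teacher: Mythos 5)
Your overall strategy is the same as the paper's: clean $A$ with Green's removal lemma, bound $|A'|+|A'+A'|$ by $n$ (your four-set disjointness argument is equivalent to the paper's pigeonhole on pairs $\{i,2n+1-i\}$), invoke Freiman's $3k-4$ theorem to get an arithmetic progression of difference $d\in\{1,2,3\}$, and eliminate $d\in\{2,3\}$ by residue-plus-counting arguments. Those parts are sound (your direct counting for $d=2$ with odd elements even avoids the paper's detour through an auxiliary lemma), and you correctly identify where $k=5$ is forced in the $d=3$ cases.

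The gap is in the $d=1$ endgame. You claim that $2n+1\notin 3A'$ ``rules out $a\le n/3+O((\delta+\eps')n)$ (else $3A'$ would contain $2n+1$ interiorly).'' This is backwards: for $P=[a,a+L-1]$ with $L\approx n/3$ we have $3P\subseteq[3a,3a+n]$, so when $a\le n/3$ the sum $2n+1$ lies \emph{above} the range of $3P$ and the constraint gives nothing. What the $3A'$-constraint actually excludes is the \emph{middle} range $a\in(\frac{n}{3}+\Omega(\eps n),\frac{2n}{3}-\Omega(\eps n))$, leaving both $a\le n/3+O(\eps n)$ and $a\ge 2n/3-O(\eps n)$ alive. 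A single constraint cannot close this: the interval $P=[\frac{n}{3},\frac{2n}{3}]$ is sum-free, satisfies $2n+1\notin 3P$ (since $3P\subseteq[n,2n]$), and has the right length, yet is nowhere near $B_n$ --- it is killed only by $2n+1\in 4P$. The correct argument is a bootstrap using all three hypotheses in order: sum-freeness forces $B$ and $B+B$ to be nearly disjoint, giving $a\ge(\frac13-O(\eps))n$; then $2n+1\notin 4A'$ forces $B$ to be nearly disjoint from $(2n+1)-3B$, which together with the previous bound gives $a\ge(\frac12-O(\eps))n$; only then does $2n+1\notin 3A'$ (near-disjointness of $B$ from $(2n+1)-2B$) yield $a\ge(\frac23-O(\eps))n$. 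Your sketch omits the sum-freeness and $4A'$ stages entirely in this case, so as written it does not pin $P$ to the top of $[n]$.
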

\noindent
The proof of Theorem~\ref{thm:stabilityIntro} employs the celebrated Freiman's $3k-4$ theorem~\cite{Freiman59} as well as Green's arithmetic removal lemma~\cite{Green05}.
We remark that, in contrast to Theorem~\ref{thm:extremalIntro}, it is essentially unavoidable to involve the sets of $\calF^{(5)}_n$ in the statement of Theorem~\ref{thm:stabilityIntro}. To see this, consider the set of odd integers in the interval $[1, \lfloor \frac{2n}{3} \rfloor]$ and observe that it is disjoint from $B_n$, contains no set of $\calF^{(3)}_n \cup \calF^{(4)}_n$, and yet has size $|B_n|$.

We finally turn to the question of counting the sum-free sets $A \subseteq [n]$ with $2n+1 \notin \sum A$. Considering all subsets of $B_n$ easily yields a lower bound of $2^{\lfloor \frac{1}{3}(n+1) \rfloor}$. We prove that this is tight up to a multiplicative constant.

\begin{theorem}\label{thm:countingIntro}
There are $\Theta(2^{n/3})$ sum-free sets $A \subseteq [n]$ satisfying $2n+1 \notin \sum A$.
\end{theorem}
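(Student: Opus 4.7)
The lower bound $\Omega(2^{n/3})$ is immediate: every subset of $B_n$ is valid, since any sum of two elements of $B_n$ (with repetition) lies in $[2\lceil \tfrac{2}{3}(n+1)\rceil, 2n]$, hence avoids both $B_n$ and $\{2n+1\}$, while any sum of three or more elements exceeds $2n+1$. This yields $2^{|B_n|}$ valid sets. For the upper bound the plan is to fix a small $\eps>0$, obtain $\delta=\delta(\eps)>0$ from Theorem~\ref{thm:stabilityIntro}, and partition the valid sets $A$ by the size of $A\setminus B_n$ into three classes: (i) $A\subseteq B_n$; (ii) $0<|A\setminus B_n|\leq \eps n$; (iii) $|A\setminus B_n|>\eps n$. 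Class (i) contributes trivially at most $2^{|B_n|}$.

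For class (ii), I would enumerate the outside part $S=A\setminus B_n$ and count the admissible inside parts $T=A\cap B_n$. For any $S\neq \emptyset$, picking any $a\in S$, two collections of pairs in $B_n$ are forbidden from $T$: the pairs $\{b,a+b\}\subseteq B_n$ (from sum-freeness) and the pairs $\{b,c\}\subseteq B_n$ with $b+c=2n+1-a$ (from avoidance of $\calF^{(3)}_n$). A short case analysis on the position of $a$ should show that together these pairs contain a matching of size at least $c_0 n$ for an absolute constant $c_0>0$ uniformly in $a\in [n]\setminus B_n$, which bounds the number of admissible $T$ by $(3/4)^{c_0 n}\cdot 2^{|B_n|}$. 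The number of nonempty subsets $S\subseteq [n]\setminus B_n$ of size at most $\eps n$ has the form $2^{\eta(\eps) n}$ with $\eta(\eps)\to 0$ as $\eps\to 0$, so choosing $\eps$ small enough that $\eta(\eps)$ is dominated by the matching saving, class (ii) contributes $o(2^{n/3})$.

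For class (iii), by the contrapositive of Theorem~\ref{thm:stabilityIntro} every such $A$ satisfies $|A|<(\tfrac{1}{3}-\delta)n$. I plan to bound their count using the hypergraph container method applied to the hypergraph on $[n]$ with edge set $\calF^{(3)}_n\cup \calF^{(4)}_n\cup \calF^{(5)}_n$: this should produce a family of $2^{o(n)}$ containers, each either of size at most $(\tfrac{1}{3}-\delta) n$ or of the form $C$ with $|C\setminus B_n|\leq \eps n$. Since $A\subseteq C$ in a container of the second type forces $|A\setminus B_n|\leq \eps n$ and so excludes class (iii), all class (iii) sets lie in containers of the first type, contributing $2^{o(n)+(\tfrac{1}{3}-\delta)n}=o(2^{n/3})$. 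Adding the three classes yields $(1+o(1))\cdot 2^{|B_n|}=O(2^{n/3})$. The main obstacle will be the uniform matching lower bound $c_0 n$ required in class (ii): the sum-freeness matching shrinks as $a$ approaches $\lceil \tfrac{2}{3}(n+1)\rceil$ while the triple-sum matching grows, so a balanced case analysis on the location of $a$ is needed to ensure that at least one of the two is always linear in $n$.
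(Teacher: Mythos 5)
Your lower bound and your class (iii) treatment are sound in outline — class (iii) is essentially what the paper does, feeding the stability theorem into the Balogh--Morris--Samotij container machinery (Item~\ref{itm:BMS_2} of Theorem~\ref{thm:BMS_items}). The genuine gap is in class (ii), and it is not a technicality: the uniform linear matching you need does not exist, and in fact your intermediate claim that class (ii) contributes $o(2^{n/3})$ is false. Take $n=3m+2$, so $B_n=[2m+2,3m+2]$, and let $a=2m+1$, the largest element outside $B_n$. Sum-freeness imposes no constraint linking $a$ to $B_n$ (all sums $a+b$ with $b\in B_n$ exceed $n$, and $b+c>a$ for $b,c\in B_n$), and the only representations of $2n+1=6m+5$ using $a$ and elements of $B_n$ are $a+2(2m+2)$ and $2a+(2m+3)$; four or more summands from $\{a\}\cup B_n$ already exceed $2n+1$. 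So every $T\subseteq B_n\setminus\{2m+2,2m+3\}$ gives a valid set $\{a\}\cup T$, i.e.\ $\Omega(2^{n/3})$ sets in class (ii) from this single choice of $S$. More generally, for $a=\lceil\tfrac{2}{3}(n+1)\rceil-j$ the sum-freeness pairs $\{b,a+b\}\subseteq B_n$ vanish entirely once $a>n/3$, and the pairs $\{b,c\}\subseteq B_n$ with $b+c=2n+1-a$ number only $O(j)$, since $b+c\geq 2\lceil\tfrac{2}{3}(n+1)\rceil$ forces $b,c$ into an interval of length $O(j)$. Your closing remark has the asymptotics backwards: as $a$ approaches $\lceil\tfrac{2}{3}(n+1)\rceil$ the triple-sum matching \emph{shrinks to zero} rather than grows, so no "balanced case analysis" on the location of a single $a\in S$ can rescue the argument; with savings as weak as $(3/4)^{O(j)}$ you cannot beat the $2^{\Theta(H(\eps))n}$ choices of $S$.

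This dead zone $a\in(\tfrac{n}{2},\tfrac{2n}{3})$ is precisely what forces the paper's hardest component. The paper uses the matching argument only for sets meeting $[\tfrac{n}{2}]$ (Lemma~\ref{lemma:o(2^{n/3})}), where one of the two matchings always has size at least $n/12$, and handles sets contained in $[\tfrac{n}{2}+1,n]$ by a completely different route: an $O(2^{2n/3})$ bound on the number of \emph{all} sets $A\subseteq[n]$ with $n+1\notin 3A$ (Theorem~\ref{thm:Grn/2}, applied after translation in Corollary~\ref{cor:Larger_n/2}), proved by conditioning on $S=A\cap[\tfrac{n}{3}]$ and combining Janson's inequality with the Green--Morris bound on the number of sets with small sumset. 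You would need an ingredient of comparable strength to close class (ii); the per-element matching heuristic alone cannot do it.
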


The proof of the upper bound in Theorem~\ref{thm:countingIntro} involves two main components.
The first is the study of structural characterization of independent sets in hypergraphs developed by
Saxton and Thomason~\cite{SaxtonT2015} and by Balogh, Morris, and Samotij~\cite{BaloghMS15}, building on a technique of Kleitman and Winston~\cite{KleitmanW82} (see also~\cite{Samotij15}).
We employ a general theorem of~\cite{BaloghMS15} transferring stability results to bounds on the number of independent sets in hypergraphs. This allows us to derive from Theorem~\ref{thm:stabilityIntro} that most sum-free subsets of $[n]$ with $2n+1$ as a forbidden sum are almost contained in the set $B_n$. The second component of the proof, used to count these sets, is an $O(2^{2n/3})$ bound on the number of sets $A \subseteq [n]$ for which $n+1 \notin 3A$ (see Theorem~\ref{thm:Grn/2}; Note that the sum-freeness constraint is not considered here). This bound, which might be of independent interest, is tight up to a multiplicative constant, as follows by considering all subsets of the interval $[\lceil \frac{1}{3}(n+2) \rceil,n]$. Its proof is inspired by a counting technique due to Alon et al.~\cite{AlonRefine14}, and uses Janson's inequality and a bound of Green and Morris~\cite{GreenM16} on the number of sets of integers with small sumset.

Although the current work focuses on the forbidden sum $2n+1$, we remark that Theorem~\ref{thm:countingIntro} can be extended to other forbidden sums around $2n$ as well (see Section~\ref{sec:other_forbidden}).
However, for even forbidden sums in this regime, the situation is somewhat different in the sense that a similar bound of $2^{(\frac{1}{3} + o(1)) \cdot n}$ holds even without assuming the sum-freeness of the sets (see Proposition~\ref{prop:2n}).

\subsection{Symmetric Complete Sum-free Sets in Cyclic Groups}

For a prime $p$, we consider the family of symmetric complete sum-free subsets of the cyclic group $\Z_p$.
It is easy to deduce from the classification results of~\cite{Yap68,Yap70,DianandaYap69,RhemtullaStreet70} that the largest possible size of such a set is $\lfloor (p+1)/3 \rfloor$, attained uniquely, up to an automorphism, by the set $[k+1,2k+1]$ for $p=3k+2$ and by the set $\{k\} \cup [k+2,2k-1] \cup \{2k+1\}$ for $p=3k+1$ where $k \geq 4$.

In a recent work~\cite{HavivLsf17}, the characterization of symmetric complete sum-free subsets of $\Z_p$ of largest size was extended to a linear range of sizes. It was shown there, using a structural result of~\cite{DLev08}, that for all sufficiently large primes $p$, every symmetric complete sum-free subset of $\Z_p$ of (even) size $s \in [0.318 p, \frac{p-1}{3}]$ is, up to an automorphism, of the form
\[ S_T = [p-2s+1,2s-1] \cup \pm (s+T),\]
where $T \subseteq [0,2t-1]$ is a set of $t$ integers for $t = (p-3s+1)/2$. While $S_T$ is symmetric for every set of integers $T$, sufficient and necessary conditions on $T$ for which $S_T$ is complete and sum-free were provided in~\cite{HavivLsf17}.
These conditions reduce the challenge of counting the symmetric complete sum-free subsets of $\Z_p$ of a given sufficiently large size to a question of counting certain sets of integers. As an application of our Theorem~\ref{thm:countingIntro}, we make a step towards this challenge and provide a tight estimation for the number of symmetric complete sum-free sets $S_T$ of size $s$ that satisfy $s \in S_T$ (equivalently, $0 \in T$).

\begin{theorem}\label{thm:S_T_with_sIntro}
For every sufficiently large prime $p$ and every even integer $s \in [0.318p,\frac{p-1}{3}]$, there are $\Theta(2^{(p-3s)/6})$ symmetric complete sum-free sets $S_T \subseteq \Z_p$ of size $s$ that satisfy $s \in S_T$.
\end{theorem}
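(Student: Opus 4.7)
The plan is to apply Theorem~\ref{thm:countingIntro} via the characterization of~\cite{HavivLsf17}. Since $T \subseteq [0,2t-1]$, with $t = (p-3s+1)/2$, uniquely determines $S_T = [p-2s+1,2s-1] \cup \pm(s+T)$, and~\cite{HavivLsf17} provides necessary and sufficient arithmetic conditions on $T$ for $S_T$ to be complete and sum-free, it suffices to count size-$t$ sets $T$ containing $0$ that meet these conditions.

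The first step is to extract the constraint $a+b+c \neq 2t-1$ for all $a,b,c \in T$, which follows from the sum-freeness of $S_T$: whenever $a+b+c = 2t-1$, one has $(s+a)+(s+b) \equiv -(s+c) \pmod p$, and $-(s+c) \in S_T$ by the symmetry of $S_T$. Setting $a=b=0$ forces $2t-1 \notin T$, and setting $a=0$ forces $T \cap ((2t-1)-T) = \emptyset$; combined with $|T|=t$, this makes $T$ a transversal of the fixed-point-free involution $x \mapsto (2t-1)-x$ on $[0,2t-1]$, and, since $0 \in T$, the transversal is completely determined by the subset $S := T \cap [1,t-1]$. Translating the remaining sum-freeness constraint together with the completeness conditions of~\cite{HavivLsf17} through this encoding should yield: $S$ is a sum-free subset of $[n]$ with $2n+1 \notin \sum S$, where $n = t-1$. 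Theorem~\ref{thm:countingIntro} then gives $\Theta(2^{n/3}) = \Theta(2^{(p-3s)/6})$ such $S$. The matching lower bound is realized already by taking $S$ to be any subset of $[\lceil 2t/3 \rceil, t-1]$: in this range, two-fold sums of elements of $S$ exceed $t-1$ and three-fold sums exceed $2t-1$, so both conditions hold vacuously, giving $\Omega(2^{t/3})$ valid $T$'s.

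The main obstacle is the translation step: verifying that the completeness conditions of~\cite{HavivLsf17}, combined with the constraint $a+b+c \neq 2t-1$ on $T$, are equivalent to the sum-free-plus-forbidden-sum conditions on $S$. The completeness direction is the more delicate one, since it requires tracking how the requirement that every element of $\Z_p \setminus S_T$ be representable as a pairwise sum in $S_T$ translates, through the transversal encoding, into an arithmetic constraint on $S$ alone; in particular, I expect the "sum-closedness of $[0,t-1] \setminus S$" that arises naively from $a+b+c \neq 2t-1$ (with two summands zero) to be upgraded by completeness into the cleaner sum-freeness condition on $S$ itself. That the target exponent $(p-3s)/6 = (2t-1)/6$ coincides with $n/3$ for $n = t-1$ up to the multiplicative constant absorbed by $\Theta$ is a strong indication that a clean bijection exists.
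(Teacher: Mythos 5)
There is a genuine gap, and it sits exactly at the step you flagged as the main obstacle: the translation from conditions on $T$ to conditions on $S = T\cap[1,t-1]$ does not produce a sum-free set with a forbidden sum. Work out what $a+b+c\neq 2t-1$ actually gives. With $a=0$ it makes $T$ a transversal of $x\mapsto (2t-1)-x$, as you say; but then for general $a,b\in T$ with $a+b\le 2t-1$, the element $(2t-1)-(a+b)$ is excluded from $T$, so by the transversal property $a+b\in T$. That is, $T$ (and hence $S$, as a subset of $[t-1]$) is \emph{closed under addition} --- the opposite of sum-free --- and the residual condition is $2t-1\notin 3S$. Completeness does not ``upgrade'' this to sum-freeness: in the paper (Claim~\ref{claim:T_special}), the completeness condition is shown to be \emph{automatic} once $0\in T$, $|T|=t$, and $2t-1\notin 3T$ hold, so it contributes no further constraint on $S$. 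Consequently the family you end up counting is $\calA_{t-1}$ (subsets of $[t-1]$ closed under addition with $2t-1\notin 3S$), which is not the family of Theorem~\ref{thm:countingIntro}, and there is no bijection between the two (e.g.\ $\{1\}$ is sum-free but not closed under addition; large initial segments are closed under addition but far from sum-free).

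The missing idea is the bridge the paper supplies in Lemma~\ref{lemma:closed-sumfree}: an \emph{injection} from $\calA_n$ into the family of sum-free sets $D\subseteq[n]$ with $2n+1\notin\sum D$, obtained by scanning the elements of $A$ from largest to smallest and deleting each one that is a sum of two smaller elements of the current set. One must then check (i) the image is sum-free, (ii) the image still avoids the forbidden sum --- this uses a nontrivial argument that any $k\ge 3$ integers summing to $2n+1$ can be regrouped into exactly three blocks each of sum at most $n$, so that $2n+1\in\sum f(A)$ would force $2n+1\in 3A$ --- and (iii) injectivity, via the smallest element on which two sets differ. Only then does Theorem~\ref{thm:countingIntro} yield the upper bound $O(2^{t/3})$. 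Your lower bound construction (subsets of $[\lceil 2t/3\rceil,t-1]$) is correct in substance --- such $S$ are vacuously closed under addition and avoid $2t-1$ as a triple sum, so the corresponding $T$ are $t$-special --- even though you justified it against the wrong target conditions. The rest of your setup (reduction via Theorem~\ref{thm:Final_Char}, the transversal encoding determined by $0\in T$, and the exponent bookkeeping) matches the paper's Lemma~\ref{lemma:special-closed}.
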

\noindent
It will be interesting to figure out if a similar upper bound holds for {\em all} symmetric complete sum-free sets $S_T$ of a sufficiently large size $s$ as well.

\paragraph{Outline.}
The rest of the paper is organized as follows.
In Section~\ref{sec:preliminaries}, we gather several definitions and results used throughout the paper.
In Section~\ref{sec:forbidden}, we study sum-free subsets of $[n]$ avoiding the forbidden sum $2n+1$.
Theorem~\ref{thm:extremalIntro} is proved in Section~\ref{sec:max}. For the counting question, we first prove in Section~\ref{sec:super} a weaker upper bound of $2^{(\frac{1}{3} + o(1)) \cdot n}$, and then in Section~\ref{sec:tight} prove our stability result Theorem~\ref{thm:stabilityIntro} and use it to prove the tight answer given in Theorem~\ref{thm:countingIntro}. In Section~\ref{sec:3A} we discuss the tightness of Theorem~\ref{thm:extremalIntro}, and in Section~\ref{sec:other_forbidden} we discuss extensions of Theorem~\ref{thm:countingIntro} to additional forbidden sums.
Finally, in Section~\ref{sec:Z_p}, we present our application to counting symmetric complete sum-free subsets of cyclic groups of prime order and prove Theorem~\ref{thm:S_T_with_sIntro}.

\section{Preliminaries}\label{sec:preliminaries}

\subsection{Additive Combinatorics}\label{sec:additive}

For an abelian additive group $G$ and two sets $A,B \subseteq G$ define $A+B = \{a+b \mid a \in A,~b \in B\}$. We use the notations $k A = \{ \sum_{i=1}^{k}{a_i} \mid a_1,\ldots,a_k \in A\}$ for $k \geq 0$, and $\sum A = \cup_{k \geq 0} (kA)$. The set $A$ is {\em sum-free} if $A \cap 2A = \emptyset$, {\em complete} if $G \setminus A \subseteq 2A$, and {\em symmetric} if $A = -A$.
The following simple claim is well known.
\begin{claim}\label{claim:Freiman}
For every nonempty sets $A,B \subseteq \Z$, $|A+B| \geq |A|+|B|-1$.
In particular, for every $k \geq 0$, $|k A| \geq k \cdot |A|-(k-1)$.
\end{claim}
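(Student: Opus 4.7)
The statement is the classical Plünnecke--type lower bound for sumsets of subsets of $\Z$, and the proof is short and combinatorial. My plan is first to dispose of the two-set inequality $|A+B| \geq |A|+|B|-1$ by exhibiting an explicit increasing chain inside $A+B$, and then to derive the iterated version by induction on $k$.

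For the first part, write $A = \{a_1 < a_2 < \cdots < a_s\}$ and $B = \{b_1 < b_2 < \cdots < b_t\}$ with $s = |A|$ and $t = |B|$. I would then display the chain
\[
a_1 + b_1 \;<\; a_1 + b_2 \;<\; \cdots \;<\; a_1 + b_t \;<\; a_2 + b_t \;<\; \cdots \;<\; a_s + b_t,
\]
which contains $s + t - 1$ terms, all strictly increasing because we are working in $\Z$. Each term lies in $A+B$, so $|A+B| \geq s+t-1 = |A|+|B|-1$. This handles the case of nonempty $A,B$ directly; no appeal to Freiman's deeper results is needed.

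For the iterated statement, I would induct on $k$. The base case $k=1$ gives $|1\cdot A| = |A| \geq |A| - 0$, and the (degenerate) case $k=0$ is $|0\cdot A| = |\{0\}| = 1 \geq -(-1)$, so both bases hold. For the inductive step, observe that $kA = (k-1)A + A$; applying the first part with the nonempty sets $(k-1)A$ and $A$, and then the inductive hypothesis,
\[
|kA| \;=\; |(k-1)A + A| \;\geq\; |(k-1)A| + |A| - 1 \;\geq\; \bigl((k-1)|A| - (k-2)\bigr) + |A| - 1 \;=\; k|A| - (k-1).
\]

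There is no real obstacle here: the only subtlety is being careful that the chain constructed for the first part is genuinely strictly increasing (which uses the total order on $\Z$, and would fail verbatim in a general abelian group like $\Z_p$), and that the nonemptiness hypothesis is preserved through the induction so that the two-set inequality can be reapplied at each step.
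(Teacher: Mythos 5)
Your proof is correct: the strictly increasing chain $a_1+b_1 < \cdots < a_1+b_t < a_2+b_t < \cdots < a_s+b_t$ gives the two-set bound, and the induction via $kA = (k-1)A + A$ cleanly yields the iterated version, with the $k=0$ and $k=1$ cases checked. The paper itself offers no proof (it states the claim as well known), and your argument is exactly the standard one it implicitly relies on.
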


The following classical theorem, proved in 1959 by Freiman~\cite{Freiman59}, shows that sets of integers with small sumset are highly structured.
\begin{theorem}[Freiman's $3k-4$ Theorem~\cite{Freiman59}]\label{thm:Freiman}
Every finite set $A \subseteq \Z$ satisfying $|A+A| \leq 3|A|-4$ is contained in an arithmetic progression of length at most $|A+A|-|A|+1$.
\end{theorem}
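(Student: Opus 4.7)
The plan is to induct on $k := |A|$ after a standard normalization. By translating $A$ we may assume $\min A = 0$, and by dividing all elements through by $\gcd(A)$ we may further assume $\gcd(A) = 1$; neither operation affects $|A|$ or $|A+A|$. Writing $A = \{0 = a_1 < a_2 < \cdots < a_k = n\}$, the desired containment reduces to the single inequality $n \leq |A+A| - k$, because the smallest AP covering $A$ after the normalization is $\{0, 1, \ldots, n\}$, of length $n+1 \leq |A+A| - k + 1$.

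The base case is the extremal situation $|A+A| = 2k-1$ (the minimum possible by Claim~\ref{claim:Freiman}). Here $A + A$ must coincide with $A \cup (n + A)$, whose two parts intersect only at $n$ and together have size $2k-1$; a short argument---examining where a sum such as $a_2 + a_{k-1}$ lands inside this decomposition---forces $A$ itself to be an AP, and with $\gcd(A) = 1$ the common difference must be $1$, giving $n = k-1$ as required. For the inductive step with $|A+A| > 2k-1$, the idea is to strip off the top element: set $A' := A \setminus \{n\}$ and $n' := a_{k-1}$. The sums in $A + A$ that do not already belong to $A' + A'$ come from $\{a_i + n : 1 \leq i \leq k\}$, and every such sum with $a_i + n > 2 n'$ is automatically new. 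A careful count of these new sums (the count grows with the top gap $n - n'$), combined with the inductive hypothesis applied to a suitable rescaling of $A'$, should yield the target inequality $n \leq |A+A| - k$. The symmetric case, when removing the top is unfavorable, is handled by instead stripping the smallest element, equivalently by applying the reflection $A \mapsto n - A$.

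The main obstacle is that removing $a_k$ can destroy the gcd normalization: $\gcd(A')$ may exceed $1$, so the inductive hypothesis must be applied to $A'/\gcd(A')$ and then translated back, losing a factor of $\gcd(A')$ in the scale of the original $n$. This loss is balanced by the fact that a large $\gcd(A')$ forces $A'$ to occupy very few residues modulo $\gcd(A')$, which, together with the presence of $a_k$ outside these residues (forced by $\gcd(A) = 1$), produces a cluster of extra new sums of the form $a_i + n$ landing at unexpected residues in $A+A$. Tracking this interplay precisely---so that the extra new sums exactly compensate for the rescaling loss and recover $|A+A| \geq n + k$---is the technically delicate heart of the argument and the place where the tight constant $3k - 4$ is genuinely used.
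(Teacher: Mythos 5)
First, note that the paper does not prove Theorem~\ref{thm:Freiman} at all: it is quoted as a classical black-box result from Freiman's 1959 work, so there is no internal proof to compare yours against; your attempt has to be judged on its own.

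Your normalization and reduction are correct: after translating so that $\min A=0$ and dividing by $\gcd(A)$, the shortest arithmetic progression containing $A$ is $\{0,1,\ldots,n\}$ with $n=\max A$, so the claim is exactly $|A+A|\geq n+k$ where $k=|A|$. The extremal case $|A+A|=2k-1$ forcing $A$ to be an AP is also standard and fine. The genuine gap is the inductive step, which is asserted rather than carried out, and as outlined it does not close. To invoke the inductive hypothesis on $A'=A\setminus\{n\}$ you need $|A'+A'|\leq 3(k-1)-4=3k-7$, i.e.\ you need removing $n$ to delete at least three elements from the sumset; but only two new sums are guaranteed in general ($2n$ and $n+a_{k-1}$, the only ones exceeding $2a_{k-1}$), so there is an uncovered case in which neither stripping the top nor (by reflection) the bottom element reduces the doubling hypothesis enough. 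Even when the hypothesis does apply, it bounds $a_{k-1}$, and converting that into the required bound on $n=a_k$ forces you to charge the top gap $n-a_{k-1}$ against new sums of the form $a_i+n$ landing above $2a_{k-1}$ --- precisely the count you defer with ``should yield'' and ``technically delicate heart.'' This is not a routine verification: it is where the theorem's content lives, and known complete proofs handle it either by a careful hole-counting induction on the diameter (Freiman, Steinig, Nathanson) or by passing to $\Z_n$ and applying Kneser's theorem (Lev--Smeliansky). Incidentally, the issue you flag as the main obstacle, $\gcd(A')=d>1$, is actually the easy case: then $A'+A'\subseteq d\Z$ while $n+A'$ lies in a different residue class mod $d$, giving $|A+A|\geq(2k-3)+(k-1)=3k-4$ with equality forcing $A'$ to be the AP $\{0,d,\ldots,(k-2)d\}$ and $2n\in A'+A'$, which contradicts $n>(k-2)d$; no rescaling bookkeeping is needed. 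As it stands the proposal is a plausible proof plan with the decisive counting argument missing.
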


We also need a recent result due to Tran~\cite{Tran17} on the structure of large sum-free subsets of $[n]$ (see also~\cite{DeshouillersFST99}).
\begin{theorem}[\cite{Tran17}]\label{thm:Tran}
There exists a constant $c>0$ such that for every integer $n$ and real $\eta \in [ \frac{2}{n} ,c]$ the following holds.
Every sum-free set $A \subseteq [n]$ of size $|A| \geq (\frac{2}{5}-\eta) \cdot n$ satisfies one of the following alternatives.
\begin{enumerate}
  \item\label{itm:1} All the elements of $A$ are congruent to $1$ or $4$ modulo $5$.
  \item\label{itm:2} All the elements of $A$ are congruent to $2$ or $3$ modulo $5$.
  \item\label{itm:3} All the elements of $A$ are odd.
  \item\label{itm:4} $\min(A) \geq |A|$.
  \item\label{itm:5} $A \subseteq [(\frac{1}{5} - 200\eta^{1/2})n, (\frac{2}{5} + 200\eta^{1/2})n] \cup [(\frac{4}{5}-200\eta^{1/2})n,n]$.
\end{enumerate}
\end{theorem}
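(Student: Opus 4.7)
The plan is to extend the approach of Deshouillers, Freiman, S\'{o}s, and Temkin~\cite{DeshouillersFST99} to cover the density range down to $\frac{2}{5}-\eta$, by combining the sum-freeness constraint $A \cap (A+A) = \emptyset$ with Freiman-type structural theorems and a careful case analysis modulo small integers. The starting observation is that sum-freeness gives $A$ and $A+A$ disjoint in $[1,2n]$, so $|A+A| \leq 2n - |A|$, which together with Claim~\ref{claim:Freiman} pins $|A+A|$ in a narrow window when $|A| \approx \frac{2}{5}n$.

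First I would branch on the size of $A+A$. If $|A+A| \leq 3|A|-4$, Theorem~\ref{thm:Freiman} forces $A$ inside a short arithmetic progression $P$ with common difference $d$ and length close to $|A|$. The sum-freeness and density then severely constrain $d$: only $d \in \{1,2,5\}$ can yield density close to $\frac{2}{5}$, and the specific sum-free residue patterns give precisely alternatives (\ref{itm:1}), (\ref{itm:2}), (\ref{itm:3}), or (\ref{itm:4}) depending on the position of $P$ in $[n]$. If $|A+A| > 3|A|-4$, I would pass to a slightly smaller subset by removing few elements until Freiman's theorem applies, then argue that most of $A$ lies in such a $P$, and use sum-freeness to show the remaining "stray" elements are concentrated in the intervals described in alternative (\ref{itm:5}).

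The second phase would handle the quantitative transition to alternative (\ref{itm:5}): if $A$ is close to but not exactly of a modular form, show that the portion of $A$ violating the modular condition, combined with sum-freeness, forces the whole set into the two-interval pattern $[(\frac{1}{5}-O(\eta^{1/2}))n,(\frac{2}{5}+O(\eta^{1/2}))n] \cup [(\frac{4}{5}-O(\eta^{1/2}))n,n]$. This is the configuration where $A$ and $A+A$ almost partition $[1,2n]$, so the sumset structure forces a specific additive geometry. The $\eta^{1/2}$ loss is the typical error arising from converting an additive energy bound (obtained from sum-freeness) into an $L^\infty$ localization bound.

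The hard part will be the quantitative tracking of error terms across cases and ruling out hybrid configurations in which $A$ partially aligns with a modular pattern yet maintains density above $\frac{2}{5}-\eta$. In particular, excluding sets that mix odd residues with a central interval, or that straddle two different mod~5 patterns, requires a delicate combined use of Freiman's theorem on substantial subsets of $A$ together with sum-freeness applied to the complementary elements, which is the crux of Tran's refinement over the earlier $\frac{2}{5}$-density theorem.
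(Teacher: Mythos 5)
First, note that the paper does not prove this statement at all: Theorem~\ref{thm:Tran} is imported verbatim from Tran~\cite{Tran17} (building on~\cite{DeshouillersFST99}) and is used as a black box in Section~\ref{sec:3A}. So there is no internal proof to compare your sketch against; it has to stand on its own as a proof of Tran's theorem, and as such it contains a genuine structural gap.

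The gap is in your central dichotomy. Take the extremal example for alternative~\eqref{itm:1}, namely $A=\{x\in[n] : x\equiv 1,4 \pmod 5\}$ with $|A|=\frac{2n}{5}$. Its sumset consists of the residues $0,2,3$ modulo $5$ in $[2,2n]$, so $|A+A|\approx \frac{6n}{5}=3|A|$, which lies \emph{outside} the hypothesis $|A+A|\le 3|A|-4$ of Theorem~\ref{thm:Freiman}; and indeed $A$ is not contained in any arithmetic progression of length close to $|A|$ (a progression of difference $d$ and length $\frac{2n}{5}$ fits inside $[n]$ only for $d\le 2$, so a rank-one progression can never produce the mod-$5$ patterns --- $A$ is a union of two progressions of difference $5$, i.e., a rank-two object). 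Your fallback for the regime $|A+A|>3|A|-4$, ``remove a few elements until Freiman's theorem applies,'' does not work: deleting $o(|A|)$ elements cannot in general push a doubling constant equal to $3$ below $3-o(1)$, and even if it could, the resulting progression would have difference $1$ or $2$ and would miss the genuinely different mod-$5$ structures. This is precisely why the literature here does not rest on the $3k-4$ theorem: one needs either Freiman-type structure theory for doubling slightly above $3$ (coverings by rank-two generalized progressions) or, as in~\cite{DeshouillersFST99,Tran17}, a bespoke analysis that treats the five candidate structures separately and rules out hybrids. Your sketch names that last step as ``the hard part'' but supplies no mechanism for it, and the quantitative claims are asserted rather than derived: the ``narrow window'' for $|A+A|$ is actually the full range $[2|A|-1,\,4|A|]$, and nothing in the proposal produces the specific $200\eta^{1/2}$ error in alternative~\eqref{itm:5}. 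As written, the proposal does not constitute a proof.
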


\subsection{Green's Arithmetic Removal Lemma}\label{sec:Green}

In 2005, Green~\cite{Green05} proved an arithmetic removal lemma for abelian groups, motivated by well-known removal lemmas in graph theory (see~\cite{KSV12} for an alternative proof and an extension).
Among other applications, he used it to prove that every `almost' sum-free subset of $[n]$ can be made sum-free by removing relatively few elements. We state below the arithmetic removal lemma of~\cite{Green05} and a variant of its application to sum-freeness, a proof of which is included for completeness.

\begin{theorem}[\cite{Green05}]\label{thm:Green}
For every $\eps >0$ and an integer $k \geq 3$ there exists $\delta = \delta_k(\eps) >0$, such that for every integer $N$ the following holds.
Let $A_1, \ldots, A_k$ be subsets of an abelian additive group $G$ of size $|G|=N$ such that the number of zero-sum $k$-tuples in $\prod_{i=1}^{k}{A_i}$ (that is, $k$-tuples $(x_1,\ldots,x_k) \in \prod_{i=1}^{k}{A_i}$ satisfying $\sum_{i=1}^{k}{x_i} = 0$) is at most $\delta \cdot N^{k-1}$. Then, there exist subsets $A'_i \subseteq A_i$, $i \in [k]$, with $|A'_i| \leq \eps \cdot N$, for which there are no zero-sum $k$-tuples in $\prod_{i=1}^{k}{(A_i \setminus A'_i)}$.
\end{theorem}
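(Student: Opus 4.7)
The plan is to deduce Theorem~\ref{thm:Green} from the triangle (and more generally, hypergraph) removal lemma, rather than via the original Fourier-analytic approach of~\cite{Green05}. For concreteness I describe the case $k=3$; the case $k \geq 4$ runs analogously with a $k$-partite $(k-1)$-uniform hypergraph in place of the auxiliary graph below.

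First, I would encode zero-sum triples as triangles in an auxiliary tripartite graph $H$ on vertex set $V_1 \sqcup V_2 \sqcup V_3$, where each $V_i$ is a copy of $G$. Place an edge between $x \in V_1$ and $y \in V_2$ whenever $y-x \in A_1$, between $y \in V_2$ and $z \in V_3$ whenever $z-y \in A_2$, and between $x \in V_1$ and $z \in V_3$ whenever $x-z \in A_3$. Then $(x,y,z)$ is a triangle of $H$ iff $(y-x,\, z-y,\, x-z) \in A_1 \times A_2 \times A_3$ is a zero-sum triple, and because $H$ is invariant under the diagonal translation action of $G$, each zero-sum triple contributes exactly $N$ triangles. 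Hence $H$ has $3N$ vertices and at most $\delta N^3$ triangles.

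Next, I would apply the triangle removal lemma to $H$: choosing $\eps'$ as a sufficiently small function of $\eps$, the hypothesis on $\delta$ yields an edge set $E' \subseteq E(H)$ of size at most $\eps' \cdot (3N)^2$ whose removal makes $H$ triangle-free. The remaining step is to convert this edge deletion into a deletion of few elements from each $A_i$. The key structural feature of $H$ is that its edges between $V_1$ and $V_2$ split canonically into $|A_1|$ translation classes, one per $a_1 \in A_1$, namely $\{(x, x+a_1) : x \in G\}$, and the analogous statement holds on the other two sides. One then constructs, from $E'$, a replacement $\widetilde{E}$ that is a union of whole translation classes, still destroys every triangle in $H$, and has a comparable size; the translation classes appearing in $\widetilde{E}$ then define the desired sets $A'_i \subseteq A_i$ with $|A'_i| \leq \eps N$.

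The main obstacle is precisely this symmetrization: the naive $G$-orbit $\bigcup_{t \in G} t \cdot E'$ is triangle-killing but may be of size up to $N|E'|$, while the ``majority vote'' set $\{e : |\{t : t \cdot e \in E'\}| \geq N/2\}$ stays small but need not destroy all triangles. The standard way around this, as in the combinatorial proof of Kr\'al', Serra, and Vena, is to apply the removal lemma instead to an auxiliary graph whose vertex set is indexed directly by $A_1 \sqcup A_2 \sqcup A_3$, so that vertex (rather than edge) removal in that graph coincides with element removal from the $A_i$'s. The quantitative dependence $\delta_k(\eps)$ obtained is of tower type, inherited from the triangle (respectively hypergraph) removal lemma, which is consistent with the purely qualitative existence statement of the theorem.
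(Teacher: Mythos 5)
The paper does not prove this statement at all: it is quoted from Green's paper \cite{Green05}, and the combinatorial route you are sketching is exactly the one the paper points to when it cites \cite{KSV12} ``for an alternative proof.'' So your proposal must stand on its own, and while its architecture is the right one, it has a genuine gap at the one step that requires an idea. Your construction of the tripartite graph $H$ is correct, as is the count (each zero-sum triple yields exactly $N$ pairwise edge-disjoint triangles, so $H$ has $3N$ vertices and at most $\delta N^3$ triangles), and the triangle removal lemma then supplies a set $E'$ of at most $\eps'(3N)^2$ edges whose removal makes $H$ triangle-free. The problem is what you do next. You frame the edge-to-element conversion as a dilemma (the full $G$-orbit of $E'$ is too large; a majority-vote symmetrization ``need not destroy all triangles'') and then defer to an undefined ``vertex-indexed'' auxiliary graph. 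That is not how Kr\'al', Serra, and Vena resolve it -- their graph is the one you already built -- and you never verify that a vertex-removal version of the removal lemma is available or that it would produce the $A'_i$. As written, the proof is missing its key step.

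The missing step is a short counting argument, and it works precisely because you do \emph{not} need the deleted elements to destroy every triangle of $H$ -- only to leave no zero-sum tuple in $\prod_i (A_i\setminus A'_i)$. Define $A'_1$ to be the set of $a_1\in A_1$ whose translation class $\{(x,x+a_1):x\in G\}$ contains at least $N/6$ edges of $E'$; since distinct classes are disjoint, $|A'_1|\cdot N/6\le |E'|\le 9\eps' N^2$, so $|A'_1|\le 54\eps' N$, and similarly for $A'_2,A'_3$. Now suppose $(a_1,a_2,a_3)$ were a zero-sum triple with $a_i\notin A'_i$ for all $i$. Its $N$ translated triangles are pairwise edge-disjoint, and each must contain an edge of $E'$ because $H-E'$ is triangle-free. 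But by edge-disjointness, the number of these triangles whose $V_iV_{i+1}$-edge lies in $E'$ is at most the number of $E'$-edges in the class of $a_i$, which is less than $N/6$; summing over the three sides gives fewer than $N/2<N$ triangles meeting $E'$, a contradiction. Taking $\eps'=\eps/54$ gives $|A'_i|\le\eps N$. The same threshold argument with $N/(2k)$ in place of $N/6$ handles $k\ge 4$ once the hypergraph removal lemma replaces the triangle removal lemma. With this repair your proof is complete and is genuinely different from Green's original Fourier-analytic argument, at the usual cost of tower-type (or worse, for $k\ge4$) dependence of $\delta$ on $\eps$ -- which is harmless here since the paper uses the theorem only qualitatively.
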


\begin{corollary}\label{cor:Green_sets}
For every $\eps >0$ there exists $\delta' = \delta'(\eps) >0$, such that for every sufficiently large integer $n$ the following holds.
Let $\calF$ denote the collection of all sets $\{x,y,z\} \subseteq [n]$ of distinct $x,y,z$ satisfying $x+y=z$.
If a set $A \subseteq [n]$ contains at most $\delta' \cdot n^{2}$ sets from $\calF$
then there exists a subset $A' \subseteq A$ of size $|A'| \leq \eps \cdot n$ for which $A \setminus A'$ is sum-free.
\end{corollary}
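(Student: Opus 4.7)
The plan is to apply Theorem~\ref{thm:Green} with $k = 3$ to the cyclic group $G = \Z_N$ for $N = 2n + 1$. I would take $A_1 = A_2 = A$ (viewed naturally in $\Z_N$) and $A_3 = -A = \{N - a : a \in A\}$. A zero-sum triple $(x, y, w) \in A_1 \times A_2 \times A_3$ satisfies $x + y + w \equiv 0 \pmod{N}$, and writing $w = N - z$ with $z \in A$ this becomes $x + y \equiv z \pmod{N}$. Because $x + y \in [2, 2n]$ and $z \in [1, n]$ while $N = 2n + 1$, there is no wrap-around, so the congruence forces the integer equality $x + y = z$. Hence zero-sum triples in $A_1 \times A_2 \times A_3$ correspond bijectively to integer triples $(x, y, z) \in A^3$ with $x + y = z$.

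I would then bound the number of such integer triples in terms of $|\calF \cap \binom{A}{3}|$. Every set $\{x, y, z\} \in \calF \cap \binom{A}{3}$ contributes exactly two ordered triples (swapping the two summands, with $z$ determined as the sum), accounting for at most $2\delta' n^2$ triples. Any remaining ordered triple must have a repeated coordinate; since $x = z$ or $y = z$ would force a summand equal to $0 \notin [n]$, the only surviving case is $x = y$ with $z = 2x$, contributing at most $|A| \leq n$ triples. In total, the number of zero-sum triples is at most $2\delta' n^2 + n$.

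Choosing $\delta' = \delta_3(\eps/6)/3$, where $\delta_3$ is the function supplied by Theorem~\ref{thm:Green}, and taking $n$ sufficiently large, this count is at most $\delta_3(\eps/6) \cdot N^2$. Theorem~\ref{thm:Green} then yields subsets $A_i' \subseteq A_i$ with $|A_i'| \leq (\eps/6) \cdot N$ each such that $\prod_{i=1}^{3}(A_i \setminus A_i')$ contains no zero-sum triple. Setting $A' = A_1' \cup A_2' \cup (-A_3') \subseteq A$, we get $|A'| \leq 3 \cdot (\eps/6) \cdot (2n + 1) \leq \eps n$ for $n$ large. If some $x, y, z \in A \setminus A'$ satisfied $x + y = z$, then $(x, y, N - z)$ would be a zero-sum triple in $\prod_{i=1}^{3}(A_i \setminus A_i')$, a contradiction; hence $A \setminus A'$ is sum-free.

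No step here looks genuinely difficult: the corollary is essentially a direct translation of Theorem~\ref{thm:Green} into the integer setting. The only real care needed is the bookkeeping that matches the number of $\calF$-triples (unordered, distinct) to the number of zero-sum triples (ordered, possibly repeated) in $\Z_N$, and the final accounting that turns the three removed sets $A_i'$ inside $\Z_N$ back into a single removed subset $A' \subseteq A$ of size at most $\eps n$.
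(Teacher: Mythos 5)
Your proposal is correct and follows essentially the same route as the paper, which applies Green's removal lemma with $k=3$, $A_1=A_2=A$, $A_3=-A$ inside the cyclic group $\Z_{2n}$ (you use $\Z_{2n+1}$, an immaterial difference) and checks that no wrap-around occurs. The only nit is that $3\cdot\frac{\eps}{6}\cdot(2n+1)=\eps n+\frac{\eps}{2}$ slightly exceeds $\eps n$; this is repaired by invoking the theorem with $\eps/7$ in place of $\eps/6$ (the paper avoids the issue since $|G|=2n$ gives exactly $\eps n$).
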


\begin{proof}
For $\eps >0$ let $\delta = \delta_3 (\frac{\eps}{6})$, where $\delta_3$ is as in Theorem~\ref{thm:Green}, and define $\delta' = \frac{\delta}{2}$.
We apply Theorem~\ref{thm:Green} with the group $G = \Z_{2n}$ and $k=3$.
Identify a set $A \subseteq [n]$ as a subset of $G$ in the natural way, and consider the sets $A_1 = A_2 = A$ and $A_3 = -A$.
Observe that for $x,y,z \in [n]$, the equality $x+y=z$ over $G$ is equivalent to the same equality over the integers.
Assuming that $A$ contains at most $\delta' \cdot n^{2}$ sets from $\calF$, the number of ordered triples $(x,y,z) \in A^3$ such that $x+y=z$ is at most
$3! \cdot \delta' n^2 + 3n \leq 8 \delta' n^2 = \delta \cdot |G|^2$, where the inequality holds assuming that $n$ is sufficiently large.
By Theorem~\ref{thm:Green}, there exists a set $A' \subseteq A$ of size $|A'| \leq 3 \cdot \frac{\eps}{6} \cdot |G| = \eps \cdot n$ such that $A \setminus A'$ is sum-free over the group $G$, thus over the integers as well.
\end{proof}

As another application of Theorem~\ref{thm:Green}, we show that for every fixed $k$, if a subset of $[n]$ includes a relatively few $k$-subsets with a given sum then it has a large subset including no $k$-tuples with this sum at all.

\begin{corollary}\label{cor:3A4A_sets}
For every $\eps >0$ and an integer $k \geq 3$ there exists $\delta'' = \delta''_k(\eps) >0$, such that for every sufficiently large integer $n$ the following holds.
For an integer $\ell$, let $\calF$ denote the collection of all sets $\{x_1,\ldots,x_k\} \subseteq [n]$ of distinct $x_1,\ldots,x_k$ satisfying $\sum_{i=1}^{k}{x_i} = \ell$.
If a set $A \subseteq [n]$ contains at most $\delta'' \cdot n^{k-1}$ sets from $\calF$
then there exists a subset $A' \subseteq A$ of size $|A'| \leq \eps \cdot n$ for which $\ell \notin k(A \setminus A')$.
\end{corollary}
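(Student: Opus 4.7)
\medskip
\noindent\textbf{Proof plan.}
The plan is to mimic the proof of Corollary~\ref{cor:Green_sets}, replacing the sum-free equation $x+y=z$ by the $k$-variable equation $\sum_{i=1}^{k} x_i = \ell$.
First I would dispose of the trivial ranges: if $\ell < k$ or $\ell > k n$ then $\ell \notin k A$ for any $A\subseteq[n]$ and we may take $A'=\emptyset$, so assume $k \leq \ell \leq k n$. I would work in the abelian group $G = \Z_N$ with $N$ chosen so that the integer equation $\sum x_i = \ell$ for $x_1,\ldots,x_k \in [n]$ is equivalent to the corresponding equation in $G$ — for instance, $N = k n + 1$ is enough. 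Identify $A \subseteq [n]$ with its image in $G$ in the natural way, and apply Theorem~\ref{thm:Green} to the $k$-tuple of subsets
\[
A_1 = A_2 = \cdots = A_{k-1} = A, \qquad A_k = A - \ell \pmod{N},
\]
so that zero-sum $k$-tuples in $\prod_{i=1}^{k} A_i$ correspond bijectively, via $(x_1,\ldots,x_{k-1},y) \mapsto (x_1,\ldots,x_{k-1},y+\ell)$, to $k$-tuples $(x_1,\ldots,x_k) \in A^k$ with $\sum_{i=1}^{k} x_i = \ell$ as integers.

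Next I would pass from the hypothesis on \emph{unordered sets of distinct elements} in $\calF$ to a bound on \emph{ordered $k$-tuples}. The number of ordered tuples $(x_1,\ldots,x_k) \in A^k$ of pairwise distinct entries that sum to $\ell$ is at most $k!\cdot \delta'' n^{k-1}$. The number of ordered $k$-tuples in $A^k$ summing to $\ell$ that have at least one repeated entry is $O_k(n^{k-2})$, obtained by fixing a coincident pair of coordinates (which kills one degree of freedom) and using the sum constraint to kill another. Hence the total number of zero-sum $k$-tuples in $\prod A_i$ is at most $k!\cdot \delta'' n^{k-1} + O_k(n^{k-2})$, which for $n$ sufficiently large is at most $\delta \cdot N^{k-1}$, where $\delta := \delta_k(\eps/(k N/n))$ is supplied by Theorem~\ref{thm:Green} applied with removal tolerance $\eps/(k N/n)$ (note that $N/n$ is a constant $O_k(1)$, so the denominator is bounded in terms of $k$ and $\eps$ alone). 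Choosing $\delta''$ small enough in terms of this $\delta$ validates the hypothesis of Theorem~\ref{thm:Green}.

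Theorem~\ref{thm:Green} then produces subsets $A_i' \subseteq A_i$ with $|A_i'| \leq (\eps/(k N/n)) \cdot N = (\eps/k)\cdot n$ such that no zero-sum $k$-tuple lies in $\prod_{i=1}^{k}(A_i \setminus A_i')$. Translate $A_k'$ back by setting $A' := A_1' \cup \cdots \cup A_{k-1}' \cup \bigl((A_k' + \ell) \cap A\bigr)$, so that $A' \subseteq A$ and $|A'| \leq k \cdot (\eps/k) \cdot n = \eps n$. For any $y_1,\ldots,y_k \in A \setminus A'$, the tuple $(y_1,\ldots,y_{k-1},y_k - \ell)$ lies in $\prod_{i=1}^{k}(A_i \setminus A_i')$ and its sum modulo $N$ is $\sum_{i=1}^{k} y_i - \ell$. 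By the choice of $N$, this vanishes in $\Z_N$ iff $\sum y_i = \ell$ as integers, so the nonexistence of zero-sum tuples gives $\ell \notin k(A \setminus A')$, as required.

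The proof is essentially mechanical once the group $G$ and the shift $A_k = A - \ell$ are set up correctly. The only step that demands any care is the ordered-vs-unordered conversion: I need to confirm that tuples with repeated entries contribute only a lower-order term, so that the hypothesis on $\calF$ (which counts only sets of \emph{distinct} elements) still implies the $\delta N^{k-1}$ bound required by Theorem~\ref{thm:Green}. This is straightforward but is the one place where having $k$ fixed (so that $O_k(n^{k-2})$ is genuinely lower order than $n^{k-1}$) is used.
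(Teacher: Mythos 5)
Your proposal is correct and follows essentially the same route as the paper: pass to a cyclic group of order roughly $kn$, take $A_1=\cdots=A_{k-1}=A$ and $A_k=A-\ell$, convert the hypothesis on distinct unordered $k$-sets into a bound of $k!\,\delta'' n^{k-1}+O_k(n^{k-2})$ on ordered zero-sum tuples, and invoke Theorem~\ref{thm:Green}. The only cosmetic difference is that the paper works in $\Z_{kn}$ (so the removal tolerance $\eps/k^2$ is exactly constant, sidestepping your $N/n$ remark) and is terser about assembling the single set $A'$ from the $A_i'$, a step you spell out correctly.
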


\begin{proof}
For $\eps >0$ and $k \geq 3$ let $\delta'' = \delta_k (\frac{\eps}{k^2})$, where $\delta_k$ is as in Theorem~\ref{thm:Green}.
It can be assumed that $\ell \in [k,kn]$, as otherwise $\ell \notin kA$ for every $A \subseteq [n]$.
We apply Theorem~\ref{thm:Green} with the group $G = \Z_{kn}$ and the integer $k$.
Identify a set $A \subseteq [n]$ as a subset of $G$ in the natural way, and consider the sets $A_1 = \cdots = A_{k-1} = A$ and $A_k = A-\ell$.
Observe that for $x_1,\ldots,x_k \in [n]$, the equality $\sum_{i=1}^{k-1}{x_i}+(x_k-\ell)=0$ can be written as $\sum_{i=1}^{k}{x_i}=\ell$, and that it holds over $G$ if and only if it holds over the integers.
Assuming that $A$ contains at most $\delta'' \cdot n^{k-1}$ sets from $\calF$, the number of ordered $k$-tuples $(x_1,\ldots,x_k) \in A^k$ such that $\sum_{i=1}^{k}{x_i}=\ell$ is at most
\[k! \cdot \delta'' \cdot n^{k-1} + {\binom k 2}n^{k-2} \leq \frac{3}{2} \cdot k! \cdot \delta'' \cdot n^{k-1} = \frac{3k!}{2k^{k-1}} \cdot \delta'' \cdot |G|^{k-1} \leq \delta'' \cdot |G|^{k-1},\]
where the first inequality holds assuming that $n$ is sufficiently large.
By Theorem~\ref{thm:Green}, there exists a set $A' \subseteq A$ of size $|A'| \leq k \cdot \frac{\eps}{k^2} \cdot |G| = \eps \cdot n$ such that $\ell \notin k(A \setminus A')$ over the group $G$, thus over the integers as well.
\end{proof}

\subsection{Independent Sets in Hypergraphs}\label{sec:independent}

Structural results on independent sets in hypergraphs were found in recent years as a strong tool in proving extremal, structural, and counting results in combinatorics (see, e.g.,~\cite{SaxtonT2015,BaloghMS15,Samotij15}).
We state below a theorem of Balogh, Morris, and Samotij~\cite{BaloghMS15} that provides a general framework to derive counting statements from supersaturation and stability results.

We start with a few notations. For a hypergraph $\calH$, denote by $V(\calH)$ the set of its vertices and by $E(\calH) \subseteq P(V(\calH))$ the set of its hyperedges. Let $v(\calH) = |V(\calH)|$ and $e(\calH) = |E(\calH)|$. The hypergraph $\calH$ is $k$-uniform if $|e|=k$ for every $e \in E(\calH)$.
For a set $A \subseteq V(\calH)$ let $\calH[A]$ be the subhypergraph of $\calH$ induced by $A$. An independent set in $\calH$ is a subset of $V(\calH)$ containing no hyperedge of $\calH$. Let $\calI(\calH)$ denote the family of independent sets in $\calH$, and for an integer $m$, let $\calI(\calH,m)$ denote the family of independent sets in $\calH$ of size $m$.
For a set $T \subseteq V(\calH)$ define $\deg_\calH(T) = |\{e \in E(\calH) \mid T \subseteq e\}|$, and for an integer $\ell$, let
\[\Delta_\ell(\calH) = \max \{ \deg_{\calH}(T) \mid T \subseteq V(\calH)~\mbox{ and }~|T|=\ell \}.\]
We also need the following definitions of density and stability of hypergraphs used in~\cite{BaloghMS15} (see also~\cite{AlonBMS14,Schacht16}).

\begin{definition}\label{def:dense_stable}
Let $\calH = (\calH_n)_{n \in \N}$ be a sequence of hypergraphs, and let $\alpha \in (0,1)$ be a real number.
\begin{enumerate}
  \item\label{itm:dense_def} We say that $\calH$ is $\alpha$-{\em dense} if for every $\eps >0$ there exist $\delta>0$ and $n_0$ such that the following holds. For every $n \geq n_0$ and a set $A \subseteq V(\calH_n)$ with $|A| \geq (\alpha+\eps) \cdot v(\calH_n)$, $e(\calH_n[A]) \geq \delta \cdot e(\calH_n)$.
  \item\label{itm:stable_def} For a sequence $\calB$ of sets $\calB_n \subseteq P(V(\calH_n))$, we say that $\calH$ is $(\alpha,\calB)$-{\em stable} if for every $\eps>0$ there exist $\delta >0$ and $n_0$ such that the following holds. For every $n \geq n_0$ and a set $A \subseteq V(\calH_n)$ with $|A| \geq (\alpha-\delta) \cdot v(\calH_n)$, it holds that $e(\calH_n[A]) \geq \delta \cdot e(\calH_n)$ or $|A \setminus B| \leq \eps \cdot v(\calH_n)$ for some $B \in \calB_n$.
\end{enumerate}
\end{definition}

\begin{theorem}[Theorems~5.4 and~6.3 in~\cite{BaloghMS15}]\label{thm:BMS_items}
Let $\calH = (\calH_n)_{n \in \N}$ be a sequence of $k$-uniform hypergraphs for an integer $k$, and let $\alpha \in (0,1)$ and $c>0$. Let $p \in [0,1]^\N$ be a sequence of real numbers satisfying that for every sufficiently large integer $n$ and every $\ell \in [k]$,
\[ \Delta_\ell(\calH_n) \leq c \cdot p_n^{\ell-1} \cdot \frac{e(\calH_n)}{v(\calH_n)}.\]
\begin{enumerate}
  \item\label{itm:BMS_1} If $\calH$ is $\alpha$-dense then for every $\gamma >0$ there exists $C>0$ such that for every sufficiently large $n$ and every $m \geq C p_n v(\calH_n)$,
\[ |\calI(\calH_n,m)| \leq {\binom {(\alpha+\gamma) \cdot v(\calH_n)} {m}}.\]
  \item\label{itm:BMS_2} Let $\calB$ be a sequence of sets $\calB_n \subseteq P(V(\calH_n))$. If $\calH$ is $(\alpha,\calB)$-stable then for every $\gamma >0$ there exist $\beta >0$ and $C>0$ such that for every sufficiently large $n$ and every $m \geq C p_n v(\calH_n)$ there are at most
\[ (1-\beta)^m \cdot {\binom {\alpha \cdot v(\calH_n)} {m}}\]
independent sets $I \in \calI(\calH_n,m)$ such that $|I \setminus B| \geq \gamma m$ for every $B \in \calB_n$.
\end{enumerate}
\end{theorem}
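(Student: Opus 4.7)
The plan is to deduce both parts from the hypergraph container lemma of Saxton--Thomason and Balogh--Morris--Samotij, which, under the hypothesized degree bound $\Delta_\ell(\calH_n) \leq c \cdot p_n^{\ell-1} \cdot e(\calH_n)/v(\calH_n)$ for every $\ell \in [k]$, produces for every $\eta > 0$ a family $\calC_n$ of \emph{containers} $C \subseteq V(\calH_n)$ such that: (a) every independent set of $\calH_n$ is contained in some $C \in \calC_n$; (b) each container is almost independent, $e(\calH_n[C]) \leq \eta \cdot e(\calH_n)$; and (c) $\log |\calC_n| \leq T \cdot p_n v(\calH_n) \log(1/p_n)$ for some $T = T(\eta,k,c)$. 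This lemma will be invoked as a black box; the rest of the argument is essentially accounting, matching the parameter $\eta$ to the density/stability thresholds and showing that the combinatorial cost of enumerating containers is absorbed once $m \geq C p_n v(\calH_n)$.

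For part~\ref{itm:BMS_1}, I would invoke the $\alpha$-density hypothesis with $\eps = \gamma/2$, obtaining $\delta > 0$ such that every $A \subseteq V(\calH_n)$ of size at least $(\alpha + \gamma/2) v(\calH_n)$ satisfies $e(\calH_n[A]) \geq \delta \cdot e(\calH_n)$. Choosing $\eta < \delta$ in the container lemma forces every container to have size at most $(\alpha + \gamma/2) v(\calH_n)$, hence
\[
 |\calI(\calH_n,m)| \;\leq\; |\calC_n| \cdot \binom{(\alpha+\gamma/2)\, v(\calH_n)}{m}.
\]
Since $\binom{(\alpha+\gamma) v(\calH_n)}{m}\big/\binom{(\alpha+\gamma/2) v(\calH_n)}{m}$ grows like $\bigl(\tfrac{\alpha+\gamma}{\alpha+\gamma/2}\bigr)^m$, which dominates $|\calC_n| \leq \exp(T p_n v(\calH_n) \log(1/p_n))$ as soon as $m \geq C p_n v(\calH_n)$ for a sufficiently large $C = C(\gamma,T,\alpha)$, the desired bound follows.

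For part~\ref{itm:BMS_2}, I would apply $(\alpha,\calB)$-stability with a carefully chosen $\eps' > 0$ to obtain $\delta' > 0$ such that any container $C$ with $|C| \geq (\alpha - \delta') v(\calH_n)$ admits some $B \in \calB_n$ with $|C \setminus B| \leq \eps' v(\calH_n)$; again take $\eta < \delta'$ in the container lemma. Containers of size below $(\alpha-\delta') v(\calH_n)$ contribute at most $|\calC_n| \cdot \binom{(\alpha-\delta') v(\calH_n)}{m}$ independent sets, which is already at most $(1-\beta_1)^m \cdot \binom{\alpha v(\calH_n)}{m}$ for some $\beta_1 > 0$ once $m \geq C p_n v(\calH_n)$. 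For a larger container $C$ paired with $B$, the $m$-independent sets $I \subseteq C$ with $|I \setminus B| \geq \gamma m$ number at most
\[
 \binom{|C \cap B|}{\lceil (1-\gamma) m \rceil} \cdot \binom{\eps' v(\calH_n)}{\lceil \gamma m \rceil},
\]
and picking $\eps'$ sufficiently small relative to $\gamma$ makes the second factor $(\eps'/\gamma)^{\gamma m}$ gain a factor $(1-\beta_2)^m$ relative to $\binom{\alpha v(\calH_n)}{m}$. Summing over the at most $|\calC_n|$ containers and absorbing that factor into the exponent in $m$ as in part~\ref{itm:BMS_1} yields the claimed bound with $\beta = \min(\beta_1,\beta_2)/2$.

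The main obstacle is of course the quantitative form of the container lemma itself: obtaining a family $\calC_n$ satisfying (a)--(c) from the degree bounds $\Delta_\ell$ is the technical heart of the Saxton--Thomason/Balogh--Morris--Samotij method, and I would quote it rather than reprove it. The remaining difficulty is purely book-keeping, namely choosing $\eta$, $\eps'$, and $\delta'$ in terms of $\gamma$ so that the exponential-in-$m$ savings dominate the container-count factor, which is exactly where the lower bound $m \geq C p_n v(\calH_n)$ is used.
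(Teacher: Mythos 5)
First, a point of reference: the paper does not prove this statement at all --- it is imported verbatim as Theorems~5.4 and~6.3 of~\cite{BaloghMS15} and used as a black box --- so there is no in-paper proof to compare against, and I am assessing your sketch on its own terms. Your overall strategy (hypergraph containers, with the density hypothesis forcing containers to have size at most $(\alpha+\gamma/2)v(\calH_n)$ and the stability hypothesis forcing large containers to sit near some $B \in \calB_n$) is indeed the route taken in~\cite{BaloghMS15}, and several pieces of your bookkeeping, such as the split into small and large containers in part~2 and the bound $\binom{|C \cap B|}{\lceil(1-\gamma)m\rceil}\binom{\eps' v}{\lceil\gamma m\rceil}$ with $\eps' \ll \gamma$, are on the right track.

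There is, however, a genuine gap in the step where you absorb the container count, and it is not mere bookkeeping. You assert that $\bigl(\tfrac{\alpha+\gamma}{\alpha+\gamma/2}\bigr)^m$ dominates $|\calC_n| \leq \exp\bigl(T p_n v(\calH_n)\log(1/p_n)\bigr)$ as soon as $m \geq C p_n v(\calH_n)$ for a sufficiently large constant $C$. Taking $m = C p_n v(\calH_n)$, this would require $C\log\tfrac{\alpha+\gamma}{\alpha+\gamma/2} \geq T\log(1/p_n)$, which no constant $C$ can satisfy once $p_n \to 0$; in every application in this paper $p_n = n^{-1/2}$, so the uncompensated loss is a factor of order $n^{\Theta(\sqrt{n})}$, which is superexponential in $m$ in the critical range $m = \Theta(p_n v(\calH_n))$. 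The same defect affects both parts. The repair is precisely the feature of the container theorem that your statement (a)--(c) discards: each independent set $I$ carries a \emph{fingerprint} $S \subseteq I$ with $|S| \leq T p_n v(\calH_n)$ that determines its container $f(S)$. One must count $I$ by first choosing $S$ as a subset of $I$ itself and then the remaining $m - |S|$ elements inside $f(S)$, yielding a bound of the form $\sum_{s \leq T p_n v}\binom{v}{s}\binom{(\alpha+\gamma/2)v}{m-s}$; the factor $\binom{v}{s} \leq (ev/s)^s$ is then offset by the ratio $\binom{N}{m-s}/\binom{N}{m} \leq (m/(N-m))^s$ with $N = (\alpha+\gamma)v$, and it is this trade-off --- controlled because $s \leq (T/C)m$ --- rather than the raw comparison of $c^m$ against the number of containers, that is where the hypothesis $m \geq C p_n v(\calH_n)$ actually earns its keep. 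As written, your argument proves a statement strictly weaker than the theorem.
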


\subsection{Janson's Inequality}

Janson's inequality is a useful tool to bound the probability that no event of a collection of `mostly' independent events occurs. See, e.g.,~\cite[Chapter~8]{AlonS16}.

\begin{lemma}[Janson's Inequality]\label{lemma:Janson}
Let $\{B_i\}_{i \in J}$ be a family of subsets of a finite set $X$ and let $p \in [0,1]$. Denote
\[ \mu = \sum_{i \in J}{p^{|B_i|}}~~~\mbox{and}~~~\Delta = \sum_{i \sim j}{p^{|B_i \cup B_j|}},\]
where $i \sim j$ means that $i$ and $j$ are distinct indices in $J$ satisfying $B_i \cap B_j \neq \emptyset$.
Let $R$ be a random subset of $X$, where every element of $X$ is chosen to be in $R$ independently with probability $p$.
Then, the probability that no $B_i$ for $i \in J$ is contained in $R$ is at most $\max (e^{-\mu/2},e^{-\mu^2/(2\Delta)})$.
\end{lemma}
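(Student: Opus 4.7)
My plan is to derive the two exponential bounds in the maximum separately: the bound $e^{-\mu/2}$ comes directly from the basic Janson inequality, while $e^{-\mu^2/(2\Delta)}$ comes from applying the basic inequality to a carefully chosen sub-collection of the indices. For $i \in J$, let $A_i$ denote the increasing event $\{B_i \subseteq R\}$ on the product space $\{0,1\}^X$, so that $\Pr[A_i] = p^{|B_i|}$ and, for $i \sim j$, $\Pr[A_i \cap A_j] = p^{|B_i \cup B_j|}$; when $B_i \cap B_j = \emptyset$, the events $A_i, A_j$ depend on disjoint coordinates and are therefore independent. The quantity to bound is $\Pr\bigl[\bigcap_{i \in J} \overline{A_i}\bigr]$.

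\textit{Step 1: basic Janson.} I would first prove $\Pr\bigl[\bigcap_{i \in J} \overline{A_i}\bigr] \leq \exp(-\mu + \Delta/2)$. Order $J$ arbitrarily and decompose by the chain rule as $\prod_{i \in J} \Pr\bigl[\overline{A_i} \mid \bigcap_{j<i} \overline{A_j}\bigr]$. Splitting the earlier indices into the neighbors $D_i = \{j<i : j \sim i\}$ and their complement, using the independence of $A_i$ from the non-neighbor events (disjoint coordinates), and invoking the Harris--FKG correlation inequality for the monotone events $A_i$ and $\overline{A_j}$, one establishes the standard lower bound
\[
\Pr\Bigl[A_i \,\Big|\, \bigcap_{j<i} \overline{A_j}\Bigr] \geq \Pr[A_i] - \sum_{j \in D_i} \Pr[A_i \cap A_j].
\]
Applying $1-x \leq e^{-x}$ factorwise and summing the exponents then yields the claimed bound, the diagonal contribution accounting for $\mu$ and each ordered pair with $i \sim j$ appearing once in the doubled sum $\Delta$.

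\textit{Step 2: extracting the max.} If $\Delta \leq \mu$, then $-\mu + \Delta/2 \leq -\mu/2$ and Step~1 already gives $e^{-\mu/2}$. If $\Delta > \mu$, I would instead apply the basic bound not to all of $J$ but to a well-chosen subset $J_0 \subseteq J$: for every $J_0 \subseteq J$,
\[
\Pr\Bigl[\bigcap_{i \in J} \overline{A_i}\Bigr] \leq \Pr\Bigl[\bigcap_{i \in J_0} \overline{A_i}\Bigr] \leq \exp\bigl(-\mu(J_0) + \Delta(J_0)/2\bigr),
\]
where $\mu(J_0), \Delta(J_0)$ are the analogues of $\mu, \Delta$ restricted to $J_0$. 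Picking $J_0$ by including each index independently with probability $q = \mu/\Delta \in (0,1]$, one computes $\mathbb{E}[\mu(J_0) - \Delta(J_0)/2] = q\mu - q^2\Delta/2 = \mu^2/(2\Delta)$, so by the probabilistic method some realization of $J_0$ attains at least this value, giving $\Pr\bigl[\bigcap_{i \in J} \overline{A_i}\bigr] \leq \exp(-\mu^2/(2\Delta))$. Taking the worse of the two cases proves the claim.

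\textit{Main obstacle.} The only delicate point will be the Harris--FKG-based lower bound on $\Pr[A_i \mid \bigcap_{j<i} \overline{A_j}]$: one must separate the conditioning into neighbors (where pairwise correlations must be tracked explicitly) and non-neighbors (where disjoint supports let $\Pr[A_i]$ factor cleanly through the conditioning), and then combine this with the negative correlation between the increasing event $A_i \cap A_j$ and the decreasing conditioning event. Once this bound is in place, both the basic inequality and the subsampling argument reduce to short calculations.
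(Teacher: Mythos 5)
Your proposal is correct: it is the standard Boppana--Spencer chain-rule/Harris argument for the basic Janson bound $e^{-\mu+\Delta/2}$, followed by the usual case split and random-subsampling trick for the extended bound $e^{-\mu^2/(2\Delta)}$, which together give exactly the stated maximum. The paper does not prove this lemma at all --- it quotes it from the literature (Alon--Spencer, Chapter 8) --- and your argument is essentially the proof found there, with the bookkeeping for the ordered-pair convention in $\Delta$ handled correctly.
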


\subsection{Sets of Integers with Small Sumset}\label{sec:sumset}

We need the following bound due to Green and Morris~\cite{GreenM16} on the number of sets of integers with a bounded-size sumset.
\begin{theorem}[Theorem~1.1 in~\cite{GreenM16}]\label{thm:partitions_GM}
For every $\delta > 0$ and $\lambda > 0$, for a sufficiently large integer $\ell$ the following holds.
For every $k \in \N$ there are at most
\[2^{\delta \ell} \cdot {\binom {\lambda \ell/2} {\ell} } \cdot k^{\lfloor \lambda +\delta \rfloor}\]
sets $S \subseteq [k]$ with $|S|=\ell$ and $|S+S| \leq \lambda \cdot \ell$.
\end{theorem}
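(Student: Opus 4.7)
My plan is to combine a Freiman-type structural theorem for sets of integers with small doubling with a covering-and-counting argument. Every $S \subseteq [k]$ with $|S|=\ell$ and $|S+S| \leq \lambda\ell$ should lie inside a generalised arithmetic progression (GAP) of bounded dimension and controlled volume, so I would enumerate the possible enveloping GAPs in $[k]$ and then count $\ell$-subsets inside each.

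Concretely, the first step is to invoke a quantitatively sharp Freiman-type theorem for integer sets of doubling at most $\lambda$ (beyond the range of Theorem~\ref{thm:Freiman}, this would be an iteration or a Schoen-type refinement) to show that every such $S$ is contained in a proper $d$-dimensional GAP $P = \{a_0 + \sum_{i=1}^d x_i q_i : 0 \leq x_i < L_i\} \subseteq \Z$ with $d \leq \lfloor\lambda+\delta\rfloor - 1$ and volume $|P| = \prod_{i=1}^d L_i \leq (\lambda/2 + \delta/4)\,\ell$. The second step is to count the proper GAPs of this form embeddable in $[k]$: in a canonical normal form such a GAP is specified by its base point $a_0 \in [k]$ and $d$ ordered common differences $q_1,\dots,q_d \in [k]$, giving at most $k^{d+1} \leq k^{\lfloor\lambda+\delta\rfloor}$ possibilities. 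The third step is to count $\ell$-subsets of a fixed such GAP, which contributes at most $\binom{|P|}{\ell} \leq 2^{\delta\ell/2}\binom{\lambda\ell/2}{\ell}$. Multiplying the three estimates and absorbing lower-order slacks into the $2^{\delta\ell}$ factor yields the claimed bound.

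The main obstacle is producing a Freiman-type envelope with leading volume constant exactly $\lambda/2$ and dimension at most $\lfloor\lambda+\delta\rfloor - 1$. Theorem~\ref{thm:Freiman} applies only to doublings of size at most $3 - 4/\ell$ and there it yields an AP of length $(\lambda-1)\ell+O(1)$, which matches $(\lambda/2)\ell$ only at $\lambda = 2$. For larger $\lambda$ one genuinely needs higher-dimensional GAPs, and tracking the precise leading constant rather than an unspecified $C(\lambda)\ell$ (as the usual Freiman--Ruzsa theorem would supply) requires an iterative or variational sharpening of the Freiman machinery. This tight structural input, coupled with the careful enumeration over normal forms and the binomial estimate for the subset count, is the crux of the argument.
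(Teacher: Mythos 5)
This statement is quoted in the paper as an external result (Theorem~1.1 of Green and Morris~\cite{GreenM16}); the paper gives no proof of it, so the only question is whether your argument would actually establish it. It would not: the structural claim in your first step is false. A set $S$ with $|S|=\ell$ and $|S+S|\leq\lambda\ell$ need not be contained in any proper GAP of dimension at most $\lfloor\lambda+\delta\rfloor-1$ and volume $(\lambda/2+\delta/4)\ell$. For a concrete counterexample, take $S=[\ell-1]\cup\{x\}$ with $x$ far from $[\ell-1]$ and generic: then $|S+S|\leq 3\ell-3$, so $\lambda$ can be taken just below $3$, yet any proper GAP containing the interval $[\ell-1]$ together with the distant point $x$ must have volume at least $2(\ell-1)$, well above the required $(3/2+\delta/4)\ell$. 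This is not a fixable technicality. Even if one substitutes the true (weaker) envelope, the union bound you propose becomes $\binom{2\ell}{\ell}=4^{\ell}$ versus the target $2^{\delta\ell}\binom{3\ell/2}{\ell}\approx 2.6^{\ell}$, i.e.\ exponentially too large. The ``AP plus a few scattered points'' configurations are exactly what forces the factor $k^{\lfloor\lambda+\delta\rfloor}$ in the statement, and they show that no Freiman-type sharpening with leading volume constant $\lambda/2$ exists.

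More fundamentally, the enumerate-envelopes-then-count-subsets strategy is inherently lossy here: it charges every $\ell$-subset of the envelope to the count, whereas the theorem asserts that the sets with small sumset are only about as numerous as the subsets of a \emph{single} AP of length $\lambda\ell/2$ (times a polynomial in $k$), even though they are not all contained in such short structures. Closing this gap is precisely the content of Green and Morris's work, which proceeds via a container/Kleitman--Winston-type argument combined with iterated applications of Freiman's $3k-4$ theorem, rather than via a one-shot structure theorem. You correctly identify the volume constant as the crux, but you treat it as a missing refinement of Freiman's machinery when in fact the refinement you need provably does not hold.
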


We also use, in this context, the following simple bound given in~\cite{AlonRefine14} on the number of integer partitions of an integer $k$ into $\ell$ distinct parts, i.e., the number of sets of $\ell$ positive integers whose sum is $k$.
\begin{lemma}[Lemma~5.1 in~\cite{AlonRefine14}]\label{lemma:partitions_basic}
For every two positive integers $k$ and $\ell$, the number of partitions of $k$ into $\ell$ distinct parts is at most
\[\Big (\frac{e^2k}{\ell^2} \Big )^\ell.\]
\end{lemma}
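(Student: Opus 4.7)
The plan is to reduce the count to the easier problem of counting ordered compositions, and then apply standard Stirling-type estimates to match the stated form $(e^2k/\ell^2)^\ell$. First, observe that every partition of $k$ into $\ell$ distinct positive parts corresponds to exactly $\ell!$ ordered $\ell$-tuples $(a_1,\ldots,a_\ell)$ of positive integers with $\sum_i a_i=k$, obtained by permuting the parts. Since the total number of ordered $\ell$-tuples of positive integers summing to $k$---with or without distinctness---is the classical stars-and-bars count $\binom{k-1}{\ell-1}$, the number of partitions is at most
\[
\frac{1}{\ell!}\binom{k-1}{\ell-1}\;\leq\;\frac{k^{\ell-1}}{(\ell-1)!\,\ell!},
\]
using the crude estimate $\binom{k-1}{\ell-1}\leq k^{\ell-1}/(\ell-1)!$.

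To convert this into the desired form, I would apply the lower bounds $\ell!\geq(\ell/e)^\ell$ and $(\ell-1)!\geq((\ell-1)/e)^{\ell-1}$, obtaining
\[
\frac{k^{\ell-1}}{(\ell-1)!\,\ell!}\;\leq\;\frac{e^{2\ell-1}\,k^{\ell-1}}{\ell^{\ell}\,(\ell-1)^{\ell-1}}.
\]
A short rearrangement shows that the target inequality $\frac{e^{2\ell-1}k^{\ell-1}}{\ell^{\ell}(\ell-1)^{\ell-1}}\leq\bigl(\frac{e^2k}{\ell^2}\bigr)^\ell$ is equivalent to $\ell^{\ell}/(\ell-1)^{\ell-1}\leq e\cdot k$. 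The left-hand side equals $\ell\cdot\bigl(1+\frac{1}{\ell-1}\bigr)^{\ell-1}\leq e\cdot\ell$ by the standard bound on $(1+1/x)^x$, so it suffices to verify $\ell\leq k$. When $\ell>k$ (more generally whenever $k<\binom{\ell+1}{2}$), the minimum sum $1+2+\cdots+\ell$ of $\ell$ distinct positive integers already exceeds $k$, hence there are no partitions to count and the bound is vacuous.

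I do not anticipate any real obstacle: the argument is essentially bookkeeping combined with Stirling's estimate, and the only mild care needed is in treating the boundary case $\ell=1$ (where we adopt the convention $0^0=1$ and the estimate reduces to the trivial $1\leq e^2 k$) and the vacuous range $k<\binom{\ell+1}{2}$. If a tighter constant were desired one could replace the crude inequality $\binom{k-1}{\ell-1}\leq k^{\ell-1}/(\ell-1)!$ by $(k/\ell)^{\ell-1}\cdot e^{\ell-1}$ via $\binom{k}{\ell}\leq(ek/\ell)^\ell$, but this is not needed for the stated bound.
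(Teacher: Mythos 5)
Your argument is correct and complete: the $\ell!$-to-one correspondence with compositions, the stars-and-bars count $\binom{k-1}{\ell-1}$, the factorial lower bounds, and the reduction of the final inequality to $\ell^{\ell}/(\ell-1)^{\ell-1}\leq e\ell\leq ek$ (with the vacuous case $k<\binom{\ell+1}{2}$ handled separately) all check out. The paper itself imports this lemma from~\cite{AlonRefine14} without proof, and your derivation is essentially the standard one given there, so there is nothing further to compare.
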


\section{Sum-free Sets of Integers with a Forbidden Sum}\label{sec:forbidden}

We study the sum-free sets $A \subseteq [n]$ that satisfy $2n+1 \notin \sum A$, where the integer $2n+1$ is referred to as a forbidden sum.
We start with the extremal question of how large can such a set be, and then turn to study the number of these sets.

\subsection{The Maximum Size}\label{sec:max}

The following theorem confirms Theorem~\ref{thm:extremalIntro}.
\begin{theorem}\label{thm:n/3}
For every integer $n$, every sum-free set $A \subseteq [n]$ such that $2n+1 \notin (3A) \cup (4A)$ satisfies
\[|A| \leq \Big\lfloor \frac{1}{3}(n+1) \Big \rfloor.\]
\end{theorem}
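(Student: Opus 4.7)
The plan is to use the reflection $\sigma(x) = 2n+1-x$ to place $A$ and $2A$ together with their $\sigma$-images inside $[1,2n]$ and add up sizes. Concretely, I would define
\[ T \,=\, A \,\cup\, \bigl((2n+1)-A\bigr) \qquad\text{and}\qquad U \,=\, 2A \,\cup\, \bigl((2n+1)-2A\bigr), \]
both regarded as subsets of $[1,2n]$. Because $A \subseteq [1,n]$, the reflected copy $(2n+1)-A$ lies in $[n+1,2n]$, so the two pieces of $T$ sit in disjoint intervals and $|T| = 2|A|$. The hypothesis $2n+1 \notin 4A$ says exactly that $2A$ and $(2n+1)-2A$ are disjoint (since any element of their intersection would read as $a_1+a_2 = 2n+1-a_3-a_4$), so $|U| = 2|2A|$.

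The central step, and the only place where all the hypotheses come together, is to verify that $T$ and $U$ are disjoint in $[1,2n]$. Expanding $T \cap U$ into four pairwise intersections: sum-freeness gives $A \cap 2A = \emptyset$ and, after applying $\sigma$ to both sides, also $\bigl((2n+1)-A\bigr) \cap \bigl((2n+1)-2A\bigr) = \emptyset$; while the assumption $2n+1 \notin 3A$ yields both $A \cap \bigl((2n+1)-2A\bigr) = \emptyset$ and $\bigl((2n+1)-A\bigr) \cap 2A = \emptyset$, since any element of such an intersection would exhibit a representation $a_1+a_2+a_3 = 2n+1$ with $a_i \in A$.

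Once disjointness is in hand, $T \sqcup U \subseteq [1,2n]$ gives $|T|+|U| \leq 2n$, i.e.\ $2|A| + 2|2A| \leq 2n$. Plugging in the elementary bound $|2A| \geq 2|A|-1$ from Claim~\ref{claim:Freiman} produces $6|A| - 2 \leq 2n$, hence $|A| \leq (n+1)/3$, and integrality forces $|A| \leq \lfloor (n+1)/3 \rfloor$. I expect the only real obstacle to be spotting the right symmetrization; once $T$ and $U$ are written down, each of the four inputs (the containment $A \subseteq [1,n]$, sum-freeness, $2n+1 \notin 3A$, and $2n+1 \notin 4A$) is consumed exactly once, and the extremal set $B_n$ will saturate $|T|+|U|=2n$, confirming the bound is tight.
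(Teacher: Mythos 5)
Your proof is correct and is essentially the paper's argument in symmetrized form: the paper sets $C = A \cup (A+A)$ and shows $|C| \le n$ by observing that $C$ cannot contain both elements of any pair $\{i,\,2n+1-i\}$, which is exactly your statement that $C$ and $(2n+1)-C$ are disjoint in $[2n]$; your $T \sqcup U$ is precisely $C \sqcup \bigl((2n+1)-C\bigr)$, and both proofs finish with the same bound $|A+A| \ge 2|A|-1$.
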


\begin{proof}
Let $A \subseteq [n]$ be a sum-free set such that $2n+1 \notin (3A) \cup (4A)$.
Denote \[C = A \cup (A+A) \subseteq [2n].\]
By the sum-freeness of $A$ and Claim~\ref{claim:Freiman},
\[ |C| = |A| + |A + A| \geq |A| + (2|A|-1) = 3|A|-1.\]
We claim that $|C| \leq n$. Otherwise, by the pigeonhole principle, there exists $1 \leq i \leq n$ for which $\{i, 2n+1-i\} \subseteq C$.
Notice that $i$ belongs to either $A$ or $A+A$ and that $2n+1-i$, which is larger than $n$, belongs to $A+A$. This implies that $2n+1 \in (3A) \cup (4A)$, in contradiction to our assumption.
It follows that $3|A|-1 \leq |C| \leq n$, hence $|A| \leq \lfloor \frac{1}{3}(n+1) \rfloor$ as required.
\end{proof}

\begin{remark}
We note that the assumption $2n+1 \notin (3A) \cup (4A)$ in Theorem~\ref{thm:n/3} cannot be relaxed to the assumption $2n+1 \notin 3A$. See Section~\ref{sec:3A} for a detailed discussion.
\end{remark}

To obtain a matching lower bound, consider the interval $B_n = [ \lceil \frac{2}{3}(n+1) \rceil,n]$ whose size is
\[ |B_n| = n - \Big(\Big\lceil \frac{2}{3}(n+1) \Big\rceil-1\Big) = \Big\lfloor \frac{1}{3}(n+1) \Big \rfloor.\]
Clearly, the sum of every two element of $B_n$ is smaller than $2n+1$ and the sum of every three is larger than $2n+1$.
This implies that $2n+1 \notin \sum B_n$ and, in particular, that $2n+1 \notin (3B_n) \cup (4B_n)$. Combining it with Theorem~\ref{thm:n/3}, we derive the following corollary.

\begin{corollary}\label{cor:max_set}
For every integer $n$, the following holds.
\begin{enumerate}
  \item The maximum size of a sum-free set $A \subseteq [n]$ such that $2n+1 \notin \sum A$ is $\lfloor \frac{1}{3}(n+1) \rfloor$.
  \item The maximum size of a sum-free set $A \subseteq [n]$ such that $2n+1 \notin (3A) \cup (4A)$ is $\lfloor \frac{1}{3}(n+1) \rfloor$.
\end{enumerate}
\end{corollary}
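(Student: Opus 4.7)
The plan is to derive both parts of the corollary by combining the upper bound from Theorem~\ref{thm:n/3} with the obvious lower bound witnessed by the interval $B_n = [\lceil \tfrac{2}{3}(n+1)\rceil, n]$, whose size has already been computed in the paragraph preceding the statement to be $\lfloor \tfrac{1}{3}(n+1)\rfloor$.

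For the upper bounds, part~(2) is literally the content of Theorem~\ref{thm:n/3}, so nothing new is needed there. For part~(1), I would observe that $(3A)\cup(4A) \subseteq \sum A$ by definition of $\sum A = \bigcup_{k \ge 0}(kA)$, so any $A$ satisfying $2n+1 \notin \sum A$ automatically satisfies $2n+1 \notin (3A) \cup (4A)$; the bound $|A| \le \lfloor \tfrac{1}{3}(n+1)\rfloor$ therefore follows from Theorem~\ref{thm:n/3} applied to this $A$.

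For the matching lower bounds I would verify, as already sketched in the paragraph just before the statement, that the interval $B_n$ is itself sum-free and satisfies $2n+1 \notin \sum B_n$. Sum-freeness is immediate since every element of $2B_n$ is at least $2\lceil \tfrac{2}{3}(n+1)\rceil > n$, so $2B_n$ is disjoint from $[n] \supseteq B_n$. For the forbidden sum, I would split on $k$: for $k \le 2$ every element of $kB_n$ is at most $2n < 2n+1$, and for $k \ge 3$ every element of $kB_n$ is at least $3 \lceil \tfrac{2}{3}(n+1)\rceil \ge 2(n+1) > 2n+1$, so $2n+1 \notin kB_n$ for every $k \ge 0$. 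This shows simultaneously that $B_n$ is a legal witness for both part~(1) and part~(2), achieving the claimed maximum size $\lfloor \tfrac{1}{3}(n+1)\rfloor$.

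There is really no obstacle here: Theorem~\ref{thm:n/3} does all the heavy lifting, and the remaining work is the routine verification of the interval construction. The only point worth stating carefully is the inclusion $(3A)\cup(4A) \subseteq \sum A$, which reduces part~(1) to part~(2) cleanly.
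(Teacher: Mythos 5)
Your proposal is correct and matches the paper's argument: the upper bound for both parts comes from Theorem~\ref{thm:n/3} via the inclusion $(3A)\cup(4A)\subseteq\sum A$, and the lower bound is witnessed by $B_n$, verified exactly as in the paragraph preceding the corollary (two elements sum to less than $2n+1$, three or more to strictly more). Nothing is missing.
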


\subsection{Supersaturation}\label{sec:super}

We turn to prove a supersaturation result for sum-free subsets of $[n]$ with $2n+1$ as a forbidden sum. Namely, we show that every set $A \subseteq [n]$ of size linearly larger than the bound given in Theorem~\ref{thm:n/3} contains many subsets that contradict the sum-freeness or the forbidden sum. To state it formally, we recall the following notation.

\begin{definition}\label{def:forbidden_sets}
For an integer $n$, let $\calF^{(3)}_n$ denote the collection of all sets $\{x,y,z\} \subseteq [n]$ of distinct $x,y,z$ satisfying $x+y=z$ or $x+y+z=2n+1$.
For $k \geq 4$, let $\calF^{(k)}_n$ denote the collection of all sets $\{x_1,\ldots,x_k\} \subseteq [n]$ of distinct $x_1,\ldots,x_k$ satisfying $\sum_{i=1}^{k}{x_i}=2n+1$.
\end{definition}

The proof of the supersaturation result, stated below, uses the corollaries of Green's arithmetic removal lemma given in Section~\ref{sec:Green}.

\begin{theorem}\label{thm:supersaturation}
For every $\eps >0$ there exists $\delta = \delta(\eps) >0$ such that for every sufficiently large integer $n$ the following holds.
Every set $A \subseteq [n]$ of size $|A| \geq (\frac{1}{3}+\eps) \cdot n$ contains at least $\delta \cdot n^{k-1}$ sets from $\calF_n^{(k)}$ for some $k \in \{3,4\}$.
\end{theorem}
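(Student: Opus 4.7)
The natural approach is a contrapositive argument: assume that $|A| \geq (\frac{1}{3}+\eps) n$ but $A$ contains fewer than $\delta \cdot n^{k-1}$ sets from $\calF_n^{(k)}$ for \emph{both} $k=3$ and $k=4$, and derive a contradiction by removing a small set of elements from $A$ to obtain a set that satisfies the hypotheses of Theorem~\ref{thm:n/3} while still being too large. The removal steps are exactly what the corollaries to Green's arithmetic removal lemma (Corollaries~\ref{cor:Green_sets} and~\ref{cor:3A4A_sets}) are designed to provide.

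In detail, I fix a small auxiliary parameter $\eps_0 = \eps/10$ and set $\delta \coloneqq \min\{\delta'(\eps_0),\,\delta''_3(\eps_0),\,\delta''_4(\eps_0)\}$, where the three quantities come from Corollary~\ref{cor:Green_sets}, Corollary~\ref{cor:3A4A_sets} with $k=3$, and Corollary~\ref{cor:3A4A_sets} with $k=4$, respectively. Since the sum-triples $\{x,y,z\}$ with $x+y=z$ and the triples $\{x,y,z\}$ with $x+y+z=2n+1$ both form subcollections of $\calF_n^{(3)}$, the assumption that $A$ has fewer than $\delta n^2$ sets from $\calF_n^{(3)}$ yields both: (i) by Corollary~\ref{cor:Green_sets}, a subset $A_1\subseteq A$ with $|A_1|\leq \eps_0 n$ such that $A\setminus A_1$ is sum-free, and (ii) by Corollary~\ref{cor:3A4A_sets} applied with $k=3,\ell=2n+1$, a subset $A_2\subseteq A$ with $|A_2|\leq \eps_0 n$ such that $2n+1 \notin 3(A\setminus A_2)$. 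Similarly, the assumption that $A$ has fewer than $\delta n^3$ sets from $\calF_n^{(4)}$, combined with Corollary~\ref{cor:3A4A_sets} for $k=4,\ell=2n+1$, gives a subset $A_3\subseteq A$ with $|A_3|\leq \eps_0 n$ such that $2n+1 \notin 4(A\setminus A_3)$.

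Now let $A^* = A\setminus(A_1\cup A_2\cup A_3)$. As a subset of $A\setminus A_1$, the set $A^*$ is sum-free, and as a subset of $A\setminus A_2$ and $A\setminus A_3$ it satisfies $2n+1\notin (3A^*)\cup(4A^*)$. Theorem~\ref{thm:n/3} therefore forces $|A^*|\leq \lfloor (n+1)/3\rfloor$. On the other hand, $|A^*|\geq |A|-3\eps_0 n \geq (\frac{1}{3}+\eps-\frac{3\eps}{10})\cdot n = (\frac{1}{3}+\frac{7\eps}{10})\cdot n$, which strictly exceeds $(n+1)/3$ once $n$ is large enough (say $n\geq 2/\eps$), contradicting the bound from Theorem~\ref{thm:n/3}. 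Hence one of the two counting hypotheses must fail, giving the desired supersaturation.

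There is no serious conceptual obstacle here — the heavy lifting is done by Green's removal lemma, whose corollaries are already stated and proved earlier in the paper, and by the extremal bound Theorem~\ref{thm:n/3}. The only care required is the parameter bookkeeping: choosing $\eps_0$ small enough that the total deleted mass $3\eps_0 n$ leaves a gap strictly larger than $1/3$ above $|B_n|$, and invoking the three removal statements with a single small enough $\delta$ so that the same $\delta$ controls all three conditions simultaneously.
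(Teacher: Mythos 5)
Your proposal is correct and follows essentially the same route as the paper's proof: choose $\delta$ as the minimum of the parameters from Corollaries~\ref{cor:Green_sets} and~\ref{cor:3A4A_sets}, delete three small sets to obtain a sum-free subset avoiding $2n+1$ in its triple and quadruple sums, and contradict Theorem~\ref{thm:n/3}. The only (harmless) differences are the choice of $\eps/10$ in place of $\eps/6$ and your explicit remark that both types of triples are subcollections of $\calF_n^{(3)}$, which the paper leaves implicit.
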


\begin{proof}
For a given $\eps >0$,  define $\delta = \min ( \delta'(\frac{\eps}{6}), \delta''_3(\frac{\eps}{6}), \delta''_4(\frac{\eps}{6}))$, where $\delta'$, $\delta''_3$, and $\delta''_4$ are as in Corollaries~\ref{cor:Green_sets} and~\ref{cor:3A4A_sets}.
Assume by contradiction that for a sufficiently large integer $n$, a set $A \subseteq [n]$ of size $|A| \geq (\frac{1}{3}+\eps) \cdot n$ contains fewer than $\delta \cdot n^{k-1}$ sets from $\calF_n^{(k)}$ for every $k \in \{3,4\}$.
Applying Corollary~\ref{cor:Green_sets}, we obtain a set $D_1 \subseteq A$ of size $|D_1| \leq \frac{\eps}{6} \cdot n$ for which $A \setminus D_1$ is sum-free.
Applying Corollary~\ref{cor:3A4A_sets} twice with $\ell = 2n+1$ and $k\in \{3,4\}$, we obtain two sets $D_2, D_3 \subseteq A$, each of which is of size at most $\frac{\eps}{6} \cdot n$, such that $2n+1 \notin 3(A \setminus D_2)$ and $2n+1 \notin 4(A \setminus D_3)$.
Consider the set $B = A \setminus (D_1 \cup D_2 \cup D_3)$, and notice that $B$ is sum-free and satisfies $2n+1 \notin (3B) \cup (4B)$. The size of $B$ satisfies
\[|B| \geq |A|-|D_1|-|D_2|-|D_3| \geq \Big (\frac{1}{3}+\eps \Big ) \cdot n - 3 \cdot \frac{\eps}{6} \cdot n \geq  \Big (\frac{1}{3}+\frac{\eps}{2} \Big ) \cdot n.\]
We get a contradiction to Theorem~\ref{thm:n/3}, so we are done.
\end{proof}

Theorem~\ref{thm:supersaturation} combined with a result of~\cite{BaloghMS15} on counting independent sets in hypergraphs (see Section~\ref{sec:independent}) already allows us to derive a bound of $2^{(\frac{1}{3}+o(1)) \cdot n}$ on the number of sum-free sets $A \subseteq [n]$ with $2n+1 \notin (3A) \cup (4A)$, and, in particular, on those that satisfy $2n+1 \notin \sum A$.
We include a proof of this bound for didactical reasons and then turn to prove the tighter bound of $O(2^{n/3})$ (see Section~\ref{sec:tight}).

\begin{corollary}
There are $2^{(\frac{1}{3}+o(1)) \cdot n}$ sum-free sets $A \subseteq [n]$ satisfying $2n+1 \notin (3A) \cup (4A)$.
\end{corollary}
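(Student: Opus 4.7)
The lower bound of $2^{n/3}$ is immediate from taking all subsets of $B_n = [\lceil 2(n+1)/3 \rceil, n]$, each of which is sum-free with $2n+1 \notin \sum A$. For the matching upper bound, the plan is to package both constraints---sum-freeness and $2n+1 \notin (3A)\cup(4A)$---as independence in a single $4$-uniform hypergraph $\calG_n$ on vertex set $[n]$, and then apply item~\ref{itm:BMS_1} of Theorem~\ref{thm:BMS_items}. Take the edges of $\calG_n$ to be all $4$-subsets $S \in \binom{[n]}{4}$ that either belong to $\calF^{(4)}_n$ or contain a $3$-subset belonging to $\calF^{(3)}_n$. If $A \subseteq [n]$ is sum-free and satisfies $2n+1 \notin (3A) \cup (4A)$, then $A$ contains no member of $\calF^{(3)}_n$ (each such triple would violate either sum-freeness or $2n+1 \notin 3A$) and no member of $\calF^{(4)}_n$ (each would violate $2n+1 \notin 4A$). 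Consequently $A \in \calI(\calG_n)$, and it suffices to prove $|\calI(\calG_n)| \le 2^{(1/3+o(1))n}$.

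I would next verify the hypotheses of Theorem~\ref{thm:BMS_items}(\ref{itm:BMS_1}) with $\alpha = 1/3$. A routine count gives $e(\calG_n) = \Theta(n^3)$ and the codegree bounds $\Delta_1(\calG_n) = O(n^2)$, $\Delta_2(\calG_n) = O(n)$, $\Delta_3(\calG_n) = O(n)$, $\Delta_4(\calG_n) \le 1$; combined with $e(\calG_n)/v(\calG_n) = \Theta(n^2)$ these fit the degree hypothesis with $p_n = c/\sqrt{n}$ for a suitable constant $c>0$. For $\frac{1}{3}$-density, let $|A| \ge (\frac{1}{3}+\eps)n$ and invoke Theorem~\ref{thm:supersaturation}: there exists $k \in \{3,4\}$ for which $A$ contains at least $\delta \cdot n^{k-1}$ sets from $\calF^{(k)}_n$. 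If $k=4$, these are $\Omega(n^3)$ edges of $\calG_n$ inside $A$. If $k=3$, each of the $\Omega(n^2)$ forbidden triples in $A$ extends by any of the $|A|-3 = \Omega(n)$ remaining elements of $A$ to an edge of $\calG_n$ contained in $A$; since each $4$-edge is obtained at most $\binom{4}{3}=4$ times in this way, one still gets $\Omega(n^3) = \Omega(e(\calG_n))$ edges in $\calG_n[A]$.

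Theorem~\ref{thm:BMS_items}(\ref{itm:BMS_1}) then yields, for every $\gamma > 0$, a constant $C$ such that $|\calI(\calG_n, m)| \le \binom{(1/3+\gamma)n}{m}$ for all $m \ge C\sqrt{n}$. Summing over $m$ completes the argument: the range $m < C\sqrt{n}$ contributes at most $C\sqrt{n} \cdot \binom{n}{\lceil C\sqrt{n}\rceil} = 2^{O(\sqrt{n}\log n)} = 2^{o(n)}$, while $\sum_m \binom{(1/3+\gamma)n}{m} \le 2^{(1/3+\gamma)n}$. Taking $\gamma$ arbitrarily small gives $|\calI(\calG_n)| \le 2^{(1/3+o(1))n}$, and hence the same bound on the collection being counted.

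The main obstacle I expect is that Theorem~\ref{thm:BMS_items} is stated only for $k$-uniform hypergraphs, so one cannot apply it directly to the natural non-uniform hypergraph with edge set $\calF^{(3)}_n \cup \calF^{(4)}_n$; moreover, neither of the uniform pieces $([n], \calF^{(3)}_n)$ and $([n], \calF^{(4)}_n)$ is individually $\frac{1}{3}$-dense---the first admits independent sets of size $\sim 2n/5$ from the residue classes modulo $5$ appearing in items~\ref{itm:1} and~\ref{itm:2} of Theorem~\ref{thm:Tran} (for suitably chosen $n$), and the second admits large independent sets formed by intervals whose $4$-sums all exceed or avoid $2n+1$. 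The inflation trick---promoting each $\calF^{(3)}_n$-triple to a family of $4$-edges in $\calG_n$ by appending an arbitrary fourth vertex---is precisely what fuses the two density sources into a single $\frac{1}{3}$-dense $4$-uniform object while preserving the codegree bounds required by Theorem~\ref{thm:BMS_items}.
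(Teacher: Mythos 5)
Your proposal is correct and follows essentially the same route as the paper: the paper also builds the $4$-uniform hypergraph whose edges are the $4$-subsets of $[n]$ containing a member of $\calF^{(3)}_n \cup \calF^{(4)}_n$ (which is exactly your $\calG_n$), verifies the same degree bounds with $p_n = \Theta(1/\sqrt{n})$, derives $\tfrac13$-density from Theorem~\ref{thm:supersaturation}, and concludes via Item~\ref{itm:BMS_1} of Theorem~\ref{thm:BMS_items} with the same summation over $m$. Your explicit accounting of how the $k=3$ supersaturation case inflates to $\Omega(n^3)$ four-edges is a detail the paper leaves implicit, but the argument is identical in substance.
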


\begin{proof}
Let $\calH = (\calH_n)_{n \in \N}$ be a sequence of $4$-uniform hypergraphs defined as follows. For every $n$, $\calH_n$ is the hypergraph on the vertex set $[n]$ whose hyperedges are all $4$-subsets of $[n]$ that contain at least one of the sets in $\calF^{(3)}_n \cup \calF^{(4)}_n$.
Observe that $e(\calH_n) = \Theta(n^3)$, and that $\Delta_1(\calH_n) = \Theta(n^2)$, $\Delta_2(\calH_n) = \Theta(n)$, $\Delta_3(\calH_n) = \Theta(n)$, and $\Delta_4(\calH_n) = 1$.
Hence, for every sufficiently large $n$, every $\ell \in [4]$, some constant $c>0$, and $p_n = 1/\sqrt{n}$, we have
\[ \Delta_\ell(\calH_n) \leq c \cdot p_n^{\ell-1} \cdot \frac{e(\calH_n)}{v(\calH_n)}.\]
Since every sum-free set $A \subseteq [n]$ satisfying $2n+1 \notin (3A) \cup (4A)$ forms an independent set in $\calH_n$, it suffices to bound from above the size of $\calI(\calH_n)$.

To this end, let us show that the sequence of hypergraphs $\calH$ is $\frac{1}{3}$-dense (recall Definition~\ref{def:dense_stable}, Item~\ref{itm:dense_def}).
Let $\eps >0$ be a constant.
By Theorem~\ref{thm:supersaturation}, for some $\delta>0$ and every sufficiently large integer $n$, every set $A \subseteq [n]$ of size $|A| \geq (\frac{1}{3}+\eps) \cdot n$ contains at least $\delta \cdot n^{k-1}$ sets from $\calF^{(k)}_n$ for some $k \in \{3,4\}$.
Observe that this implies, using $e(\calH_n) = \Theta(n^3)$, that such an $A$ satisfies $e(\calH_n[A]) \geq \delta' \cdot e(\calH_n)$ for some $\delta'>0$, as required.

Now, we can apply Item~\ref{itm:BMS_1} of Theorem~\ref{thm:BMS_items} to obtain that for every $\gamma >0$ there exists $C>0$ such that for every sufficiently large $n$ and every $m \geq C p_n v(\calH_n) = C \sqrt{n}$,
\[ |\calI(\calH_n,m)| \leq {\binom {(\frac{1}{3}+\gamma)n} {m}}.\]
It follows that
\[ |\calI(\calH_n)| \leq \sum_{m=0}^{C\sqrt{n}}{{\binom {n}{m}}} + \sum_{m=C\sqrt{n}}^{(\frac{1}{3}+\gamma)n}{ {\binom {(\frac{1}{3}+\gamma)n} {m}} } \leq n^{O(\sqrt{n})}+2^{(\frac{1}{3}+\gamma) \cdot n} \leq 2^{(\frac{1}{3}+2\gamma) \cdot n}.\]
Since the bound holds for every $\gamma >0$, the result follows.
\end{proof}

\subsection{The Tight Bound -- Proof of Theorem~\ref{thm:countingIntro}}\label{sec:tight}

In this section we estimate the number of sum-free sets $A \subseteq [n]$ with $2n+1 \notin \sum A$ and confirm Theorem~\ref{thm:countingIntro}.
Recall that a lower bound of $2^{\lfloor \frac{1}{3}(n+1) \rfloor}$ follows by considering all subsets of the set $B_n = [ \lceil \frac{2}{3}(n+1) \rceil , n]$.
For the upper bound, we prove the following stronger statement.

\begin{theorem}\label{thm:3A4A5A}
There are $O(2^{n/3})$ sum-free sets $A \subseteq [n]$ satisfying $2n+1 \notin (3A) \cup (4A) \cup (5A)$.
\end{theorem}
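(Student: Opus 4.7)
The plan is to combine a uniform-hypergraph encoding of the constraints with the Balogh--Morris--Samotij stability-to-counting framework (Item~\ref{itm:BMS_2} of Theorem~\ref{thm:BMS_items}), followed by a Janson-type count of the valid sets that lie close to $B_n$.

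First, I would consider the $5$-uniform hypergraph $\calH_n$ on vertex set $[n]$ whose hyperedges are the $5$-subsets of $[n]$ containing at least one set from $\calF^{(3)}_n \cup \calF^{(4)}_n \cup \calF^{(5)}_n$. Every sum-free $A \subseteq [n]$ with $2n+1 \notin (3A)\cup(4A)\cup(5A)$ is then an independent set in $\calH_n$, so it suffices to bound $|\calI(\calH_n)|$. A direct count gives $e(\calH_n) = \Theta(n^4)$ and $\Delta_\ell(\calH_n) = O(n^{\max(0,4-\ell)})$ for every $\ell \in [5]$, matching the degree hypothesis of Theorem~\ref{thm:BMS_items} with $p_n = C n^{-3/4}$. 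Next I would verify that $\calH$ is $(\tfrac{1}{3}, \{B_n\})$-stable (Definition~\ref{def:dense_stable}) using Theorem~\ref{thm:stabilityIntro}: the $\delta n^{k-1}$ forbidden $k$-sets it produces in a large $A$ each lift to $\Omega(n^{5-k})$ hyperedges of $\calH_n[A]$, giving $e(\calH_n[A]) \geq \delta' e(\calH_n)$.

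Item~\ref{itm:BMS_2} of Theorem~\ref{thm:BMS_items} would then yield: for every small $\gamma > 0$, at most $(1-\beta)^m \binom{n/3}{m}$ independent sets of size $m \geq C' n^{1/4}$ satisfy $|A \setminus B_n| \geq \gamma m$, so summing over $m$ bounds the total number of such ``far'' sets by $(2-\beta)^{n/3} = o(2^{n/3})$; independent sets of size below $C' n^{1/4}$ contribute only $2^{o(n)}$ trivially. It remains to count the valid $A$ with $|A \setminus B_n| \leq \gamma n$, which I split as $A = S \cup T$ with $S = A \cap B_n$ and $T = A \setminus B_n \subseteq [1, \lceil \tfrac{2(n+1)}{3} \rceil - 1]$. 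The case $T = \emptyset$ accounts for at most $2^{|B_n|} = O(2^{n/3})$ sets, exactly the main term, so it suffices to show that the $T \neq \emptyset$ contribution is $o(2^{n/3})$.

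For fixed $T \neq \emptyset$ I would apply Janson's inequality (Lemma~\ref{lemma:Janson}) to a uniformly random $R \subseteq B_n$ (each element included independently with probability $1/2$), taking $\calB$ to be the collection of forbidden pairs $\{s,s'\} \subseteq B_n$ produced, for each $t \in T$, by sum-freeness ($s + t = s'$, common difference $t$) and by the forbidden 3-sum ($s+s'+t = 2n+1$, common sum $2n+1-t$). Each $t$ contributes $\Omega(n)$ such pairs; distinct $t$'s produce families with different common difference or sum, so the total overlap is at most $O(|T|^2)$ and $\mu = \Omega(|T|n)$. A routine estimate showing each element of $B_n$ lies in $O(|T|)$ forbidden pairs gives $\Delta = O(|T|^2 n)$, whence $\mu^2/\Delta = \Omega(n)$ uniformly in $|T|$, and Janson yields that $R$ avoids $\calB$ with probability at most $e^{-c_0 n}$ for a universal $c_0 > 0$. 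Summing over $T$ via the entropy bound $\sum_{k=1}^{\gamma n} \binom{2n/3}{k} \leq 2^{(2n/3)H(3\gamma/2) + o(n)}$ and choosing $\gamma$ small enough that $(2/3) H(3\gamma/2) \ln 2 < c_0$, the contribution from $T \neq \emptyset$ is $o(2^{|B_n|}) = o(2^{n/3})$, completing the bound.

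The hard step is this last Janson estimate: one must carefully check that the pair families from different $t$'s stay essentially disjoint even as $|T|$ grows, that the $\Omega(n)$ pair count per $t$ is not destroyed by boundary effects when $t$ sits near $\lceil 2(n+1)/3 \rceil$, and that $\Delta$ is controlled uniformly in $|T|$. If these estimates turn out to be brittle for larger $|T|$, I would further restrict the possible shape of $T$ via Theorem~\ref{thm:Tran} on the structure of large sum-free subsets of $[n]$, or replace the crude $\binom{2n/3}{k}$ count on the choices of $T$ with the Green--Morris bound (Theorem~\ref{thm:partitions_GM}) on sets of integers with small sumset.
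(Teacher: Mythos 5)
Your first step (the $5$-uniform hypergraph plus Item~\ref{itm:BMS_2} of Theorem~\ref{thm:BMS_items} applied with the stability theorem) is exactly the paper's route to disposing of the sets with $|A \setminus B_n| \geq \gamma n$. One slip there: $\Delta_3(\calH_n) = \Theta(n^2)$, not $O(n)$, since a triple in $\calF^{(3)}_n$ lies in $\binom{n-3}{2}$ hyperedges; consequently the degree hypothesis forces $p_n = \Theta(n^{-1/2})$ rather than $n^{-3/4}$. This is harmless for the conclusion but your stated $p_n$ does not satisfy the hypothesis at $\ell = 3$.

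The genuine gap is in the second half. Your claim that each $t \in T$ contributes $\Omega(n)$ forbidden pairs inside $B_n$, and hence that each nonempty $T$ contributes at most $e^{-c_0 n} 2^{|B_n|}$ sets, is false for $t \in (\tfrac{n}{2}, \tfrac{2n}{3})$. For such $t$ the sum-freeness pairs $\{s, s+t\} \subseteq B_n$ do not exist at all (they require $t \leq n - \lceil\tfrac{2}{3}(n+1)\rceil < \tfrac{n}{3}$), and the pairs $\{s,s'\} \subseteq B_n$ with $s+s'=2n+1-t$ number only about $\tfrac{1}{2}(\tfrac{2n}{3}-t)$; all multi-sum constraints involving $t$ more than once force an element below $\min B_n$. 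So for $T = \{t\}$ with $t = \tfrac{2n}{3} - O(1)$ there are only $O(1)$ constraints on $S \subseteq B_n$, and this single $T$ already contributes $\Omega(2^{n/3})$ valid sets $A = S \cup T$ — they really are sum-free with $2n+1 \notin (3A)\cup(4A)\cup(5A)$. Hence your per-$T$ exponential saving fails by an exponential factor, and even the weaker assertion that the $T \neq \emptyset$ contribution is $o(2^{n/3})$ is false: summing over singletons $T=\{\tfrac{2n}{3}-j\}$ already gives $\Theta(2^{n/3})$. The theorem survives only because the savings, though merely $(3/4)^{\Theta(\sum_{t \in T}(\frac{2n}{3}-t)/|T|)}$-ish rather than $e^{-\Omega(n)}$, still sum to $O(2^{n/3})$ over all $T$ — but establishing that requires precisely the careful bookkeeping the paper does in Theorem~\ref{thm:Grn/2}: a Janson bound whose exponent is $k/(32\ell)$ with $k = \sum_{t}(\tfrac{2n}{3}-t)$ and $\ell=|T|$ (Claim~\ref{claim:1/2_prob}), combined with Lemma~\ref{lemma:partitions_basic} and the Green--Morris bound (Theorem~\ref{thm:partitions_GM}) to control the number of $T$ with given $k$, $\ell$, and $|T+T|$, plus Claim~\ref{claim:n/3-|S+S|} exploiting $(2n+1)-(T+T)$. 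The paper's decomposition reflects this: sets meeting $[\tfrac{n}{2}]$ are handled by an easy disjoint-pairs count of the kind you propose (Lemma~\ref{lemma:o(2^{n/3})}, where the $\Omega(n)$ pair count genuinely holds), while sets contained in $(\tfrac{n}{2}, n]$ — the case your argument cannot reach — require the separate Theorem~\ref{thm:Grn/2}, which is the technical core of the proof. Your closing remark correctly identifies where the danger lies, but the fallback you sketch is not a patch; it is essentially the entire missing argument.
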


\paragraph{A roadmap for the proof of Theorem~\ref{thm:3A4A5A}.}
In the proof, we count separately the sets that include `many' elements which do not belong to $B_n$ and the sets that are almost contained in $B_n$.
The former sets are considered in Section~\ref{sec:stability}, where we prove our stability result Theorem~\ref{thm:stabilityIntro}, combine it
with a result of~\cite{BaloghMS15} on counting independent sets in hypergraphs, and obtain the required bound (see Corollary~\ref{cor:far_from_extremal}).
To count the sets that are almost contained in $B_n$, we consider two subcases: The sets whose elements are all greater than $n/2$ are considered in Section~\ref{sec:n+1_3A} (see Corollary~\ref{cor:Larger_n/2}), and those that include at least one smaller element are considered in Section~\ref{sec:easy} (see Lemma~\ref{lemma:o(2^{n/3})}). We finally put everything together and derive Theorem~\ref{thm:3A4A5A} in Section~\ref{sec:final_proof}.

\begin{remark}\label{remark:divisibility}
For simplicity of presentation, we omit throughout this section floor and ceiling signs whenever the implicit assumption that a certain quantity is integer makes no essential difference in the argument.
\end{remark}

\subsubsection{Stability}\label{sec:stability}

We restate and prove our stability result (recall Definition~\ref{def:forbidden_sets}).

\begin{theorem}\label{thm:stability}
For every $\eps >0$ there exists $\delta = \delta(\eps) > 0$ such that for every sufficiently large integer $n$ the following holds.
Every set $A \subseteq [n]$ of size $|A| \geq (\frac{1}{3}-\delta) \cdot n$ contains at least $\delta \cdot n^{k-1}$ sets from $\calF^{(k)}_n$ for some $k \in \{3,4,5\}$ or satisfies $|A \setminus [\frac{2n}{3}+1,n]| \leq \eps n$.
\end{theorem}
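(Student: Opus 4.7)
The plan is to argue by contradiction. Fix $\eps>0$, choose $\delta>0$ small in terms of $\eps$, and suppose $A \subseteq [n]$ has $|A| \ge (\tfrac{1}{3}-\delta)n$ and contains fewer than $\delta\, n^{k-1}$ sets from $\calF_n^{(k)}$ for each $k \in \{3,4,5\}$. I aim to deduce $|A \setminus B_n| \le \eps n$, where $B_n = [\tfrac{2n}{3}+1,n]$. I first clean $A$ using Green's removal lemma: Corollary~\ref{cor:Green_sets} applied to the $x+y=z$ triples in $\calF_n^{(3)}$, together with Corollary~\ref{cor:3A4A_sets} applied three times with $\ell=2n+1$ and $k\in\{3,4,5\}$, produces a subset $B \subseteq A$ with $|A \setminus B| \le \tfrac{2\eps}{5}n$ that is sum-free and satisfies $2n+1 \notin (3B)\cup(4B)\cup(5B)$ exactly, provided $\delta$ is chosen below the thresholds of these corollaries at parameter $\eps/10$. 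Setting $\delta_1 = O(\delta+\eps)$, we have $|B| \ge (\tfrac{1}{3}-\delta_1)n$.

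Next, the doubling bound from the proof of Theorem~\ref{thm:n/3} (which uses only sum-freeness of $B$ together with $2n+1 \notin (3B)\cup(4B)$) gives $|B|+|B+B| \le n$, whence $|B+B| \le (\tfrac{2}{3}+\delta_1)n < 3|B|-4$ for $\delta_1$ small. Freiman's $3k-4$ theorem (Theorem~\ref{thm:Freiman}) then places $B$ inside an arithmetic progression $P$ of length $L \le (\tfrac{1}{3}+3\delta_1)n+1$, so $B$ misses at most $O(\delta_1 n)$ elements of $P$. Since $P \subseteq [n]$ and $L \ge (\tfrac{1}{3}-\delta_1)n$, the common difference $d$ of $P$ satisfies $d \le 3$.

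I then handle each $d \in \{1,2,3\}$ in turn. If $d=3$ and $P$ consists of multiples of $3$, then $B/3$ is a sum-free subset of $[n/3]$ of size close to $n/3$, violating the $\lceil m/2\rceil$ bound on sum-free subsets of $[m]$; otherwise $kB$ lies in a fixed residue class modulo $3$, and for the unique $k \in \{3,4,5\}$ whose residue matches $(2n+1)\bmod 3$ (determined by $n\bmod 3$), $P$ carries $\Omega(n^{k-1})$ ordered $k$-tuples summing to $2n+1$. Case $d=2$ is analogous: an even $P$ makes $B/2 \subseteq [n/2]$ sum-free with $|B/2|>\lceil n/4\rceil$, impossible; an odd $P$ automatically satisfies $2n+1 \notin 4B$ by parity, while $2n+1 \notin 3B$ combined with $|B|\approx n/3$ forces $\min B = O(\delta_1 n)$ and $\max B \le \tfrac{2n}{3}+O(\delta_1 n)$, placing $2n+1$ in the interior of $[5\min P,5\max P]$ with $\Omega(n^4)$ representations as a $5$-fold sum in $P$. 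For $d=1$, $B$ almost fills an interval $[a,b]$ with $b-a+1\approx n/3$; sum-freeness yields $b<2a+O(\delta_1 n)$, and $2n+1\notin 3B$ forces either $a\ge \tfrac{2n}{3}-O(\delta_1 n)$ or $b\le \tfrac{2n}{3}+O(\delta_1 n)$. The first subcase gives $B\subseteq[\tfrac{2n}{3}-O(\delta_1 n),n]$, hence $|A\setminus B_n|\le\eps n$ as required; in the second subcase $[a,b]\subseteq[\tfrac{n}{3}-O(\delta_1 n),\tfrac{2n}{3}+O(\delta_1 n)]$, placing $2n+1$ in the interior of $[4a,4b]$ with $\Omega(n^3)$ representations as a $4$-fold sum in $P$.

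The quantitative heart of the argument is the supersaturation step shared by every contradiction subcase above: whenever $2n+1$ lies in the interior of $[k\min P,k\max P]$, the progression $P$ admits $\Omega(n^{k-1})$ ordered $k$-tuples summing to $2n+1$, and each element of $P$ appears in at most $O(n^{k-2})$ of these tuples, so the $O(\delta_1 n)$ elements of $P\setminus B$ destroy only $O(\delta_1 n^{k-1})$ tuples. For $\delta_1$ small in terms of the implicit constants, $\Omega(n^{k-1})$ tuples survive in $B^k$, forcing $2n+1 \in kB$ and contradicting the cleanup step. Making this count uniform in $(d,\min P,\max P)$, and tuning $\delta$ small enough in $\eps$ so that all such contradictions trigger before one reaches the good $d=1$ subcase, is the bulk of the work and yields the required function $\delta=\delta(\eps)$.
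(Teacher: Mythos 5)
Your proposal is correct and follows essentially the same route as the paper: clean up via Green's removal lemma, bound $|A'|+|A'+A'|\le n$ to invoke Freiman's $3k-4$ theorem, and analyze the common difference $d\in\{1,2,3\}$ with the same contradictions in each case (density of sum-free sets for the ``$d\mid$ everything'' subcases, and counting $k$-tuples summing to $2n+1$ otherwise). The only cosmetic difference is in the $d=1$ case, where you split on the $3B$-condition first and finish the interior subcase by counting $4$-fold representations, rather than the paper's sequential interval-disjointness pushes using $4A''$ then $3A''$; also note that since your error parameter $\delta_1$ contains an $\eps$-term, the final ``tuning'' step needs the paper's standing assumption that $\eps$ itself may be taken sufficiently small.
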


We need the following simple lemma.

\begin{lemma}\label{lemma:stability_3A}
For $\eps >0$ and a sufficiently large integer $n$, let $A \subseteq [n]$ be a set of size $|A| \geq ( \frac{2}{3} - \eps) \cdot n$ satisfying either $2n+2 \notin 3A$ or $2n+1 \notin 3A$. Then, $|A \cap [\frac{2n}{3}+1,n]| \leq 3\eps n$.
\end{lemma}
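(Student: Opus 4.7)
The plan is a short pigeonhole/sumset argument. Split $A = L \cup U$ where $L = A \cap [1, \lfloor 2n/3 \rfloor]$ and $U = A \cap [\lfloor 2n/3 \rfloor + 1, n]$, so $|L| + |U| = |A| \geq (\tfrac{2}{3} - \eps) n$. The goal is to show that if $|U|$ is larger than roughly $\eps n$, then both $2n+1$ and $2n+2$ must lie in $3A$, which will contradict the hypothesis.

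The key observation is that for any $u_1, u_2 \in U$ we have $u_1 + u_2 \in [\tfrac{4n}{3}+2, 2n]$, so $(2n+1) - (U+U) \subseteq [1, \tfrac{2n}{3}-1]$ and $(2n+2) - (U+U) \subseteq [2, \tfrac{2n}{3}]$. If an element of $L$ (equivalently, of $A$) lies in the shifted set $(2n+1) - (U+U)$, then $2n+1 \in 3A$; likewise for $2n+2$. Hence, under the hypothesis $2n+1 \notin 3A$ (resp.\ $2n+2 \notin 3A$), the set $L$ is disjoint from $S := (2n+1) - (U+U)$ (resp.\ from $S' := (2n+2) - (U+U)$). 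By Claim~\ref{claim:Freiman}, $|S|, |S'| \geq 2|U| - 1$.

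Now both $L$ and $S$ (resp.\ $S'$) are contained in $[1, \lfloor 2n/3 \rfloor]$, so their disjointness yields
\[
|L| + 2|U| - 1 \;\leq\; |L| + |S| \;\leq\; \tfrac{2n}{3},
\]
i.e.\ $|L| + 2|U| \leq \tfrac{2n}{3} + 1$. Subtracting the lower bound $|L| + |U| \geq (\tfrac{2}{3} - \eps) n$ gives $|U| \leq \eps n + 1$, which for sufficiently large $n$ is at most $3 \eps n$. The argument is identical in the two cases $2n+2 \notin 3A$ and $2n+1 \notin 3A$, up to the trivial shift of the forbidden target.

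I don't foresee a genuine obstacle here; the only points to watch are the (harmless) floor/ceiling issues in the interval boundaries, and the care to note that the definition of $3A$ in the paper permits repeated summands, so one does not need to worry about $u_1 = u_2$ or $a = u_i$ in extracting a triple summing to $2n+1$ or $2n+2$.
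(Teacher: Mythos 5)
Your argument is correct (modulo the floor/ceiling slack you already flag, plus the trivial degenerate case $U=\emptyset$, which must be set aside before invoking Claim~\ref{claim:Freiman} since that claim requires nonempty sets), but it is not the paper's argument. The paper argues by contradiction and anchors on the single element $\max(A)=\frac{2n}{3}+a$: it exhibits $\lceil\frac{3a-2}{2}\rceil$ pairwise disjoint pairs $\{x,y\}$ in an interval just below $\max(A)$ with $x+y+\max(A)$ equal to the forbidden value, so each pair loses at least one element from $A$, forcing $|A|<(\frac{2}{3}-\eps)n$. You instead use the entire upper part $U$ at once: the sumset bound $|U+U|\geq 2|U|-1$ shows that the set $(2n+1)-(U+U)$, which lives in the lower two-thirds and must be disjoint from $A$, occupies at least $2|U|-1$ slots there, and a single counting inequality $|L|+2|U|-1\leq \frac{2n}{3}$ finishes. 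The two proofs exploit the same mechanism (a forbidden triple sum forces exclusions in the lower range), but your version is arguably cleaner, avoids the case analysis on where $\max(A)$ sits, and yields the sharper conclusion $|U|\leq \eps n+O(1)$ rather than $3\eps n$; the paper's version has the minor advantage of only needing one element of $U$ rather than the sumset inequality, but since Claim~\ref{claim:Freiman} is already in the toolkit this costs nothing.
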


\begin{proof}
We prove the lemma under the assumption $2n+2 \notin 3A$. The case of $2n+1 \notin 3A$ is similar.
Assume by contradiction that $|A \cap [\frac{2n}{3}+1,n]| > 3\eps n$. Denote $\max(A) = \frac{2n}{3}+a$, and notice that $a \in (3\eps n ,\frac{n}{3}]$.
Consider the interval $[\frac{2n}{3}-2a+2,\frac{2n}{3}+a]$ of length $3a-1$, and observe that it contains $\lceil \frac{3a-2}{2} \rceil$ pairwise disjoint sets $\{x,y\}$ of distinct $x,y$ satisfying $x+y = \frac{4n}{3}-a+2$. Since $\frac{2n}{3}+a \in A$ and $2n+2 \notin 3A$, it follows that at least one element from every such set does not belong to $A$, hence
\[ |A| \leq \max(A) - \Big \lceil \frac{3a-2}{2} \Big\rceil = \frac{2n}{3} - \Big\lfloor \frac{a-1}{2} \Big\rfloor < \Big (\frac{2}{3}-\eps \Big ) \cdot n,\]
in contradiction.
\end{proof}

\begin{proof}[ of Theorem~\ref{thm:stability}]
For a given $\eps > 0$ define
\[\delta = \min \Big( \delta' \Big(\frac{\eps}{96} \Big), \delta''_3 \Big(\frac{\eps}{96} \Big), \delta''_4 \Big(\frac{\eps}{96} \Big), \delta''_5 \Big(\frac{\eps}{96} \Big), \frac{\eps}{24} \Big),\]
where $\delta'$, $\delta''_3$, $\delta''_4$, and $\delta''_5$ are as in Corollaries~\ref{cor:Green_sets} and~\ref{cor:3A4A_sets}.
Notice that it can be assumed, whenever needed, that $\eps$ is sufficiently small (because the statement of the theorem is stronger for smaller values of $\eps$).
For a sufficiently large integer $n$, let $A \subseteq [n]$ be a set of size $|A| \geq (\frac{1}{3}-\delta) \cdot n$.
Assume that for each $k \in \{3,4,5\}$, fewer than $\delta \cdot n^{k-1}$ of the sets in $\calF^{(k)}_n$ are contained in $A$.
Our goal is to prove that $|A \setminus [\frac{2n}{3}+1,n]| \leq \eps n$.

We first apply Corollaries~\ref{cor:Green_sets} and~\ref{cor:3A4A_sets}, once and three times respectively, to obtain a set $A' \subseteq A$ such that $A'$ is sum-free and satisfies $2n+1 \notin (3A') \cup (4A') \cup (5A')$ and $|A \setminus A'| \leq 4 \cdot \frac{\eps}{96} \cdot n = \frac{\eps n}{24}$. Observe that
\begin{eqnarray}\label{eq:|A'|}
|A'| = |A| - |A \setminus A'| \geq \Big(\frac{1}{3}-\delta-\frac{\eps}{24} \Big ) \cdot n \geq \Big (\frac{1}{3}-\frac{\eps}{12} \Big) \cdot n,
\end{eqnarray}
and that, by Theorem~\ref{thm:n/3}, we have $|A'| \leq \frac{n}{3}$ (recall Remark~\ref{remark:divisibility}).

Consider the set $C = A' \cup (A'+A') \subseteq [2n]$.
Observe that $|C| \leq n$, as otherwise, by the pigeonhole principal, there exists $1 \leq i \leq n$ for which $\{i, 2n+1-i\} \subseteq C$, in contradiction to $2n+1 \notin (3A') \cup (4A')$.
By the sum-freeness of $A'$ and~\eqref{eq:|A'|}, it follows that
\[ |A'| + |A' + A'| = |C| \leq n \leq \frac{|A'|}{\frac{1}{3}-\frac{\eps}{12}} = \frac{3 |A'|}{1-\frac{\eps}{4}} \leq 3 |A'| \cdot \Big (1+\frac{\eps}{3} \Big).\]
Rearranging, we obtain that
\[|A'+A'| \leq |A'| \cdot (2+\eps ) \leq 3|A'|-4.\]
Hence we can apply Theorem~\ref{thm:Freiman} to obtain that $A'$ is contained in an arithmetic progression of length at most
\[|A'+A'|-|A'|+1 \leq |A'| \cdot (1+\eps )+1 \leq \frac{n}{3} \cdot (1+ \eps )+1.\]
It follows that one can exclude at most $\frac{\eps n}{3} +1$ elements from $A'$ to get a subset $A'' \subseteq A'$ contained in an arithmetic progression of length precisely $\frac{n}{3}$ such that
\begin{eqnarray}\label{eq:|AminusA''|}
|A \setminus A''| \leq |A \setminus A'|+ \frac{\eps n}{3} +1 \leq  \frac{\eps n}{24} + \frac{\eps n}{3} +1 = \frac{3\eps n}{8} +1
\end{eqnarray}
and, using~\eqref{eq:|A'|},
\begin{eqnarray}\label{eq:|A''|}
|A''| = |A| - |A \setminus A''| \geq \Big(\frac{1}{3}-\frac{\eps}{24}\Big) \cdot n - \Big ( \frac{3\eps n}{8} +1 \Big ) \geq \Big(\frac{1}{3}-\frac{\eps}{2} \Big) \cdot n.
\end{eqnarray}
Since $A'' \subseteq A'$, we have $2n+1 \notin (3A'') \cup (4A'') \cup (5A'')$.

Let $B$ denote an arithmetic progression of length $\frac{n}{3}$ containing the set $A''$, and let $d$ denote its difference.
Observe that $d \in \{1,2,3\}$, as otherwise $|B| \leq \frac{n}{4}$, in contradiction to the bound given in~\eqref{eq:|A''|} on the size of $A'' \subseteq B$.
We consider below each of the three possible values of $d$ separately.
\begin{enumerate}
  \item $d=1$: In this case $B = [a,a+\frac{n}{3}-1]$ for some $a \in[1,\frac{2n}{3}+1]$. It suffices to show that $a \geq (\frac{2}{3}-\frac{\eps}{2}) \cdot n$, as this implies that $|A'' \setminus [\frac{2n}{3}+1,n]| \leq \frac{\eps n}{2}+1$, which using~\eqref{eq:|AminusA''|}, yields that
    \[ \Big |A  \setminus \Big[\frac{2n}{3}+1,n \Big] \Big | \leq \Big (\frac{3\eps n}{8} +1 \Big ) + \Big (\frac{\eps n}{2}+1 \Big ) \leq \eps \cdot n,\] as desired.

    By $A'' \subseteq B$, we have that $A''+A'' \subseteq B+B = [2a,2a+\frac{2n}{3}-2]$.
    Recall that $|A''| \geq (\frac{1}{3}-\frac{\eps}{2} ) \cdot n$, hence by Claim~\ref{claim:Freiman} it follows that
    $|A''+A''| \geq 2|A''|-1 \geq (\frac{2}{3}-\eps) \cdot n-1$.
We conclude that $A''$ includes all but at most $\frac{\eps n}{2}$ of the elements of $B$, and that $A''+A''$ includes all but at most $\eps n$ of the elements of $B+B$.
    Since $A''$ is sum-free, it follows that the intervals $B$ and $B+B$ intersect at no more than $\frac{\eps n}{2}+ \eps n= \frac{3\eps n}{2}$ elements, implying that
$a+\frac{n}{3} \leq 2a + \frac{3\eps n}{2}$, thus $a \geq (\frac{1}{3} - \frac{3\eps}{2}) \cdot n$.

We next use the fact that $2n+1 \notin 4A''$, which implies that $A''$ is disjoint from
\[(2n+1) - 3A'' \subseteq (2n+1)-3B = [n-3a+4,2n-3a+1].\]
By Claim~\ref{claim:Freiman} and~\eqref{eq:|A''|}, $|3A''| \geq 3|A''|-2 \geq  (1-\frac{3\eps}{2}) \cdot n-2$.
Therefore, the set $(2n+1)-3A''$ includes all but at most $\frac{3 \eps n}{2}$ of the elements of $(2n+1)-3B$.
This implies that the intervals $B$ and $(2n+1)-3B$ intersect at no more than $\frac{\eps n}{2} + \frac{3\eps n}{2} = 2\eps n$ elements, hence either \[a+\frac{n}{3} \leq n-3a+4 +2\eps n~~~\mbox{or}~~~2n-3a+2 \leq a + 2\eps n.\]
The first possibility is ruled out using $a \geq (\frac{1}{3} - \frac{3\eps}{2}) \cdot n$, so we derive that $a \geq (\frac{1}{2} - \frac{\eps}{2}) \cdot n$.

Finally, we use the fact that $2n+1 \notin 3A''$, which implies that $A''$ is disjoint from
\[(2n+1) - 2A'' \subseteq (2n+1)-2B = \Big [ \frac{4n}{3}-2a+3,2n-2a+1 \Big].\]
Recalling that $|A''+A''| \geq (\frac{2}{3}-\eps) \cdot n-1$, the set $(2n+1)-2A''$ includes all but at most $\eps n$ of the elements of $(2n+1)-2B$.
It follows that the intervals $B$ and $(2n+1)-2B$ intersect at no more than $\frac{\eps n}{2} + \eps n = \frac{3\eps n}{2}$ elements, hence either
\[a+\frac{n}{3} \leq \frac{4n}{3}-2a+3 + \frac{3\eps n}{2}~~~\mbox{or}~~~2n-2a+2 \leq  a+ \frac{3\eps n}{2}.\]
The first possibility is ruled out using $a \geq (\frac{1}{2} - \frac{\eps}{2}) \cdot n$, so we derive that $a \geq (\frac{2}{3} - \frac{\eps}{2}) \cdot n$, and we are done.

  \item $d=2$: In this case the elements of $A''$ are all even or all odd. We show that this is impossible.
In the former alternative, the set $D_0 = \{k \mid 2k \in A''\}$ is a sum-free subset of $[\frac{n}{2}]$. However, the largest size of a sum-free subset of $[n']$ is $\lceil \frac{n'}{2} \rceil$, implying $|A''| = |D_0| \leq \frac{n}{4}$, in contradiction to~\eqref{eq:|A''|}.
For the latter alternative, where the elements of $A''$ are all odd, consider the set $D_1 = \{k \mid 2k-1 \in A'' \} \subseteq [n']$ where $n' = \frac{n}{2}$ (recall Remark~\ref{remark:divisibility}), which by~\eqref{eq:|A''|} satisfies
\[|D_1| = |A''| \geq \Big(\frac{1}{3}-\frac{\eps}{2} \Big) \cdot n = \Big(\frac{2}{3}-\eps \Big) \cdot n'.\]
Observe that the fact that $2n+1 \notin 3A''$ implies that $n+2 \notin 3D_1$, that is, $2n'+2 \notin 3D_1$. By Lemma~\ref{lemma:stability_3A}, all but at most $3\eps n'$ of the elements of $D_1$ are in $[\frac{2n'}{3}]$, and thus all but at most $\frac{3\eps n}{2}$ of the elements of $A''$ are in $[\frac{2n}{3}]$.
Let $E$ denote the set of odd integers in $[\frac{2n}{3}]$ and note that $|E| = \frac{n}{3}$.
Using~\eqref{eq:|A''|}, it follows that there exists a set $F \subseteq E$ satisfying $E \setminus F \subseteq A''$ and $|F| \leq \frac{\eps n}{2} + \frac{3\eps n}{2} = 2\eps n$. To obtain a contradiction we show that $2n+1 \in 5A''$.
Indeed, for every odd integers $x_1,x_2,x_3,x_4 \in [\frac{n}{3}+1,\frac{n}{2}]$ there exists $x_5 \in E$ such that $\sum_{i=1}^{5}{x_i} = 2n+1$, hence there are at least $(\frac{n}{12})^4$ $5$-tuples of elements of $E$ with sum $2n+1$. However, at most $5|F| \cdot (\frac{n}{3})^3 < \eps n^4$ of them involve elements of $F$, so assuming that $\eps$ is sufficiently small, there must exist a $5$-tuple of elements of $E \setminus F \subseteq A''$ whose sum is $2n+1$, as desired.

  \item $d=3$: In this case all the elements of $A''$ are congruent to $r$ modulo $3$ for some $r \in \{0,1,2\}$. We show that this is impossible.
  If $r=0$ then the set $\{k \mid 3k \in A''\}$ is a sum-free subset of $[\frac{n}{3}]$, hence $|A''| \leq \frac{n}{6}$, in contradiction to~\eqref{eq:|A''|}. So assume that $r \in \{1,2\}$.
  Denote by $[n]_r$ the set of integers in $[n]$ which are congruent to $r$ modulo $3$, and notice, using~\eqref{eq:|A''|}, that
  there exists a set $F \subseteq [n]_r$ satisfying  $[n]_r \setminus F \subseteq A''$ and $|F| \leq \frac{\eps n}{2}$.
  For the given $n$ and $r \in \{1,2\}$, let $t$ be the unique integer in $\{3,4,5\}$ satisfying $t \cdot r = 2n+1~(\mod~3)$.
To obtain a contradiction we show that $2n+1 \in tA''$.
By the definition of $t$, for every $x_1,\ldots,x_{t-1} \in [n]_r \cap [\frac{n}{t-1}+1,\frac{2n}{t-1}]$ there exists $x_t \in [n]_r$ such that $\sum_{i=1}^{t}{x_i} = 2n+1$,
hence there are at least $(\frac{n}{3(t-1)})^{t-1}$ $t$-tuples of elements of $[n]_r$ with sum $2n+1$. However, at most $t|F| \cdot (\frac{n}{3})^{t-2} < \eps n^{t-1}$ of them involve elements of $F$, so assuming that $\eps$ is sufficiently small, there must exist a $t$-tuple of elements of $[n]_r \setminus F \subseteq A''$ whose sum is $2n+1$, as desired.
\end{enumerate}
The proof is completed.
\end{proof}

Theorem~\ref{thm:stability} combined with a result of~\cite{BaloghMS15} on counting independent sets in hypergraphs (see Section~\ref{sec:independent}) gives us the following corollary.

\begin{corollary}\label{cor:far_from_extremal}
For every $\gamma >0$, there are $o(2^{n/3})$ sum-free sets $A \subseteq [n]$ satisfying
\[2n+1 \notin (3A) \cup (4A) \cup (5A) \mbox{~~and~~} \Big |A \setminus \Big [\frac{2n}{3}+1,n \Big ] \Big| \geq \gamma n.\]
\end{corollary}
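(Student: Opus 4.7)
The plan is to recast the problem as counting independent sets in a suitable hypergraph and invoke the transfer theorem of Balogh, Morris, and Samotij stated as Theorem~\ref{thm:BMS_items}, fed with our stability result Theorem~\ref{thm:stability}. Concretely, I would work with the sequence $\calH = (\calH_n)_{n \in \N}$ of $5$-uniform hypergraphs on vertex set $[n]$ whose hyperedges are precisely the $5$-subsets of $[n]$ that contain at least one set from $\calF^{(3)}_n \cup \calF^{(4)}_n \cup \calF^{(5)}_n$. The sum-free subsets $A \subseteq [n]$ satisfying $2n+1 \notin (3A) \cup (4A) \cup (5A)$ are exactly the independent sets of $\calH_n$, so it suffices to count those of size $m$ whose distance from $B_n = [\frac{2n}{3}+1,n]$ is at least $\gamma n$.

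The first task is to verify the hypotheses of Theorem~\ref{thm:BMS_items} with $p_n = 1/\sqrt{n}$. Using $|\calF^{(k)}_n| = \Theta(n^{k-1})$ for $k \in \{3,4,5\}$, a direct count gives $e(\calH_n) = \Theta(n^4)$, and a routine case analysis on the size of the forbidden configuration sitting inside a hyperedge yields $\Delta_1(\calH_n) = O(n^3)$, $\Delta_2(\calH_n),\Delta_3(\calH_n) = O(n^2)$, $\Delta_4(\calH_n) = O(n)$, and $\Delta_5(\calH_n) = O(1)$; these all satisfy $\Delta_\ell(\calH_n) \leq c \cdot p_n^{\ell-1} \cdot e(\calH_n)/v(\calH_n)$ for a suitable constant $c$. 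Setting $\calB_n = \{B_n\}$, the next step is to verify that $\calH$ is $(\frac{1}{3},\calB)$-stable: if $|A| \geq (\frac{1}{3}-\delta)n$ and $|A \setminus B_n| > \eps n$, then Theorem~\ref{thm:stability} supplies $\delta n^{k-1}$ sets from $\calF^{(k)}_n$ inside $A$ for some $k \in \{3,4,5\}$, and each such set extends within $A$ to an induced hyperedge in $\binom{|A|-k}{5-k} = \Omega(n^{5-k})$ ways, giving $e(\calH_n[A]) = \Omega(n^4) = \Omega(e(\calH_n))$ after accounting for the constant-factor overcount.

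Applying Item~\ref{itm:BMS_2} of Theorem~\ref{thm:BMS_items} with parameter $\gamma$ then yields constants $\beta, C > 0$ such that for every $m \geq C\sqrt{n}$, the number of independent sets $I \in \calI(\calH_n,m)$ with $|I \setminus B_n| \geq \gamma m$ is at most $(1-\beta)^m \binom{n/3}{m}$. Since any valid $A$ has $|A| \leq \frac{n+1}{3}$ by Theorem~\ref{thm:n/3}, the hypothesis $|A \setminus B_n| \geq \gamma n$ implies $|A \setminus B_n| \geq \gamma m$, so such sets all fall under the above bound. Hence the total number of valid sets satisfying $|A \setminus B_n| \geq \gamma n$ is at most
\[ \sum_{m=0}^{C\sqrt{n}} \binom{n}{m} + \sum_{m \geq C\sqrt{n}} (1-\beta)^m \binom{n/3}{m} \leq 2^{O(\sqrt{n}\log n)} + (2-\beta)^{n/3} = o(2^{n/3}),\]
using the identity $\sum_{m=0}^{N}(1-\beta)^m \binom{N}{m} = (2-\beta)^N$ together with $(2-\beta)^{n/3} = 2^{n/3} \cdot (1-\beta/2)^{n/3}$.

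The main technical step I expect to be the bottleneck is the stability verification for $\calH$, which requires converting the ``many forbidden sets'' conclusion of Theorem~\ref{thm:stability} into the ``positive fraction of induced hyperedges'' conclusion needed by Definition~\ref{def:dense_stable}; the key point is that each forbidden $k$-set in $A$ lies in $\Omega(n^{5-k})$ induced $5$-edges, so the counts coming from different values of $k$ all multiply up to the uniform order $\Omega(n^4)$. Once this and the routine co-degree bounds are confirmed, the machinery of~\cite{BaloghMS15} does the remaining work.
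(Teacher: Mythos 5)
Your proposal is correct and follows essentially the same route as the paper's own proof: the identical $5$-uniform hypergraph, the same co-degree bounds with $p_n = 1/\sqrt{n}$, the same verification of $(\frac{1}{3},\calB)$-stability via Theorem~\ref{thm:stability}, and the same application of Item~\ref{itm:BMS_2} of Theorem~\ref{thm:BMS_items}. The only differences are cosmetic, e.g.\ your closed form $(2-\beta)^{n/3}$ for the tail sum versus the paper's bound $(1-\beta)^{C\sqrt{n}} \cdot 2^{n/3}$, both of which give $o(2^{n/3})$.
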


\begin{proof}
Let $\calH = (\calH_n)_{n \in \N}$ be a sequence of $5$-uniform hypergraphs defined as follows. For every $n$, $\calH_n$ is the hypergraph on the vertex set $[n]$ whose hyperedges are all $5$-subsets of $[n]$ that contain at least one of the sets in $\calF^{(3)}_n \cup \calF^{(4)}_n \cup \calF^{(5)}_n$.
Observe that $e(\calH_n) = \Theta(n^4)$ and that $\Delta_1(\calH_n) = \Theta(n^3)$, $\Delta_2(\calH_n) = \Theta(n^2)$, $\Delta_3(\calH_n) = \Theta(n^2)$, $\Delta_4(\calH_n) = \Theta(n)$, and $\Delta_5(\calH_n) = 1$.
Hence, for every sufficiently large $n$, every $\ell \in [5]$, some constant $c>0$, and $p_n = 1/\sqrt{n}$, we have
\[ \Delta_\ell(\calH_n) \leq c \cdot p_n^{\ell-1} \cdot \frac{e(\calH_n)}{v(\calH_n)}.\]
Since every sum-free set $A \subseteq [n]$ satisfying $2n+1 \notin (3A) \cup (4A) \cup (5A)$ forms an independent set in $\calH_n$, it suffices to bound from above, for a given $\gamma >0$, the number of sets $I \in \calI(\calH_n)$ satisfying $|I \setminus [\frac{2n}{3}+1,n]| \geq \gamma n$.

To this end, let us show that the sequence of hypergraphs $\calH$ is $(\frac{1}{3},\calB)$-stable, where $\calB = (\calB_n)_{n \in \N}$ is defined by $\calB_n = \{ [\frac{2n}{3}+1,n] \}$ (recall Definition~\ref{def:dense_stable}, Item~\ref{itm:stable_def}). Let $\eps >0$ be a constant. By Theorem~\ref{thm:stability}, for some $\delta>0$ and every sufficiently large integer $n$, every set $A \subseteq [n]$ of size $|A| \geq (\frac{1}{3}-\delta) \cdot n$ contains at least $\delta \cdot n^{k-1}$ sets from $\calF^{(k)}_n$ for some $k \in \{3,4,5\}$ or satisfies $|A \setminus [\frac{2n}{3}+1,n]| \leq \eps \cdot n$. In the former case we have, using $e(\calH_n) = \Theta(n^4)$, that $e(\calH_n[A]) \geq \delta' \cdot e(\calH_n)$ for some $\delta' >0$, as required.

Now, we can apply Item~\ref{itm:BMS_2} of Theorem~\ref{thm:BMS_items} to obtain that for every $\gamma >0$ there exist $\beta >0$ and $C>0$ such that for every sufficiently large $n$ and every $m \geq C p_n v(\calH_n) = C \sqrt{n}$, there are at most
$(1-\beta)^m \cdot {\binom {n/3} {m}}$
sets $I \in \calI(\calH_n,m)$ satisfying $|I \setminus [\frac{2n}{3}+1,n]| \geq \gamma m$.
In particular, this bound holds on the number of sets $I \in \calI(\calH_n,m)$ satisfying $|I \setminus [\frac{2n}{3}+1,n]| \geq \gamma n$, hence
the total number of sets $I \in \calI(\calH_n)$ satisfying $|I \setminus [\frac{2n}{3}+1,n]| \geq \gamma n$ is at most
\[ \sum_{m=0}^{C \sqrt{n}}{\binom {n} {m}} + \sum_{m=C \sqrt{n}}^{n/3}{ (1-\beta)^m \cdot {\binom {n/3} {m}} }
\leq n^{O(\sqrt{n})} + (1-\beta)^{C \sqrt{n}} \cdot \sum_{m=0}^{n/3}{{\binom {n/3} {m}}} \leq o(2^{n/3}),\]
so we are done.
\end{proof}

\subsubsection{Sets with No Small Elements}\label{sec:n+1_3A}

Our goal in this section is to prove an upper bound on the number of sets $A \subseteq [n]$, all of whose elements are greater than $\frac{n}{2}$, that satisfy $2n+1 \notin 3 A$. To do so, we prove the following theorem, which yields the required bound as an easy corollary. (Recall Remark~\ref{remark:divisibility}.)

\begin{theorem}\label{thm:Grn/2}
There are $O(2^{2n/3})$ sets $A \subseteq [n]$ satisfying $n+1 \notin 3A$.
\end{theorem}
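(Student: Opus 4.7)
The plan is to decompose each $A$ relative to the ``extremal'' interval $B := [\lceil (n+2)/3 \rceil, n]$, which has size $|B| = \lfloor 2n/3 \rfloor$. Every subset of $B$ already satisfies $n+1 \notin 3A$ (any three elements of $B$ sum to at least $n+2$), which gives the matching $\Omega(2^{2n/3})$ lower bound. I therefore only need to bound the contribution of sets $A$ having elements below $\lceil (n+2)/3\rceil$ as well.

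For any $A$ with $n+1 \notin 3A$, write $A = A_s \sqcup A_\ell$ with $A_s := A \cap [1, \lfloor n/3 \rfloor]$ and $A_\ell := A \cap B$. The ``one-from-$A_s$, two-from-$A_\ell$'' triple constraint forces $n+1-x \notin A_\ell + A_\ell$ for every $x \in A_s$. Using only this (weaker) restriction I get
\[
\#\{A\} \;\leq\; \sum_{A_\ell \subseteq B} 2^{|F(A_\ell)|}, \qquad F(A_\ell) := \{x \in [1, \lfloor n/3 \rfloor] : n+1-x \notin A_\ell + A_\ell\}.
\]
Under the bijection $x \mapsto n+1-x$, one has $|F(A_\ell)| = \lfloor n/3 \rfloor - |T(A_\ell)|$ where $T(A_\ell) := (A_\ell + A_\ell) \cap [2n/3+1, n]$. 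Writing the right-hand side as $2^{\lfloor n/3\rfloor}\cdot 2^{|B|}\cdot \mathbb{E}[2^{-|T(A_\ell)|}]$ with $A_\ell$ uniform in $2^B$, it suffices to prove $\mathbb{E}[2^{F'(A_\ell)}] = O(1)$, where $F'(A_\ell) := |[2n/3+1, n]| - |T(A_\ell)|$ counts the elements of $[2n/3+1, n]$ missed by $A_\ell + A_\ell$.

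To bound $\mathbb{E}[2^{F'(A_\ell)}]$, I partition $A_\ell$ by its doubling $|A_\ell + A_\ell|/|A_\ell|$. For $A_\ell$ of \emph{small} doubling, Theorem~\ref{thm:partitions_GM} (Green--Morris) controls the number of such sets of each size; combined with the trivial estimate $F'(A_\ell) \leq \lfloor n/3\rfloor$, their aggregate contribution to $\sum_{A_\ell} 2^{F'(A_\ell)}$ is only $2^{(1/3 + o(1))n}$, which vanishes after division by $2^{|B|}$. For $A_\ell$ of \emph{large} doubling, I apply Janson's inequality (Lemma~\ref{lemma:Janson}) to the ``bad'' events $\{y, z \in A_\ell\}$ ranging over pairs $(y, z) \in B^2$ whose sum lies in a prescribed $F'' \subseteq [2n/3+1, n]$: with $\mu = \Theta(\sum_{f \in F''} k_f)$, where $k_f$ counts pairs in $B^2$ summing to $f$, and $\Delta$ controlled by pair-sharing across different $f$, Janson yields an exponential bound on $P(A_\ell + A_\ell \cap F'' = \emptyset)$. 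Summing over $F''$ of size $t$ produces a geometric tail $P(F'(A_\ell) \geq t) \leq C \alpha^{t}$ with $\alpha < 1/2$, so $\mathbb{E}[2^{F'(A_\ell)}] \leq \sum_t (2\alpha)^{t} = O(1)$ as required.

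The main obstacle is the calibration of constants in Janson's inequality: after union-bounding over $\binom{\lfloor n/3 \rfloor}{t}$ choices of $F''$, the Janson exponent must overcome both this binomial factor and the weight $2^t$ in $\mathbb{E}[2^{F'}]$ in order to yield $\alpha<1/2$. This calibration is feasible only once the small-doubling $A_\ell$ are removed by Green--Morris, since when $A_\ell$ is contained in a short arithmetic progression the pairs $(y, z)$ summing to different $f$ share too many vertices, inflating $\Delta$ and breaking the Janson estimate. Combining the two regimes then yields $\mathbb{E}[2^{F'(A_\ell)}] = O(1)$, and hence $\#\{A\} = O(2^{\lfloor 2n/3 \rfloor}) = O(2^{2n/3})$.
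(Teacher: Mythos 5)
Your reduction breaks at the key claim $\mathbb{E}\bigl[2^{F'(A_\ell)}\bigr]=O(1)$: this expectation is in fact exponentially large. Take the event that $A_\ell$ avoids $[\lceil (n+2)/3\rceil,\lfloor n/2\rfloor]$, which has probability $2^{-n/6+O(1)}$ under the uniform measure on subsets of $B$. On this event $A_\ell\subseteq[\frac{n}{2}+1,n]$, so $A_\ell+A_\ell\subseteq[n+2,2n]$ misses all of $[\frac{2n}{3}+1,n]$ and $F'(A_\ell)=\frac{n}{3}+O(1)$. Hence $\mathbb{E}\bigl[2^{F'(A_\ell)}\bigr]\geq 2^{-n/6+O(1)}\cdot 2^{n/3}=2^{n/6-O(1)}$, and no calibration of Janson's inequality can rescue a false statement (your claimed tail $\Pr[F'\geq t]\leq C\alpha^t$ with $\alpha<\frac12$ would give $\Pr[F'\geq \frac{n}{3}]\leq C2^{-n/3}$, contradicting the $2^{-n/6}$ above). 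Equivalently, your starting bound $\#\{A\}\leq\sum_{A_\ell\subseteq B}2^{|F(A_\ell)|}$ is itself at least $2^{n/2}\cdot 2^{n/3}=2^{5n/6}$, since each of the $2^{n/2}$ sets $A_\ell\subseteq[\frac{n}{2}+1,n]$ imposes no constraint at all on $A_s$. So the approach cannot yield $O(2^{2n/3})$.

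The root cause is that you retain only the triples with one element in $A_s$ and two in $A_\ell$, discarding the triples with \emph{two} small elements and one large one; the latter are essential. The paper's proof enumerates the small part $S=A\cap[\frac{n}{3}]$ first and counts the compatible large parts, using both constraint types: the two-small-one-large triples force the $|S+S|$ elements of $(n+1)-(S+S)$ out of $A$ (Claim~\ref{claim:n/3-|S+S|}), which is where Freiman/Green--Morris enters (small $|S+S|$ means few choices of $S$, large $|S+S|$ means few choices of the large part), while the one-small-two-large triples are handled by Janson exactly as you envisage, but conditioned on a \emph{fixed} $S$ and weighted by the partition count of $\sum_{a\in S}(\frac{n}{3}-a)$ (Claim~\ref{claim:1/2_prob} and Lemma~\ref{lemma:partitions_basic}). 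If you want to keep your order of enumeration (large part first), you would have to additionally control, for each $A_\ell$, the number of $A_s\subseteq F(A_\ell)$ whose sumset $A_s+A_s$ avoids $(n+1)-A_\ell$ --- a sumset-avoidance condition on the object being counted, which is considerably harder to exploit than the paper's arrangement.
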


\begin{corollary}\label{cor:Larger_n/2}
There are $O(2^{n/3})$ sets $A \subseteq [\frac{n}{2}+1,n]$ satisfying $2n+1 \notin 3A$.
\end{corollary}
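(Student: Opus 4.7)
The plan is to deduce the corollary from Theorem~\ref{thm:Grn/2} by a simple linear shift that normalizes the set $[\frac{n}{2}+1,n]$ to an initial interval $[n/2]$. Set $n' = n/2$ (absorbing parity into the convention of Remark~\ref{remark:divisibility}). Given any $A \subseteq [\frac{n}{2}+1,n]$, define $B = A - \frac{n}{2} = \{a - \frac{n}{2} \mid a \in A\} \subseteq [n']$, which is clearly a bijective correspondence between subsets of $[\frac{n}{2}+1,n]$ and subsets of $[n']$.

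The key observation is that this shift transports the forbidden-sum condition $2n+1 \notin 3A$ to the forbidden-sum condition $n'+1 \notin 3B$. Indeed, for any $a_1,a_2,a_3 \in A$ with $b_i = a_i - \frac{n}{2}$, we have
\[ a_1 + a_2 + a_3 = 2n+1 \quad\Longleftrightarrow\quad b_1 + b_2 + b_3 = 2n+1 - 3 \cdot \tfrac{n}{2} = \tfrac{n}{2}+1 = n'+1.\]
Thus $2n+1 \in 3A$ if and only if $n'+1 \in 3B$, so the bijection $A \mapsto B$ restricts to a bijection between the sets $A \subseteq [\frac{n}{2}+1,n]$ with $2n+1 \notin 3A$ and the sets $B \subseteq [n']$ with $n'+1 \notin 3B$.

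Applying Theorem~\ref{thm:Grn/2} with $n$ replaced by $n'$, the latter family has size $O(2^{2n'/3}) = O(2^{n/3})$, which completes the proof. There is no real obstacle here beyond verifying the shift calculation; the substantive work is encapsulated in Theorem~\ref{thm:Grn/2} itself.
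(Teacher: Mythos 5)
Your proof is correct and follows exactly the paper's argument: shift $A$ down by $n/2$, observe that the condition $2n+1\notin 3A$ becomes $\frac{n}{2}+1\notin 3A'$, and apply Theorem~\ref{thm:Grn/2} with $n/2$ in place of $n$ to get the bound $O(2^{2(n/2)/3})=O(2^{n/3})$. No differences worth noting.
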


\begin{proof}[ of Corollary~\ref{cor:Larger_n/2}]
Map every set $A \subseteq [\frac{n}{2}+1,n]$ satisfying $2n+1 \notin 3A$ to the set $A' = \{ x- \frac{n}{2} \mid x \in A\} \subseteq [\frac{n}{2}]$, which satisfies $\frac{n}{2}+1 \notin 3A'$. By Theorem~\ref{thm:Grn/2}, the number of possible distinct sets $A'$ is $O(2^{n/3})$. Since the mapping is injective, the corollary follows.
\end{proof}

The proof of Theorem~\ref{thm:Grn/2} employs a counting technique due to~\cite{AlonRefine14}.
For every set $S \subseteq [\frac{n}{3}]$ we consider all sets $A \subseteq [n]$ with $n+1 \notin 3A$ such that $S = A \cap [\frac{n}{3}]$.
The following two claims provide upper bounds on the number of sets $A$ associated with a given $S$. The first is particularly useful for sets $S$ with a large sumset $S+S$, and the second, whose proof uses Janson's inequality, is useful for sets $S$ whose elements are, in average, significantly smaller than $\frac{n}{3}$.

\begin{claim}\label{claim:n/3-|S+S|}
For every integer $n$ and a set $S \subseteq [\frac{n}{3}]$, the number of sets $A \subseteq [n]$ such that $n+1 \notin 3A$ and $S = A \cap [\frac{n}{3}]$ is at most $2^{2n/3-|S+S|}$.
\end{claim}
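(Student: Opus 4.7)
The plan is to fix $S = A \cap [\frac{n}{3}]$ and reduce the claim to a count of the ``upper part'' $U := A \cap [\frac{n}{3}+1,n]$, which together with $S$ determines $A$. A priori $U$ is an arbitrary subset of an interval of size $\frac{2n}{3}$, giving $2^{2n/3}$ possibilities; the task is to show that the constraint $n+1 \notin 3A$ forces a specific collection of $|S+S|$ elements of $[\frac{n}{3}+1,n]$ to lie outside $U$, shaving the count down by a factor of $2^{|S+S|}$.

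The mechanism for this is a one-step forced-exclusion. For each $t \in S+S$, write $t = s_1 + s_2$ with $s_1, s_2 \in S \subseteq A$, and consider the element $n+1-t$. If $n+1-t$ belonged to $A$, then the triple $(s_1, s_2, n+1-t) \in A^3$ would sum to $n+1$, contradicting $n+1 \notin 3A$; hence $n+1-t \notin A$. Since $S \subseteq [\frac{n}{3}]$ forces $t \in [2, \frac{2n}{3}]$, the value $n+1-t$ lies in $[\frac{n}{3}+1, n-1]$, i.e., squarely in the upper range governing $U$, so $n+1-t$ is forbidden from $U$.

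It remains to observe that the map $t \mapsto n+1-t$ is injective, so the excluded values $\{n+1-t \mid t \in S+S\}$ form a set of $|S+S|$ distinct elements inside $[\frac{n}{3}+1, n]$. Consequently $U$ is confined to the complement of this set, whose size is exactly $\frac{2n}{3} - |S+S|$, yielding at most $2^{2n/3 - |S+S|}$ choices and proving the claim (with the degenerate case $S = \emptyset$ recovering the trivial $2^{2n/3}$). I do not foresee any real obstacle here; the content of the argument is just the observation that each element of the sumset $S+S$ pins down, via the reflection $t \mapsto n+1-t$, one specific ``high'' integer that cannot appear in $A$.
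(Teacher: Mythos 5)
Your proof is correct and is essentially identical to the paper's argument: both fix $S$, observe that $n+1 \notin 3A$ forces the $|S+S|$ distinct elements of $(n+1)-(S+S) \subseteq [\frac{n}{3}+1,n]$ out of $A$, and count the remaining choices for $A \cap [\frac{n}{3}+1,n]$. Nothing further is needed.
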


\begin{proof}
Fix a set $S \subseteq [\frac{n}{3}]$. In order to specify a set $A \subseteq [n]$ satisfying $S = A \cap [\frac{n}{3}]$ one has to specify the elements of $A \cap [\frac{n}{3}+1,n]$. However, the assumption $n+1 \notin 3A$ implies that the $|S+S|$ elements of the set $(n+1)-(S+S) \subseteq [\frac{n}{3}+1,n]$ do not belong to $A$. This implies that the number of sets $A \subseteq [n]$ such that $n+1 \notin 3A$ and $S = A \cap [\frac{n}{3}]$ is at most $2^{2n/3-|S+S|}$, as required.
\end{proof}

\begin{claim}\label{claim:1/2_prob}
For every integers $\ell$, $k$, and $n$, the following holds.
Let $S \subseteq [\frac{n}{3}]$ be a set with $|S|=\ell \geq 1$ and $\sum_{a \in S}{(\frac{n}{3}-a)} = k$.
Then, the number of sets $A \subseteq [n]$ such that $n+1 \notin 3A$ and $S = A \cap [\frac{n}{3}]$ is at most $e^{-k/(32\ell)} \cdot 2^{2n/3}$.
\end{claim}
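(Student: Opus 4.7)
The plan is to reduce the count to a probability bound and apply Janson's inequality (Lemma~\ref{lemma:Janson}). Set $X = [\tfrac{n}{3}+1,n]$, so $|X| = \tfrac{2n}{3}$, and observe that every set $A$ with $A\cap[\tfrac{n}{3}]=S$ is of the form $A = S\cup R$ for a unique $R\subseteq X$. Choosing $R$ uniformly at random (each element independently with probability $p=1/2$), the count we seek equals $2^{2n/3}\cdot \Pr[A \text{ is good}]$, so the task reduces to showing $\Pr[A\text{ is good}] \le e^{-k/(32\ell)}$. Since every element of $R$ exceeds $\tfrac{n}{3}$, any three elements of $R$ sum to more than $n+1$, and triples of the form $r + a_1 + a_2$ with $r\in R$ and $a_1, a_2\in S$ only deterministically forbid certain elements of $X$ from belonging to $R$, which can only further restrict the good configurations. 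The random constraint we focus on is: for each $a\in S$ and each unordered pair (or singleton, if the two coincide) $\{r_1,r_2\}\subseteq X$ with $r_1+r_2 = n+1-a$, the set $\{r_1,r_2\}\not\subseteq R$. Taking these ``bad'' sets as the events in Janson's inequality gives an upper bound on $\Pr[A\text{ is good}]$.

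For each $a\in S$ the set $Z_a := X\cap((n+1-a)-X)$ equals $[\tfrac{n}{3}+1,\tfrac{2n}{3}-a]$, so $|Z_a| = \tfrac{n}{3}-a$, and the bad events for $a$ partition $Z_a$ into $N^{(2)}_a$ two-element sets and $N^{(1)}_a\in\{0,1\}$ singleton(s). A short calculation using $2N_a^{(2)}+N_a^{(1)}=|Z_a|$ yields
\[
\mu \;=\; \sum_{a\in S}\!\Big(\tfrac14 N_a^{(2)} + \tfrac12 N_a^{(1)}\Big)
\;\ge\; \frac{1}{8}\sum_{a\in S}\Big(\frac{n}{3}-a\Big) \;=\; \frac{k}{8}.
\]
Each $r\in X$ lies in at most $\ell$ bad events (one per $a\in S$, namely the one with partner $n+1-a-r$, if that lies in $X$), so $d_r\le \ell$; moreover, two distinct bad events sharing $r$ necessarily come from different values of $a$ and hence share only that element. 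Combining this with the identity $\sum_{r\in X} d_r = \sum_{a\in S}|Z_a|=k$, after isolating the small singleton cross-terms, one arrives at $\Delta \le \tfrac{(\ell-1)k}{8} + O(\ell^2) \le \tfrac{\ell k}{4}$, the last inequality being valid once $k$ exceeds a constant multiple of $\ell$.

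Janson's inequality then gives $\Pr[A\text{ is good}]\le \max\!\bigl(e^{-\mu/2},\,e^{-\mu^2/(2\Delta)}\bigr)$, and using $\mu/2\ge k/16\ge k/(32\ell)$ (valid for $\ell\ge 1$) together with $\mu^2/(2\Delta)\ge (k/8)^2/(\ell k/2)=k/(32\ell)$ proves $\Pr[A\text{ is good}] \le e^{-k/(32\ell)}$. The main obstacle I expect is the careful bookkeeping required to absorb the $O(\ell^2)$ singleton contribution to $\Delta$ without losing the desired constant $1/32$; in the residual small-$k$ regime, where the trivial bound $k \ge \binom{\ell}{2}$ forces $\ell$ to be bounded, $\Delta$ can be estimated directly on a case-by-case basis, and often the $e^{-\mu/2}$ branch of Janson alone suffices because $\mu$ is boosted by the singleton contributions that dominate in that regime.
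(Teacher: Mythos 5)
Your proposal follows exactly the paper's route: the same Janson setup with $X=[\frac{n}{3}+1,n]$ and $p=\frac12$, the same family of bad pairs/singletons $\{x,y\}\subseteq X$ with $x+y=n+1-a$ for $a\in S$, the same estimate $\mu\geq k/8$, and the same target $\Delta\leq \ell k/4$ feeding into $\max(e^{-\mu/2},e^{-\mu^2/(2\Delta)})\leq e^{-k/(32\ell)}$. The one genuine loose end is the regime you defer: your bound $\Delta\leq \frac{(\ell-1)k}{8}+O(\ell^2)$ only yields $\Delta\leq \ell k/4$ when $k\gtrsim \ell$, and the fallback you sketch for the residual regime does not work as stated. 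Janson's conclusion is the \emph{maximum} of the two exponentials, so you cannot elect to ``use the $e^{-\mu/2}$ branch alone''; you must still produce an upper bound on $\Delta$ making $\mu^2/(2\Delta)\geq k/(32\ell)$, and even for bounded $\ell$ this is not automatic (e.g.\ it is not clear that $\Delta\leq\mu$ there). The word ``often'' in your closing sentence is doing real work that the proof does not.

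The gap is easily closed by a sharper count of the nested-singleton cross-terms, which is essentially what the paper does. A size-$2$ bad set $B_i$ meets at most $2(\ell-1)$ other bad sets (at most one per element of $B_i$ per value $a'\neq a$, and none for its own $a$), and at most two of these are singletons contained in $B_i$ (each $r\in X$ lies in at most one singleton). Those contribute $p^2=2p^3$ each but also occupy two of the $2(\ell-1)$ slots, so
\[
\sum_{j\sim i}p^{|B_i\cup B_j|}\;\leq\;\bigl(2(\ell-1)-2\bigr)p^3+2p^2\;=\;2\ell p^3,
\]
and a size-$1$ set contributes at most $(\ell-1)p^2\leq 2\ell p^3$. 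Hence $\Delta\leq 2\ell p^3\,|J|\leq \ell k/4$ unconditionally, and no residual case analysis is needed. With that correction your argument is complete and coincides with the paper's.
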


\begin{proof}
Fix a set $S \subseteq [\frac{n}{3}]$ with $|S|=\ell$ and $\sum_{a \in S}{(\frac{n}{3}-a)} = k$.
To prove the claim, we apply Janson's inequality (Lemma~\ref{lemma:Janson}) with $X = [\frac{n}{3}+1,n]$ and $p = 1/2$ as follows.
Let $\{B_i\}_{i \in J}$ be the collection of all sets $\{x,y\} \subseteq X$ for which $x+y+a = n+1$ for some $a \in S$.
Observe that every $a \in S$ is associated with $\lceil \frac{1}{2}(\frac{n}{3}-a) \rceil$ such sets (one of which might be of size $1$), consisting of the elements of the interval $[\frac{n}{3}+1,\frac{2n}{3}-a] \subseteq X$. This implies that $\frac{k}{2} \leq |J| \leq k$.
Let $R$ denote a random subset of $X$, where every element of $X$ is included in $R$ independently with probability $p$, and notice that if the set $A = S \cup R$ satisfies $n+1 \notin 3A$ then no $B_i$ for $i \in J$ is contained in $R$.
To bound the probability of this event, consider the quantities $\mu$ and $\Delta$ from Lemma~\ref{lemma:Janson}, and observe that $\mu \geq p^2 \cdot |J| \geq \frac{k}{8}$. Further, every set $B_i$ of size $2$ (respectively $1$) intersects at most $2\ell$ (respectively $\ell$) of the other sets, implying that $\Delta \leq p^3 \cdot 2\ell \cdot |J| \leq \frac{\ell k}{4}$. By Janson's inequality, the probability that no $B_i$ is contained in $R$ is at most $\max (e^{-\mu/2},e^{-\mu^2/(2\Delta)})$, which is bounded from above by $e^{-k/(32\ell)}$. This yields an upper bound of $e^{-k/(32\ell)} \cdot 2^{2n/3}$ on the number of sets $A \subseteq [n]$ such that $n+1 \notin 3A$ and $S = A \cap [\frac{n}{3}]$.
\end{proof}

Now we are ready to prove Theorem~\ref{thm:Grn/2}, whose proof uses the above claims and the bounds given in Section~\ref{sec:sumset} on the number of sets of integers with a bounded-size sumset.

\begin{proof}[ of Theorem~\ref{thm:Grn/2}]
Fix a sufficiently small constant $\delta >0$.
For a set $A \subseteq [n]$ denote $S_A = A \cap [\frac{n}{3}]$.
For integers $k$ and $\ell$, let $\calS(k,\ell)$ denote the collection of all sets $S \subseteq [\frac{n}{3}]$ such that $|S| = \ell$ and $\sum_{a \in S}{(\frac{n}{3}-a)} = k$.
Our goal is to bound the size of the collection $\calC$ of all sets $A \subseteq [n]$ satisfying $n+1 \notin 3A$.

We first count the sets $A \in \calC$ for which $S_A \in \calS(k,\ell)$ for $k$ and $\ell$ satisfying $k \geq \ell^2/\delta$.
Since $2^{2n/3}$ subsets of $[n]$ are associated with $S_A = \emptyset$, it can be assumed that $\ell \geq 1$.
For given $k$ and $\ell$, Lemma~\ref{lemma:partitions_basic} implies that
\begin{eqnarray}\label{eq:|S(k,l)|}
|\calS(k,\ell)| \leq \Big ( \frac{e^2 k}{\ell^2}\Big )^\ell.
\end{eqnarray}
By Claim~\ref{claim:1/2_prob}, at most $e^{-k/(32\ell)} \cdot 2^{2n/3}$ sets $A \in \calC$ are associated with the same $S_A \in \calS(k,\ell)$. Hence, for a given $\ell$,
\begin{eqnarray}\label{eq:num_A}
| \{ A \in \calC ~|~ S_A \in \calS(k,\ell) ~\mbox{for some}~ k \geq \ell^2 / \delta \} |
\leq \sum_{k=\ell^2/\delta}^{\infty}{ \Big ( \frac{e^2 k}{\ell^2}\Big )^\ell \cdot e^{-k/(32\ell)} \cdot 2^{2n/3} }.
\end{eqnarray}
To bound the above, observe that the function $g$ defined by $g(x) = ( \frac{e^2 x}{\ell^2})^\ell \cdot e^{-x/(32\ell)}$ satisfies for every $x \geq \ell^2/\delta$,
\[ \frac{g(x+32\ell)}{g(x)} = \Big ( 1+ \frac{32\ell}{x} \Big )^\ell \cdot e^{-1}  \leq  e^{32\ell^2/x} \cdot e^{-1} \leq e^{32 \delta -1} \leq \frac{1}{2},\]
assuming that $\delta$ is sufficiently small.
By $g(\ell^2/\delta) = ( \frac{e^2}{\delta})^\ell \cdot e^{-\ell/(32\delta)}$ and the fact that $g$ is decreasing on $[0,\infty)$, it follows that~\eqref{eq:num_A} is bounded from above by
\[ 64 \ell \cdot \Big (\frac{e^2}{\delta} \Big)^\ell \cdot e^{-\ell/(32\delta)} \cdot 2^{2n/3} \leq e^{-\ell} \cdot 2^{2n/3},\]
where we again use the fact that $\delta$ is sufficiently small.
Summing over all integers $\ell$ we obtain a bound of $O(2^{2n/3})$.

We next count the sets $A \in \calC$ for which $S_A \in \calS(k,\ell)$ for $k$ and $\ell$ satisfying $k < \ell^2/\delta$.
To do so, we consider the following two cases defined by the size of the sumset $S_A+S_A$.
Note that one can assume here, whenever needed, that $\ell$ is sufficiently large, as for a constant $\ell$ and for $k < \ell^2/\delta$ there is only a constant number of sets $S_A \in \calS(k,\ell)$ and they correspond to $O(2^{2n/3})$ sets $A$.
\begin{enumerate}
  \item Consider the sets $A \in \calC$ with $S_A \in \calS(k,\ell)$ satisfying $|S_A+S_A| \geq \ell/\delta$.
Combining~\eqref{eq:|S(k,l)|} with Claim~\ref{claim:n/3-|S+S|}, the number of these sets for a given $\ell$ is at most
\[ \sum_{k=1}^{\ell^2/\delta}{ \Big ( \frac{e^2 k}{\ell^2}\Big )^\ell \cdot 2^{2n/3 -\ell/\delta} } \leq
\frac{\ell^2}{\delta} \cdot \Big ( \frac{e^2}{\delta} \Big )^\ell \cdot 2^{2n/3 -\ell/\delta} \leq
e^{-\ell} \cdot 2^{2n/3},\]
where we have used $k < \ell^2/\delta$ and that $\delta$ is sufficiently small.
Summing over all integers $\ell$ we obtain a bound of $O(2^{2n/3})$.

  \item Consider the sets $A \in \calC$ with $S_A \in \calS(k,\ell)$ satisfying $|S_A+S_A| \leq \ell/\delta$, and recall that
  by Claim~\ref{claim:Freiman} we have $|S_A+S_A| \geq 2\ell-1$.
For such a set $S_A$ let $\lambda \in [2-\delta,1/\delta]$ be a real number satisfying
$\lambda \ell \leq |S_A+S_A| \leq (1+\delta) \cdot \lambda  \ell$.
By Theorem~\ref{thm:partitions_GM}, for a sufficiently large $\ell$, the number of such sets $S_A$ is at most
\[2^{\delta \ell} \cdot {\binom {(1+\delta)\lambda \ell/2}{\ell}} \cdot k ^{(1+\delta)\lambda +\delta} \leq 2^{O(\delta \ell)} \cdot {\binom {(1+\delta)\lambda \ell/2}{\ell}},\]
where for the inequality we have used $k < \ell^2/\delta$, $\lambda \leq 1/\delta$, and the assumption that $\delta$ is sufficiently small.
By Claim~\ref{claim:n/3-|S+S|} we obtain that, for given $\ell$ and $\lambda$, the number of sets $A$ as above is at most
\[ 2^{O(\delta \ell)} \cdot {\binom {(1+\delta)\lambda \ell/2}{\ell}} \cdot 2^{2n/3-\lambda \ell} \leq 2^{O(\delta \ell)} \cdot 2^{(1+\delta)\lambda \ell /2} \cdot 2^{2n/3-\lambda\ell} \leq 2^{-\ell/2} \cdot 2^{2n/3}.\]
Summing over $O(1)$ values of $\lambda$, say $\lambda = (2-\delta)(1+\delta)^j$ for $0 \leq j \leq \frac{2 \ln (1/\delta)}{\delta}$, and over all integers $\ell$, we obtain a bound of $O(2^{2n/3})$.
\end{enumerate}
Summing all the obtained bounds, we get the required bound of $O(2^{2n/3})$, thus the proof is completed.
\end{proof}

\subsubsection{Sets that Include a Small Element and are Almost Contained in $B_n$}\label{sec:easy}

Here we show an easy bound on the number of sum-free sets $A \subseteq [n]$ with $2n+1 \notin 3 A$ that intersect $[\frac{n}{2}]$ and are almost contained in $[\frac{2n}{3}+1,n]$. (Recall Remark~\ref{remark:divisibility}.)

\begin{lemma}\label{lemma:o(2^{n/3})}
For every sufficiently small $\gamma > 0$, there are $o(2^{n/3})$ sum-free sets $A \subseteq [n]$ satisfying $A \cap [\frac{n}{2}] \neq \emptyset$, $2n+1 \notin 3A$, and $|A \setminus [\frac{2n}{3}+1,n]| \leq \gamma n$.
\end{lemma}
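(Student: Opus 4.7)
The plan is to decompose each such $A$ as $A = A_1 \sqcup A_2$ with $A_1 = A \cap [1,\tfrac{2n}{3}]$ and $A_2 = A \cap [\tfrac{2n}{3}+1,n]$, enumerate the $A_1$'s first, and then count the $A_2$'s compatible with each $A_1$. The hypothesis gives $|A_1| \le \gamma n$, and $A \cap [\tfrac{n}{2}] \ne \emptyset$ together with $[\tfrac{n}{2}] \subseteq [1,\tfrac{2n}{3}]$ forces $a := \min A$ into $[1,\tfrac{n}{2}] \cap A_1$. By the standard entropy bound for binomial tails, the number of such $A_1$ is at most $2^{h(\gamma) n}$, where $h(\gamma) := \tfrac{2}{3} H(\tfrac{3\gamma}{2}) \to 0$ as $\gamma \to 0^+$.

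For each fixed $A_1$, the valid $A_2 \subseteq [\tfrac{2n}{3}+1,n]$ are independent sets of a graph $G$ on $[\tfrac{2n}{3}+1,n]$ whose edge set is forced by $a \in A$ to contain two families: first, all pairs $\{y, y+a\}$ with $y, y+a \in [\tfrac{2n}{3}+1,n]$ (from sum-freeness, since $a + y = y+a$ would be an additive triple inside $A$), and second, all pairs $\{y,z\}$ with $y \ne z$, $y + z = 2n+1-a$, and $y,z \in [\tfrac{2n}{3}+1,n]$ (from the forbidden sum, since $a + y + z = 2n+1$). The goal is a uniform bound $|\calI(G)| \le 2^{n/3 - c_0 n}$ with $c_0 > 0$ independent of $a$, obtained by splitting on the size of $a$. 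If $a \le \alpha_0 n$ for a suitably small constant $\alpha_0 > 0$, the first family decomposes $[\tfrac{2n}{3}+1,n]$ into $a$ disjoint paths, one per residue class modulo $a$, whose vertex counts $k_1, \ldots, k_a$ sum to $n/3$. Since the number of independent sets in a path on $k$ vertices is the Fibonacci number $F_{k+2} \le \phi^{k+1}$ (with $\phi = (1+\sqrt 5)/2$), we get $|\calI(G)| \le \prod_r F_{k_r+2} \le \phi^{n/3+a}$, which, for any $\alpha_0 < \tfrac{1 - \log_2 \phi}{3 \log_2 \phi}$, is at most $2^{n/3-c_1 n}$ for some $c_1 > 0$.

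If instead $a > \alpha_0 n$, the second family forms a matching of size $m(a) \ge \min(\lfloor a/2 \rfloor, \tfrac{n}{3} - \lceil a/2 \rceil) \ge \beta n$ for a constant $\beta = \beta(\alpha_0) > 0$ uniform on $(\alpha_0 n, \tfrac{n}{2}]$. The number of subsets of $[\tfrac{2n}{3}+1,n]$ meeting each matching edge in at most one endpoint is $3^{m(a)} \cdot 2^{n/3-2m(a)} = 2^{n/3} \cdot (\tfrac{3}{4})^{m(a)} \le 2^{n/3 - c_2 n}$ with $c_2 = \beta \log_2(4/3) > 0$. Setting $c_0 := \min(c_1, c_2) > 0$ and combining, the total count is at most $2^{h(\gamma) n} \cdot 2^{n/3 - c_0 n}$, which is $o(2^{n/3})$ once $\gamma$ is small enough that $h(\gamma) < c_0$.

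The main technical point is confirming the uniform positivity of $c_0$; the delicate regime is $a \approx \alpha_0 n$, where the sum-free bound $\phi^{n/3+a}$ becomes marginal (nontrivial only while $a < \tfrac{1 - \log_2 \phi}{3 \log_2 \phi} \cdot n \approx 0.147\,n$) precisely where the matching from the forbidden sum is of size only $\sim \alpha_0 n / 2$. An elementary calculation nonetheless shows that any $\alpha_0$ in the interval $(0, \tfrac{1 - \log_2 \phi}{3 \log_2 \phi})$ simultaneously yields $c_1, c_2 > 0$, so the argument goes through for all sufficiently small $\gamma$.
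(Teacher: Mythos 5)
Your proof is correct and follows essentially the same strategy as the paper's: fix the small part $A\cap[\frac{2n}{3}]$ (paying an entropy cost $2^{O(H(3\gamma/2))n}$), pick one element $a\le n/2$ of it, and use it to force linearly many disjoint constraints on $A\cap[\frac{2n}{3}+1,n]$ — difference pairs $\{y,y+a\}$ from sum-freeness when $a$ is small, and pairs summing to $2n+1-a$ from the forbidden sum when $a$ is large. The only differences are cosmetic: the paper splits at $a=n/6$ and settles for a matching of $n/12$ disjoint pairs in both regimes (giving $3^{n/12}2^{n/6}$ per small part), whereas you exploit the full path structure via Fibonacci bounds in the small-$a$ regime, which is a slightly sharper but equivalent way to get the needed uniform exponential saving.
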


\begin{proof}
Fix a set $S \subseteq [\frac{2n}{3}]$ for which there exists $z \in S \cap [\frac{n}{2}]$.
Observe that if $z \in [\frac{n}{6}]$ then there is a collection of at least $n/12$ pairwise disjoint sets $\{x,y\} \subseteq [\frac{2n}{3}+1,n]$ such that $x-y =z$. In addition, if $z \in [\frac{n}{6},\frac{n}{2}]$ then there is a collection of at least $n/12$ pairwise disjoint sets $\{x,y\} \subseteq [\frac{2n}{3}+1,n]$ such that $x+y+z=2n+1$.
In both cases, every sum-free set $A \subseteq [n]$ with $2n+1 \notin 3A$ such that $S = A \setminus [\frac{2n}{3}+1,n]$ does not contain any of these sets, hence the number of such sets $A$ is at most $2^{n/3-n/6} \cdot 3^{n/12} = 2^{n/6} \cdot 3^{n/12}$. Summing over all choices of sets $S \subseteq [\frac{2n}{3}]$ of size at most $\gamma n$ such that $S \cap [\frac{n}{2}] \neq \emptyset$, we get that the total number of sets $A \subseteq [n]$ satisfying $A \cap [\frac{n}{2}] \neq \emptyset$, $2n+1 \notin 3A$, and $|A \setminus [\frac{2n}{3}+1,n]| \leq \gamma n$, is at most
\[\sum_{m=1}^{\gamma n}{{\binom {2n/3} {m}} \cdot 2^{n/6} \cdot 3^{n/12}} \leq 2^{H(3\gamma/2) \cdot 2n/3} \cdot 2^{n/6} \cdot 3^{n/12} \leq o(2^{n/3}),\]
where $H$ stands for the binary entropy function and $\gamma$ is assumed to be sufficiently small.
\end{proof}

\subsubsection{Putting Everything Together}\label{sec:final_proof}

We are finally ready to prove Theorem~\ref{thm:3A4A5A}, which confirms Theorem~\ref{thm:countingIntro}.

\begin{proof}[ of Theorem~\ref{thm:3A4A5A}]
Let $\gamma >0$ be a sufficiently small constant.
By Corollary~\ref{cor:far_from_extremal}, there are $o(2^{n/3})$ sum-free sets $A \subseteq [n]$ satisfying $2n+1 \notin (3A) \cup (4A) \cup (5A)$ and $|A \setminus [\frac{2n}{3}+1,n]| \geq \gamma n$.
Hence, it suffices to bound the number of sum-free sets $A \subseteq [n]$ satisfying $2n+1 \notin 3A$ and $|A \setminus [\frac{2n}{3}+1,n]| < \gamma n$.
By Lemma~\ref{lemma:o(2^{n/3})}, at most $o(2^{n/3})$ of them intersect $[\frac{n}{2}]$, and by Corollary~\ref{cor:Larger_n/2}, at most $O(2^{n/3})$ of them do not. This completes the proof.
\end{proof}

\subsection{On the Tightness of Theorem~\ref{thm:n/3}}\label{sec:3A}

Theorem~\ref{thm:extremalIntro} provides a tight upper bound of $\lfloor \frac{1}{3}(n+1) \rfloor$ on the size of every sum-free set $A \subseteq [n]$ satisfying $2n+1 \notin \sum A$.
In fact, the bound is shown in Theorem~\ref{thm:n/3} even under the weaker assumption $2n+1 \notin (3A) \cup (4A)$.
We claim that Theorem~\ref{thm:n/3} is tight not only because of the bound that it provides, but also in the sense that the assumption $2n+1 \notin (3A) \cup (4A)$ cannot be relaxed to $2n+1 \notin 3A$. To see this, let $n$ be an integer satisfying $n = 2~(\mod~5)$, and consider the set
\begin{equation}\label{eq:Set_1,4}
A = \{ x \in [n] \mid x=1~\mbox{ or }~4 ~(\mod~5)\}.
\end{equation}
Notice that over $\Z_5$ we have $2\{1,4\} = \{0,2,3\}$, implying that $A$ is sum-free. We also have, over $\Z_5$, that $3\{1,4\} = \{0,2,3\}+\{1,4\} = \{1,2,3,4\}$, implying that $2n+1$, which is divisible by $5$, is not in $3A$. It follows that for every integer $n$ such that $n = 2~(\mod~5)$ there exists a sum-free set $A \subseteq [n]$ satisfying $2n+1 \notin 3A$ whose size is $|A| = \lceil \frac{2n}{5} \rceil$. Another example of such a set is given by the set of all integers of $[n]$ that are congruent to $2$ or $3$ modulo $5$. The following theorem shows that these constructions achieve the largest possible size of a set with these properties.

\begin{theorem}\label{thm:2n/5}
For every sufficiently large integer $n$, every sum-free set $A \subseteq [n]$ such that $2n+1 \notin 3A$ satisfies
\[|A| \leq \Big \lceil \frac{2n}{5} \Big \rceil .\]
\end{theorem}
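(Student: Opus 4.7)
The plan is to argue by contradiction: suppose $A \subseteq [n]$ is sum-free with $2n+1 \notin 3A$ and $|A| \geq \lceil 2n/5 \rceil + 1$, and apply Tran's structural theorem (Theorem~\ref{thm:Tran}) with some fixed small $\eta > 0$. Since $|A|/n > 2/5 \geq 2/5 - \eta$, for $n$ large the hypothesis is satisfied, so $A$ falls into one of the five alternatives. Alternatives~\ref{itm:1} and~\ref{itm:2} are eliminated immediately, since the number of integers in $[n]$ in the specified residue classes modulo~$5$ is at most $\lceil 2n/5 \rceil$.

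For alternative~\ref{itm:3} (all of $A$ odd), I would use the bijection $a \mapsto (a+1)/2$ to pass from $A$ to $A' \subseteq [N]$ with $N = \lceil n/2 \rceil$; the constraint $2n+1 \notin 3A$ becomes $n+2 \notin 3A'$, while $|A'| = |A| > \lceil 2n/5 \rceil > 2N/3$ for $n$ large. By Claim~\ref{claim:Freiman}, $|A'+A'| \geq 2|A'|-1$, so $(A'+A') \cap [N+1,2N]$ contains at least $2|A'|-N$ elements; moreover $(n+2)-A'$ has at least $|A'|-1$ elements inside $[N+1,2N]$. Inclusion-exclusion in the interval $[N+1,2N]$ gives at least $3|A'|-2N-1 > 0$ common elements, producing a triple in $A'$ summing to $n+2$, i.e.\ a triple in $A$ summing to $2n+1$, a contradiction.

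Alternative~\ref{itm:4} ($m := \min A \geq |A| > 2n/5$) splits by whether $m > n/2$ or not. In both subcases I exploit that $A$ is disjoint from $A+A$ and from $(2n+1)-(A+A)$. If $m > n/2$, then $(2n+1)-(A+A) \subseteq [1,n]$, so $|A|+|A+A| \leq n$ and hence $|A| \leq (n+1)/3 < 2n/5$. If $m \leq n/2$, a range computation restricted to $[m,n]$ shows that $(2n+1)-(A+A)$ contributes at least $|A+A|+m-n$ elements of $[m,n]$, yielding $|A|+|A+A| \leq 2n-2m+1$; combining with $|A+A| \geq 2|A|-1$ and $m \geq |A|$ gives $5|A| \leq 2n+2$, i.e.\ $|A| \leq \lceil 2n/5 \rceil$. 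Finally, for alternative~\ref{itm:5}, set $c = 200\eta^{1/2}$ and decompose $A = A_1 \sqcup A_2$ with $A_i = A \cap I_i$; for $c$ small the only way to have $2n+1 = x+y+z$ with $x,y,z \in A$ is with $x \in A_1$ and $y,z \in A_2$, so $A_1$ is disjoint from $(2n+1)-(A_2+A_2)$. A range check shows at most $n/5$ elements of $(2n+1)-(A_2+A_2)$ lie outside $I_1$, giving $|A_1|+2|A_2| \leq (2/5+2c)n+2$; combined with $|A_1| \leq |I_1| = (1/5+2c)n+1$ this optimizes to $|A| \leq (3/10+2c)n + O(1)$, strictly below $2n/5$ for small $c$ and large $n$. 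The main technical point will be alternative~\ref{itm:5}, where the range calculations need to align precisely so that Tran's structural information, together with the sum-free and forbidden-sum constraints, forces $|A|$ below the $2n/5$ threshold.
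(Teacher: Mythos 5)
Your proposal is correct, and its skeleton is necessarily the same as the paper's: invoke Tran's theorem (Theorem~\ref{thm:Tran}) and eliminate each of the five alternatives. The difference lies in how alternatives~\eqref{itm:3}--\eqref{itm:5} are killed. For alternative~\eqref{itm:3} the paper uses a parity shortcut you missed: a sum of four odd numbers is even, so $2n+1\notin 4A$ comes for free, and Theorem~\ref{thm:n/3} immediately gives $|A|\leq\lfloor\frac{1}{3}(n+1)\rfloor$; your rescaling-plus-inclusion-exclusion argument in $[N]$ also works (the count $3|A'|-2N-1\geq \frac{6n}{5}-(n+1)-1>0$ checks out) but is longer. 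For alternative~\eqref{itm:4} the paper proves a separate Lemma~\ref{lemma:n/3_min_large} via explicit families of pairwise disjoint pairs $(x,x+k)$ and $(x,2n+1-2x)$, obtaining the stronger bound $\lfloor\frac{1}{3}(n+1)\rfloor$; your sumset inclusion-exclusion only yields $5|A|\leq 2n+2$ in the subcase $\frac{2n}{5}<m\leq\frac{n}{2}$, which is weaker but still at most $\lceil 2n/5\rceil$ for every residue of $n$ mod $5$, so it suffices. For alternative~\eqref{itm:5} the paper takes $\eta=2/n$ (so the fuzz is $O(\sqrt n)$) and argues that $A$ must contain all but $O(\sqrt n)$ elements of $B=[\frac{n}{5},\frac{2n}{5}]\cup[\frac{4n}{5},n]$, then finds one of $\Omega(n^2)$ triples of $B$ summing to $2n+1$ surviving the deletion; you take a fixed small $\eta$ and run inclusion-exclusion on $A_1$ versus $(2n+1)-(A_2+A_2)$ inside $I_1$, which gives the stronger conclusion $|A|\leq(\frac{3}{10}+O(\eta^{1/2}))n$ and is robust to linear-width fuzz. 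Both routes are elementary; the paper's buys brevity by recycling Theorem~\ref{thm:n/3}, yours is more self-contained and, in alternative~\eqref{itm:5}, quantitatively sharper. Two small points to tidy up: your notation $c=200\eta^{1/2}$ clashes with the constant $c$ in Tran's theorem, and in alternative~\eqref{itm:5} you should dispose separately of the degenerate cases $A_1=\emptyset$ or $A_2=\emptyset$ (each forces $|A|\leq(\frac{1}{5}+O(\eta^{1/2}))n$) before applying $|A_2+A_2|\geq 2|A_2|-1$.
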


The proof of Theorem~\ref{thm:2n/5} uses a recent characterization of large sum-free subsets of $[n]$ due to Tran~\cite{Tran17} (see Theorem~\ref{thm:Tran}).
We start with the following lemma.

\begin{lemma}\label{lemma:n/3_min_large}
For every integer $n$, every nonempty sum-free set $A \subseteq [n]$ such that $\min(A) > \frac{n}{3}$ and $2n+1 \notin 3A$ satisfies $|A| \leq \lfloor \frac{1}{3}(n+1) \rfloor$.
\end{lemma}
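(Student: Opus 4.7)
The plan is to combine a disjointness argument with Freiman's $3k-4$ theorem. Let $a=\min(A)$. I first observe that the three sets $A \subseteq [a,n]$, $A+A \subseteq [2a,2n]$, and $(2n+1)-A \subseteq [n+1,2n+1-a]$ are pairwise disjoint: sum-freeness gives $A \cap (A+A)=\emptyset$; the hypothesis $2n+1 \notin 3A$ rearranges to $(A+A) \cap ((2n+1)-A) = \emptyset$, since $a_1+a_2 = 2n+1-a_3$ would mean $a_1+a_2+a_3 = 2n+1 \in 3A$; and $A$, $(2n+1)-A$ sit in the disjoint ranges $[1,n]$ and $[n+1,2n]$. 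Their union is contained in $[a,2n]$, so
\[ 2|A| + |A+A| \;\leq\; 2n-a+1. \]
Combining with Claim~\ref{claim:Freiman} ($|A+A|\ge 2|A|-1$) yields $|A| \leq (2n-a+2)/4$. This is already $\leq \lfloor(n+1)/3\rfloor$ whenever $a \geq (2n+2)/3$, where in fact the trivial bound $|A|\leq n-a+1 \leq \lfloor(n+1)/3\rfloor$ handles matters directly.

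For the residual range $n/3 < a < (2n+2)/3$ I apply Freiman's $3k-4$ theorem (Theorem~\ref{thm:Freiman}). If $|A+A|\ge 3|A|-3$, plugging this into the disjoint-union bound upgrades it to $5|A|-3 \leq 2n-a+1$, i.e.\ $|A|\leq (2n-a+4)/5$. A short computation shows this is $\leq \lfloor(n+1)/3\rfloor$ for all admissible integer $a$ except possibly $a=\lfloor n/3\rfloor+1$ when $n \equiv 1 \pmod 3$. Otherwise $|A+A|\leq 3|A|-4$, so Freiman places $A$ inside an arithmetic progression $\{b,b+d,\ldots,b+(L-1)d\}$ of length $L\leq 2|A|-3$. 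I then split on the common difference $d$: for $d \geq 2$ the progression is sparse, and the containment $A\subseteq[a,n]$ forces $|A|\leq (n-a)/d+1 \leq \lfloor(n+1)/3\rfloor$; for $d=1$, $A$ lies in a short interval, where sum-freeness prohibits $2b\in A$ and severely restricts the remaining combinatorial possibilities.

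The main obstacle is the $d=1$ subcase, particularly when $a$ is close to $\lfloor n/3\rfloor+1$. There, assuming $|A|=\lfloor(n+1)/3\rfloor+1$ in an interval $[b,b+L-1]$ with $L\leq 2|A|-3$ and $b\geq a$, one must exhibit a triple $x+y+z=2n+1$ with $x,y,z\in A$. Since $A$ has density at least $(k+1)/(2k-1)$ in its host interval (for $n=3k$-type cases), the sumset $3A$ nearly fills $[3b,3b+3L-3]$, and $2n+1$ falls in this range for the relevant values of $b$; one then checks by a pigeonhole / counting argument that the missing elements (in particular $2b$) are too few to avoid the obligatory representation of $2n+1$, contradicting $2n+1 \notin 3A$. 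This is a finite combinatorial verification, but handling the three residues of $n$ modulo $3$ together with the two possible values $a \in \{\lfloor n/3\rfloor+1,\lfloor n/3\rfloor+2\}$ requires some care.
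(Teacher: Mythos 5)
Your opening step is correct and nicely economical: the three sets $A$, $A+A$, and $(2n+1)-A$ are indeed pairwise disjoint subsets of $[a,2n]$, giving $2|A|+|A+A|\leq 2n-a+1$. But from there the argument does not close. First, in the large-doubling regime $|A+A|\geq 3|A|-3$ you yourself flag that the resulting bound $|A|\leq(2n-a+4)/5$ fails to give $\lfloor\frac{1}{3}(n+1)\rfloor$ when $n\equiv 1\pmod 3$ and $a=\lfloor n/3\rfloor+1$ (it gives $m+1$ instead of $m$ for $n=3m+1$), and you never return to resolve that case; since Freiman is unavailable there, this is an unhandled hole, not a detail. Second, the claim that $d\geq 2$ forces $|A|\leq(n-a)/d+1\leq\lfloor\frac{1}{3}(n+1)\rfloor$ also fails for $d=2$, $n=3m+1$, $a=m+1$: it gives $m+1$ again. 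Third, and most seriously, the $d=1$ case is only a sketch, and its key assertion --- that density $>1/2$ of $A$ in its host interval makes $3A$ ``nearly fill'' $[3b,3b+3L-3]$ so that a pigeonhole on the few missing elements yields $2n+1\in 3A$ --- is not true as stated: density forces only the \emph{middle} of an iterated sumset to be covered, and precisely in the troublesome configuration ($b=m+1$, $L$ close to $2|A|-3$) the target $2n+1=6m+3$ lies outside the interval that pigeonhole alone guarantees. To finish you would have to exploit $\max(A)$ and the sum-freeness in an essential, not merely ``finitely verifiable,'' way.

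For contrast, the paper's proof avoids sumset estimates and Freiman entirely. Setting $k=\min(A)>\frac{n}{3}$, it exhibits two explicit families of pairwise disjoint pairs inside $[k,n]$: the pairs $\{x,x+k\}$ for $x\in[k,n-k]$ (at most one element of each lies in $A$ by sum-freeness, since $k\in A$), and the pairs $\{x,2n+1-2x\}$ for $x\in[n-k+1,\lfloor\frac{1}{3}(2n+1)\rfloor]$ (at most one element of each lies in $A$ since $2x+(2n+1-2x)+x'=2n+1$ would otherwise be witnessed in $3A$). Counting the pairs removes exactly enough elements to give $|A|\leq n-\lfloor\frac{1}{3}(2n+1)\rfloor=\lfloor\frac{1}{3}(n+1)\rfloor$ for every $n$, with no case split on the doubling constant and no asymptotics. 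I would recommend abandoning the Freiman route here --- it is both heavier and, as your own boundary cases show, not sharp enough at the exact threshold the lemma requires.
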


\begin{proof}
Let $A \subseteq [n]$ be a sum-free set with $2n+1 \notin 3A$, and denote $k = \min (A) > \frac{n}{3}$.
Assume first that $k \leq \lfloor \frac{n}{2} \rfloor$.
Consider the collection of pairwise disjoint pairs
\[B_1(k) = \{ (x,x+k) \mid x \in [k,n-k] \},\]
that involves all the elements of $[k,n-k] \cup [2k,n]$ (Note that $k \leq n-k < 2k \leq n$).
Since $A$ is sum-free and includes $k$, for every pair $(x,y) \in B_1(k)$ we have $x \notin A$ or $y \notin A$.
Additionally, consider the collection of pairwise disjoint pairs
\[B_2(k) = \Big\{ (x,2n+1-2x) ~\Big |~ x \in \Big [n-k+1, \Big \lfloor\frac{1}{3}(2n+1) \Big \rfloor \Big ]  \Big\},\]
that involves elements from $[n-k+1,2k-1]$ (Note that $n-k+1 \leq 2k$). Notice that for every pair $(x,y) \in B_2(k)$ we have $2x+y=2n+1$, so by $2n+1 \notin 3A$ we have $x \notin A$ or $y \notin A$.
Since the pairs of $B_1(k) \cup B_2(k)$ are pairwise disjoint, it follows that the size of the set $A \subseteq [k,n]$ satisfies
\begin{eqnarray*}
|A| &\leq& (n-k+1) - |B_1(k)| -|B_2(k)| \\
&=& (n-k+1) -  (n-2k+1) - \Big( \Big\lfloor \frac{1}{3}(2n+1) \Big\rfloor -(n-k) \Big) \\
&=& n- \Big\lfloor \frac{1}{3}(2n+1) \Big\rfloor = \Big \lfloor \frac{1}{3}(n+1) \Big \rfloor,
\end{eqnarray*}
as required.
Assume next that $ k > \lfloor \frac{n}{2} \rfloor$. Here, $A \subseteq [ \lfloor \frac{n}{2} \rfloor +1 ,n]$ and a similar argument implies that
\[|A| \leq \Big\lceil \frac{n}{2} \Big\rceil - \Big |B_2 \Big( \Big\lceil \frac{n}{2} \Big\rceil \Big) \Big| =
\Big\lceil \frac{n}{2} \Big\rceil - \Big ( \Big\lfloor \frac{1}{3}(2n+1) \Big\rfloor - \Big\lfloor \frac{n}{2} \Big\rfloor \Big ) =
n- \Big\lfloor \frac{1}{3}(2n+1) \Big\rfloor = \Big \lfloor \frac{1}{3}(n+1) \Big \rfloor,\]
so we are done.
\end{proof}

\begin{proof}[ of Theorem~\ref{thm:2n/5}]
For a sufficiently large integer $n$, let $A \subseteq [n]$ be a sum-free set such that $2n+1 \notin 3A$.
Apply Theorem~\ref{thm:Tran} with $\eta = \frac{2}{n}$.
Assume by contradiction that $|A| > \lceil \frac{2n}{5} \rceil \geq (\frac{2}{5}-\eta) \cdot n$.
It suffices to prove that $A$ does not satisfy any of the five alternatives in Theorem~\ref{thm:Tran}.

Alternatives~\eqref{itm:1} and~\eqref{itm:2} are not satisfied because at most $\lceil \frac{2n}{5} \rceil$ of the elements of $[n]$ are congruent to $1$ or $4$ modulo $5$, and at most $\lceil \frac{2n}{5} \rceil$ of them are congruent to $2$ or $3$ modulo $5$.

For alternative~\eqref{itm:3}, notice that if all the elements of $A$ are odd then $2n+1 \notin 4A$.
By Theorem~\ref{thm:n/3}, using $2n+1 \notin 3A$, we get that $|A| \leq \lfloor \frac{1}{3}(n+1) \rfloor$, in contradiction.

For alternative~\eqref{itm:4}, use Lemma~\ref{lemma:n/3_min_large} to obtain that if $\min(A) \geq |A|> \frac{n}{3}$ then $|A| \leq \lfloor \frac{1}{3}(n+1) \rfloor$, again in contradiction.

Finally, notice that if alternative~\eqref{itm:5} holds, with $\eta = \frac{2}{n}$, then there exists a constant $d>0$ for which
\[A \subseteq \Big[\frac{n}{5}-d \cdot \sqrt{n},\frac{2n}{5}+d \cdot \sqrt{n} \Big] \cup \Big[\frac{4n}{5}-d \cdot \sqrt{n},n \Big].\]
Denote $B = [\frac{n}{5},\frac{2n}{5}] \cup [\frac{4n}{5},n]$ and notice that the assumption $|A| > \lceil \frac{2n}{5} \rceil$ implies that there exists $D \subseteq B$ of size $|D| \leq 3d \cdot \sqrt{n}$ such that $B \setminus D \subseteq A$. To obtain a contradiction, we show that $2n+1 \in 3A$. Indeed, for every $x,y \in [\frac{4n}{5}+1,\frac{9n}{10}] \subseteq B$ there exists $z \in [\frac{n}{5},\frac{2n}{5}] \subseteq B$ such that $x+y+z = 2n+1$, hence there are at least $(\frac{n}{10})^2$ triples of elements of $B$ with sum $2n+1$. However, at most $3|D| \cdot |B| < 4d \cdot n^{1.5}$ of them involve elements of $D$, so for a sufficiently large $n$ there must exist a triple of elements of $B \setminus D \subseteq A$ whose sum is $2n+1$, and we are done.
\end{proof}

While the bound given in Theorem~\ref{thm:2n/5} is tight for integers $n$ satisfying $n = 2~(\mod~5)$ (see~\eqref{eq:Set_1,4}), it turns out that this is not the case in general, as is shown in the following theorem.

\begin{theorem}\label{thm:(2/5-c)n}
There exists a constant $\eta >0$ such that for every sufficiently large integer $n$ such that $n \neq 2~(\mod~5)$, every sum-free set $A \subseteq [n]$ such that $2n+1 \notin 3A$ satisfies $|A| \leq (\frac{2}{5}-\eta) \cdot n$.
\end{theorem}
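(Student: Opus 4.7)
The plan is to mimic the proof of Theorem~\ref{thm:2n/5} with the parameter $\eta$ in Theorem~\ref{thm:Tran} taken as a sufficiently small absolute constant rather than $\eta=2/n$, and to use the hypothesis $n\not\equiv 2\pmod 5$ to rule out alternatives~\eqref{itm:1} and~\eqref{itm:2}. Fix a small $\eta'\in(0,c]$ and set $\eta=\eta'/2$; then any sum-free $A\subseteq[n]$ with $|A|\geq(\frac{2}{5}-\eta)n$ satisfies the hypothesis of Theorem~\ref{thm:Tran}, and it suffices to contradict each of the five alternatives.

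Alternatives~\eqref{itm:3} and~\eqref{itm:4} go through verbatim: if $A$ consists entirely of odd integers then $2n+1\notin 4A$ for parity reasons and Theorem~\ref{thm:n/3} gives $|A|\leq\lfloor(n+1)/3\rfloor$; if $\min(A)\geq|A|>n/3$ then Lemma~\ref{lemma:n/3_min_large} gives the same bound. Both contradict $|A|\geq(\frac{2}{5}-\eta)n$ for small $\eta$ and large $n$. For alternative~\eqref{itm:5}, write $I_1$ and $I_2$ for the two widened intervals containing $A$; their union has length $(\frac{2}{5}+400\sqrt{\eta'})n$, so $|I_i\setminus A|=O(\sqrt{\eta'}\,n)$ for each $i$. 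The same counting as in the proof of Theorem~\ref{thm:2n/5}, now applied to the $\Omega(n^2)$ pairs $(x,y)\in(A\cap I_2)^2$ with $x+y\in[\frac{8n}{5}+2,\frac{9n}{5}]$, shows that at most $O(\sqrt{\eta'}\,n^2)$ of these pairs involve an element outside $A$ or produce $z=2n+1-x-y\notin A$; choosing $\eta'$ small enough leaves a valid triple in $A^3$ summing to $2n+1$, contradicting $2n+1\notin 3A$.

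The main obstacle is handling alternatives~\eqref{itm:1} and~\eqref{itm:2}, which is precisely where the tightness constructions for $n\equiv 2\pmod 5$ live and where the hypothesis $n\not\equiv 2\pmod 5$ must be used. Suppose $A\subseteq\{x\in[n]:x\equiv j_1\text{ or }j_2\pmod 5\}$ for $(j_1,j_2)\in\{(1,4),(2,3)\}$, and set $A_j=A\cap\{x\equiv j\pmod 5\}$. Since each residue class in $[n]$ has size $\frac{n}{5}+O(1)$, pigeonhole yields $|A_{j_1}|,|A_{j_2}|\geq(\frac{1}{5}-2\eta)n$. Since $n\not\equiv 2\pmod 5$ we have $2n+1\not\equiv 0\pmod 5$, and a quick case check enumerates, for each value of $n\pmod 5\in\{0,1,3,4\}$ and each $(j_1,j_2)\in\{(1,4),(2,3)\}$, a multiset composition $(c_1,c_2,c_3)\in\{j_1,j_2\}^3$ with $c_1+c_2+c_3\equiv 2n+1\pmod 5$; for instance, when $(j_1,j_2)=(1,4)$ one has $(c_1,c_2,c_3)=(1,1,1)$ if $n\equiv 1\pmod 5$ and $(1,1,4)$ if $n\equiv 0\pmod 5$.

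It remains to produce such a composition-matching triple in $A^3$ summing exactly to $2n+1$. Via the bijection $x\mapsto(x-j)/5$ between $\{x\in[n]:x\equiv j\pmod 5\}$ and $\{0,\ldots,M_j-1\}$ with $M_j=\frac{n}{5}+O(1)$, this reduces to the following statement: given three subsets $B_1,B_2,B_3$ of $\{0,\ldots,M-1\}$ with $M=\frac{n}{5}+O(1)$ and $|\{0,\ldots,M-1\}\setminus B_i|=O(\eta M)$, there exists $(b_1,b_2,b_3)\in B_1\times B_2\times B_3$ summing to a prescribed target $T$ which a direct computation places around $2M$. The number of such triples in $\{0,\ldots,M-1\}^3$ is $\Omega(M^2)$, while the number of those with some $b_i\notin B_i$ is at most $3\cdot O(\eta M)\cdot M=O(\eta M^2)$; for $\eta$ sufficiently small a valid triple exists. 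Lifting back yields a triple in $A^3$ summing to $2n+1$, contradicting $2n+1\notin 3A$ and completing the proof.
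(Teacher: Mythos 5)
Your proposal is correct and follows essentially the same route as the paper: apply Theorem~\ref{thm:Tran} with a small constant $\eta$, dispose of alternatives~\eqref{itm:3} and~\eqref{itm:4} via Theorem~\ref{thm:n/3} and Lemma~\ref{lemma:n/3_min_large}, handle alternative~\eqref{itm:5} by counting triples summing to $2n+1$, and use $n\not\equiv 2\pmod 5$ (equivalently $2n+1\not\equiv 0\pmod 5$) to find residues $r_1,r_2,r_3\in\{j_1,j_2\}$ with $r_1+r_2+r_3\equiv 2n+1\pmod 5$ and then a robust counting argument to exhibit a forbidden triple in $A$. The only difference is cosmetic: the paper counts the triples directly in $[n]_{r_1}\times[n]_{r_2}\times[n]_{r_3}$ rather than rescaling to subsets of $\{0,\ldots,M-1\}$.
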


\begin{proof}
For a sufficiently large integer $n$ such that $n \neq 2~(\mod~5)$, let $A \subseteq [n]$ be a sum-free set satisfying $2n+1 \notin 3A$.
Apply Theorem~\ref{thm:Tran} with a sufficiently small constant $\eta >0$.
Assume by contradiction that $|A| > (\frac{2}{5}-\eta) \cdot n$.
It suffices to prove that $A$ does not satisfy any of the five alternatives in Theorem~\ref{thm:Tran}.

Assume that alternative~\eqref{itm:1} holds, that is, all the elements of $A$ are congruent to $1$ or $4$ modulo $5$.
Denote by $[n]_r$ the set of integers in $[n]$ which are congruent to $r$ modulo $5$.
Letting $A = A_1 \cup A_4$ where $A_r \subseteq [n]_r$ for $r \in \{1,4\}$, the assumption $|A| > (\frac{2}{5}-\eta) \cdot n$ implies that $|[n]_r \setminus A_r| < \eta \cdot n$ for $r \in \{1,4\}$. To obtain a contradiction, we show that $2n+1 \in 3A$.
Observe that by $n \neq 2~(\mod~5)$ there exist $r_1,r_2,r_3 \in \{1,4\}$ such that $r_1+r_2+r_3=2n+1~(\mod~5)$.
Further, for every $x \in [n]_{r_1} \cap [\frac{n}{2}+1,n]$ and $y \in [n]_{r_2} \cap [\frac{n}{2}+1,n]$ there exists $z \in [n]_{r_3}$ such that $x+y+z=2n+1$, hence there are at least $(\frac{n}{10})^2$ triples in $[n]_{r_1} \times [n]_{r_2} \times [n]_{r_3}$ with sum $2n+1$. However, at most $3 \cdot \eta n \cdot n = 3 \eta n^2$ of them involve elements not in $A$, so for a sufficiently small $\eta$ there must exist a triple of elements of $A$ whose sum is $2n+1$, as required.

Alternative~\eqref{itm:2} is handled similarly to the way alternative~\eqref{itm:1} is, so we omit the details.

For alternative~\eqref{itm:3}, notice that if all the elements of $A$ are odd then $2n+1 \notin 4A$.
By Theorem~\ref{thm:n/3}, using $2n+1 \notin 3A$, we get that $|A| \leq \lfloor \frac{1}{3}(n+1) \rfloor$, in contradiction.

For alternative~\eqref{itm:4}, use Lemma~\ref{lemma:n/3_min_large} to obtain that if $\min(A) \geq |A|> \frac{n}{3}$ then $|A| \leq \lfloor \frac{1}{3}(n+1) \rfloor$, again in contradiction.

Finally, notice that if alternative~\eqref{itm:5} holds then for $d = 200 \eta^{1/2}$ we have
\[A \subseteq \Big[\Big(\frac{1}{5}-d \Big) \cdot n,\Big(\frac{2}{5}+d \Big) \cdot n \Big] \cup \Big[\Big(\frac{4}{5}-d \Big) \cdot n,n \Big].\]
Denote $B = [\frac{n}{5},\frac{2n}{5}] \cup [\frac{4n}{5},n]$ and notice that there exists $D \subseteq B$ of size $|D| \leq 3d n$ such that $B \setminus D \subseteq A$. To obtain a contradiction, we show that $2n+1 \in 3A$.
Recall that there are at least $(\frac{n}{10})^2$ triples of elements of $B$ with sum $2n+1$ (see the proof of Theorem~\ref{thm:2n/5}). However, at most $3|D| \cdot |B| < 4dn^2$ of them involve elements of $D$, so for a sufficiently small $\eta$ there must exist a triple of elements of $B \setminus D \subseteq A$ whose sum is $2n+1$, and we are done.
\end{proof}

\subsection{Other Forbidden Sums}\label{sec:other_forbidden}

Although the current work focuses on sum-free sets $A \subseteq [n]$ that satisfy $2n+1 \notin \sum A$, it is natural to consider other forbidden sums, different from $2n+1$, as well.
We first observe that Theorem~\ref{thm:countingIntro} can be used to obtain a tight estimation for the number of sum-free subsets of $[n]$ with an odd forbidden sum around $2n$.

\begin{corollary}\label{cor:general_odd_even}
For every fixed integer $k \in \Z$, there are $\Theta(2^{n/3})$ sum-free sets $A \subseteq [n]$ satisfying $2(n+k)+1 \notin \sum A$.
\end{corollary}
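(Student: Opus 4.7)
The plan is to reduce to Theorem~\ref{thm:countingIntro} by relating the forbidden sum $2(n+k)+1$ on $[n]$ to the canonical forbidden sum $2m+1$ on $[m]$, where $m := n+k$. The argument splits naturally into the two cases $k \geq 0$ and $k < 0$, with the upper and lower bounds in each case handled separately.

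First I would treat $k \geq 0$, so $m \geq n$ and $[n] \subseteq [m]$. Every sum-free $A \subseteq [n]$ with $2m+1 \notin \sum A$ is in particular a sum-free subset of $[m]$ with the same forbidden sum, so Theorem~\ref{thm:countingIntro} applied to $[m]$ yields an upper bound of $O(2^{m/3}) = O(2^{n/3})$ (as $k$ is a fixed constant). For the matching lower bound I would consider the interval $I = [\lceil \tfrac{2}{3}(m+1) \rceil,\, n]$, which for large $n$ is non-empty of size at least $(n - 2k - 2)/3$. For any $A \subseteq I$: the sum of two elements is at least $\tfrac{4(m+1)}{3} > n$, so $A$ is automatically sum-free in $[n]$; a single element is at most $n < 2m+1$; a sum of two is at most $2n < 2m+1$; and a sum of three or more is at least $2(m+1) > 2m+1$. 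Hence every subset of $I$ is a valid set, yielding $2^{|I|} = \Omega(2^{n/3})$.

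Next I would handle $k < 0$, where $m < n$ and $n - m = |k|$ is a fixed positive constant. For the upper bound, given any valid $A \subseteq [n]$, decompose $A = B \cup C$ with $B = A \cap [m]$ and $C = A \cap [m+1,\,n]$. Then $B$ is a sum-free subset of $[m]$ with $2m+1 \notin \sum B$ (both properties inherited from $A$), so Theorem~\ref{thm:countingIntro} gives $O(2^{m/3})$ choices for $B$, while $|C| \leq |k|$ limits $C$ to at most $2^{|k|} = O(1)$ possibilities, for a total of $O(2^{m/3}) = O(2^{n/3})$. The lower bound is immediate: every sum-free $B \subseteq [m]$ with $2m+1 \notin \sum B$ also qualifies, viewed as a subset of $[n]$, so Theorem~\ref{thm:countingIntro} applied to $[m]$ supplies $\Omega(2^{m/3}) = \Omega(2^{n/3})$ such sets.

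The main obstacle is essentially bookkeeping: once one observes that the forbidden sum $2(n+k)+1$ on $[n]$ matches the canonical forbidden sum on $[n+k]$ and that $[n]$ and $[n+k]$ differ by only $|k| = O(1)$ elements, the reduction is routine. The only mildly subtle point, in the lower bound for $k \geq 0$, is to check that the shifted interval $I$ sits close enough to the top of $[n]$ for sum-freeness to be preserved when $I$ is viewed inside $[n]$, which is handled by the inequalities above.
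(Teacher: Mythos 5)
Your proposal is correct and follows essentially the same route as the paper: apply Theorem~\ref{thm:countingIntro} to $[n+k]$ and observe that passing between $[n]$ and $[n+k]$ costs only a multiplicative factor of $2^{|k|}$. The only cosmetic difference is that for $k\geq 0$ you exhibit the explicit interval $[\lceil\tfrac{2}{3}(n+k+1)\rceil,n]$ for the lower bound, whereas the paper absorbs both directions into the same constant-factor remark.
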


\begin{proof}
Apply Theorem~\ref{thm:countingIntro} to get that there are $\Theta(2^{(n+k)/3}) = \Theta(2^{n/3})$ sum-free sets $A \subseteq [n+k]$ satisfying $2(n+k)+1 \notin \sum A$.
Considering the subsets of $[n]$ with this property, the estimation is affected by no more than a multiplicative constant factor of $2^k$, so we are done.
\end{proof}

For the forbidden sum $2n$, we prove the following extremal result, whose proof resembles that of Theorem~\ref{thm:extremalIntro}.

\begin{theorem}\label{thm:extremal_2n}
For every integer $n$, the maximum size of a sum-free set $A \subseteq [n]$ such that $2n \notin \sum A$ is $\lfloor \frac{1}{3}(n-1) \rfloor$.
\end{theorem}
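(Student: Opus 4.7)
The plan is to mirror the proof of Theorem~\ref{thm:n/3}, with a small refinement needed only when $3 \mid n$. Let $A \subseteq [n]$ be a sum-free set with $2n \notin \sum A$. From $2n = n + n$ and $2n \notin 2A$ we get $n \notin A$, so $A \subseteq [n-1]$; from $n = a + b$ and $2n = 2a + 2b \in 4A$ together with $2n \notin 4A$ we get $n \notin A + A$. Setting $C = A \cup (A + A) \subseteq [2n-1] \setminus \{n\}$ and partitioning the ambient set into the $n - 1$ pairs $\{i, 2n - i\}$ for $i = 1, \ldots, n - 1$, no pair can be fully contained in $C$: if both $i, 2n - i \in C$, then $2n - i > \max(A)$ forces $2n - i \in A + A$, whence $2n = i + (2n - i) \in 3A \cup 4A$, a contradiction. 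Hence $|C| \leq n - 1$, and combined with $|C| = |A| + |A + A| \geq 3|A| - 1$ (Claim~\ref{claim:Freiman} and sum-freeness) this yields $|A| \leq \lfloor n/3 \rfloor$. When $n \not\equiv 0 \pmod{3}$ we have $\lfloor n/3 \rfloor = \lfloor (n-1)/3 \rfloor$, and the bound is proved.

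The boundary case $n = 3k$ requires excluding $|A| = k$. Assuming $|A| = k$ makes every inequality above tight; in particular $|A + A| = 2|A| - 1$, so by the equality case of Freiman's theorem $A$ is an arithmetic progression $\{a, a + d, \ldots, a + (k-1) d\}$. A short case analysis on $d$ produces a contradiction in every subcase. For $d = 1$, sum-freeness and $n \notin A + A$ force $a \in [\lfloor 3k/2 \rfloor + 1, 2k]$ (for $k \geq 2$), and then $2n = 6k$ lies in $3A = [3a, 3a + 3k - 3]$; the trivial case $k = 1$ is handled by noting that every $a \in [1, 2]$ divides $2n = 6$. For $d = 3$, $A$ sits in one of the residue classes $1$ or $2$ modulo $3$, and $6k \in 3A$ by direct inspection. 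For $d = 2$, the parameter $a$ is forced to be odd (even $a$ is incompatible with sum-freeness inside $[n-1]$), and then either $k$ is even and $n \in A + A$ contradicts the setup, or $k$ is odd and $6k \in 4A$ via an explicit four-term combination. Finally $d \geq 4$ is incompatible with $|A| = k$ and $A \subseteq [n-1]$ except for $k = 2, d = 4$, which forces $A = \{1, 5\}$ with $n = 6 \in A + A$.

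For the matching lower bound, take $B'_n = [\lceil (2n+1)/3 \rceil,\, n - 1]$ of size $\lfloor (n-1)/3 \rfloor$. It is sum-free because $\min(B'_n + B'_n) \geq 2 \lceil (2n+1)/3 \rceil > n - 1 = \max B'_n$; the bound $\max(2 B'_n) \leq 2n - 2$ rules out $2n \in 2 B'_n$; and $\min(k B'_n) \geq k(2n+1)/3 > 2n$ for every $k \geq 3$ rules out the remaining sums.

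The main obstacle is the tight case $n = 3k$: the pair-packing inequality $|C| \leq n - 1$ is saturated there, so one cannot conclude $|A| \leq k - 1$ directly and must invoke Freiman's theorem to recover the arithmetic-progression structure and then eliminate each common difference by hand. In Theorem~\ref{thm:n/3} this issue does not arise because the pairing $\{i, 2n + 1 - i\}$ for the odd forbidden sum has no central fixed point, providing a uniform pair's worth of slack that absorbs the rounding.
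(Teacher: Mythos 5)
Your argument is correct, but for the upper bound it takes a noticeably longer route than the paper's. The paper's proof adds one small observation to the pairing argument that eliminates your boundary case entirely: since $2n=\underbrace{1+\cdots+1}_{2n}\in (2n)A$ whenever $1\in A$, the hypothesis $2n\notin\sum A$ forces $1\notin A$, so $C=A\cup(A+A)\subseteq[2,2n-2]\setminus\{n\}$ and the pigeonhole runs over only the $n-2$ pairs $\{i,2n-i\}$ with $2\leq i\leq n-1$. This gives $|C|\leq n-2$, hence $3|A|-1\leq n-2$ and $|A|\leq\lfloor\frac{1}{3}(n-1)\rfloor$ in one step, with no need for the Freiman equality case or the analysis of the common difference $d$. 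Your version of the pairing only yields $|C|\leq n-1$ and so must separately exclude $|A|=k$ when $n=3k$; the arithmetic-progression case analysis you sketch does close this gap (I checked the subcases $d=1,2,3$ and $d\geq 4$, and each does produce $2n\in(3A)\cup(4A)\cup(6A)$ or $n\in A+A$), but it is an order of magnitude more work. The one thing your detour buys is a formally stronger statement: your argument uses only $2n\notin(2A)\cup(3A)\cup(4A)\cup(6A)$, in the spirit of the strengthening of Theorem~\ref{thm:extremalIntro} given in Theorem~\ref{thm:n/3}, whereas the paper's ``$1\notin A$'' step genuinely invokes an unbounded power of $A$. The lower-bound construction $[\lceil\frac{1}{3}(2n+1)\rceil,n-1]$ is identical to the paper's.
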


\begin{proof}
For the upper bound, let $A \subseteq [n]$ be a sum-free set such that $2n \notin \sum A$. Consider the set $C = A \cup (A+A)$.
By the sum-freeness of $A$ and Claim~\ref{claim:Freiman},
$|C| = |A| + |A + A| \geq 3|A|-1$.
We claim that $|C| \leq n-2$. To see this, notice that $1 \notin A$ and that $n \notin C$, hence $C \subseteq [2,2n-2] \setminus \{n\}$. If $|C| > n-2$ then, by the pigeonhole principle, there exists $2 \leq i \leq n-1$ for which $\{i, 2n-i\} \subseteq C$, implying that $2n \in \sum A$, in contradiction.
It follows that $3|A|-1 \leq |C| \leq n-2$, hence $|A| \leq \lfloor \frac{1}{3}(n-1) \rfloor$ as required.
The set $[\lceil \frac{1}{3}(2n+1) \rceil,n-1]$ implies a matching lower bound.
\end{proof}

We remark that it can be verified that the proof technique of Theorem~\ref{thm:3A4A5A} can be used to obtain a tight bound of $O(2^{n/3})$ on the number of sum-free sets $A \subseteq [n]$ satisfying $2n \notin (3A) \cup (4A) \cup (5A)$.
Combining it with the lower bound that follows from Theorem~\ref{thm:extremal_2n}, we get that there are $\Theta(2^{n/3})$ sum-free sets $A \subseteq [n]$ such that $2n \notin \sum A$ (and, as in Corollary~\ref{cor:general_odd_even}, one can derive such a bound for every even forbidden sum around $2n$).
However, we point out a significant difference between the forbidden sums $2n$ and $2n+1$.
We prove below a bound of $2^{(\frac{1}{3}+o(1)) \cdot n}$ on the number of sets $A \subseteq [n]$ satisfying $2n \notin \sum A$, which holds even without assuming the sum-freeness of the sets. This is in contrast to the forbidden sum $2n+1$ that is avoided by the $2^{\lfloor n/2 \rfloor}$ subsets of $[n]$ that consist of only even integers.

\begin{proposition}\label{prop:2n}
There are $2^{(\frac{1}{3} + o(1)) \cdot n}$ sets $A \subseteq [n]$ satisfying $2n \notin \sum A$.
\end{proposition}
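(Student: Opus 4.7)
The plan is to adapt the proof strategy of Theorem~\ref{thm:3A4A5A} to the even forbidden sum $2n$, while noting that the sum-freeness hypothesis is no longer required. The lower bound $\Omega(2^{n/3})$ is immediate from taking subsets of the extremal interval $B'_n = [\lceil (2n+1)/3 \rceil, n-1]$ appearing in Theorem~\ref{thm:extremal_2n}: any two elements of $B'_n$ sum to less than $2n$ and any three sum to more than $2n$, and $n \notin B'_n$. The starting observations for the matching upper bound are that $2n \notin \sum A$ implies $n \notin A$ (since the only way to write $2n$ as a sum of two elements of $[n]$ is $n+n$), $2n \notin 3A$, and more generally $2n \notin kA$ for every $k \geq 1$. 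In particular, $2n \notin 3A$ forces the disjointness of $A+A$ and $2n-A$ within $[2,2n-1]$, and an analogous disjointness holds between $A+A+A$ and $2n-A$ from $2n \notin 4A$.

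Following the three-step pattern of Section~\ref{sec:tight}, I would first prove a stability-type statement: for every $\eps>0$ there exists $\delta>0$ such that every $A \subseteq [n]$ with $2n \notin \sum A$ and $|A| \geq (\frac{1}{3}-\delta)n$ satisfies $|A \setminus B'_n| \leq \eps n$. With stability in hand, I would apply the container theorem (Theorem~\ref{thm:BMS_items}, Item~\ref{itm:BMS_2}) to the $5$-uniform hypergraph on $[n]$ whose hyperedges encode triples, quadruples, and quintuples summing to $2n$, concluding in analogy with Corollary~\ref{cor:far_from_extremal} that only $o(2^{n/3})$ sets $A$ satisfying $2n \notin \sum A$ have $|A \setminus B'_n| \geq \gamma n$. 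Finally, the sets with $|A \setminus B'_n| \leq \gamma n$ are counted directly: their total is at most $\binom{n}{\gamma n} \cdot 2^{|B'_n|}$, which equals $2^{(\frac{1}{3}+o(1))n}$ as $\gamma \to 0$.

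The main obstacle is proving the stability statement without sum-freeness. In the proof of Theorem~\ref{thm:stability}, sum-freeness yields $|A \cup (A+A)| = |A|+|A+A|$, which combined with $|A \cup (A+A)| \leq n$ places $|A+A|$ in the Freiman $3k-4$ regime. Here that identity is unavailable, but the disjointness of $C = A \cup (A+A)$ from $2n - C$ in $[1, 2n-1]$ (a consequence of $2n \notin 2A \cup 3A \cup 4A$) still gives $|C| \leq n-1$, hence $|A+A| \leq n-1 - |A| + |A \cap (A+A)|$. For $|A|$ close to $n/3$, the quantity $|A \cap (A+A)|$ is small for approximately sum-free sets, and the case where it is large can be handled by invoking the additional constraints $2n \notin kA$ for $k \in \{4,5\}$ to rule out non-sum-free structures that violate $B'_n$-containment. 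Once $|A+A|$ is shown to lie in the Freiman regime, Freiman's $3k-4$ theorem (Theorem~\ref{thm:Freiman}) contains $A$ in a short arithmetic progression, and the case analysis on the common difference $d \in \{1,2,3\}$ from Theorem~\ref{thm:stability} can be replicated to localize $A$ within $B'_n$.
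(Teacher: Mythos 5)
Your overall architecture (supersaturation/stability plus the hypergraph container theorem) is in the right family, but there is a genuine gap at the heart of your plan: the stability statement you propose to prove is \emph{false}. If $3 \nmid n$, take $A = \{x \in [n] : 3 \mid x\}$. Every element of $\sum A$ is divisible by $3$ while $2n$ is not, so $2n \notin \sum A$; yet $|A| = \lfloor n/3 \rfloor$ and $|A \setminus B'_n| \approx 2n/9$, so $A$ is nowhere near your single extremal set $B'_n = [\lceil \frac{1}{3}(2n+1)\rceil, n-1]$. This is exactly the kind of ``non-sum-free structure'' that your sketch claims can be ``ruled out by invoking $2n \notin kA$ for $k \in \{4,5\}$'' --- it cannot, since it satisfies $2n \notin kA$ for \emph{all} $k$. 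Concretely, in your planned replication of the $d \in \{1,2,3\}$ case analysis from Theorem~\ref{thm:stability}, the case $d=3$ with residue $r=0$ survives: there is no $t$ with $t \cdot 0 \equiv 2n \pmod 3$ when $3\nmid n$. You would at minimum have to enlarge $\calB_n$ to include this configuration (and then verify there are no further near-extremal configurations), which your proposal does not address.

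The paper's proof avoids all of this and is substantially simpler, precisely because the target is only $2^{(\frac13+o(1))n}$ rather than $\Theta(2^{n/3})$: it needs only a \emph{supersaturation} (density) statement, not stability. It first shows via a zero-sum theorem of Alon over $\Z_n$ (Theorem~\ref{thm:Alon}) that every $A\subseteq[n]$ with $|A| \ge (\frac13+\eps)n$ has $2n \in (3A)\cup(4A)\cup(6A)$, upgrades this to supersaturation using the removal-lemma corollary (Corollary~\ref{cor:3A4A_sets}), and then applies Item~\ref{itm:BMS_1} of Theorem~\ref{thm:BMS_items} ($\frac13$-density) to a $6$-uniform hypergraph, giving $|\calI(\calH_n,m)| \le \binom{(\frac13+\gamma)n}{m}$ and hence the claimed bound. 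No Freiman-type structure theorem, no identification of extremal sets, and no stability are required. Your lower-bound observation and the reductions $n \notin A$, $2n \notin kA$ are fine, and your final crude count $\sum_{j\le \gamma n}\binom{n}{j}\cdot 2^{|B'_n|} = 2^{(\frac13+o(1))n}$ would be acceptable for this weak bound if the stability step could be repaired; but as written the argument does not go through.
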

\noindent
The proof uses the following special case of a result of Alon~\cite{Alon87}.

\begin{theorem}[\cite{Alon87}]\label{thm:Alon}
For every $\eps>0$ and every sufficiently large integer $n$, every set $A \subseteq \Z_n$ of size $|A| \geq (\frac{1}{3}+\eps) \cdot n$ contains a set $B \subseteq A$ satisfying $0 < |B| \leq 3$ and $\sum_{b \in B}{b} = 0$ (in $\Z_n$).
\end{theorem}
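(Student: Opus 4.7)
The plan is to reduce to finding a three-element zero-sum subset, count such subsets via Fourier analysis, and handle the remaining structured case via Kneser's theorem.

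Two easy reductions handle $|B| \in \{1,2\}$: if $0 \in A$, take $B = \{0\}$; otherwise, if some $a \in A$ has $-a \in A$ with $a \neq -a$, take $B = \{a,-a\}$. Under these, one may assume $0 \notin A$ and $A \cap (-A) \subseteq \{a \in \Z_n : 2a = 0\}$, a set of size at most $2$, so it suffices to produce three distinct $a, b, c \in A$ with $a + b + c = 0$ in $\Z_n$.

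To count such triples I would use Fourier analysis. Writing $f = \mathbb{1}_A$ and $\hat f(x) = \sum_a f(a) \, e^{2\pi i a x/n}$, orthogonality together with Parseval $\sum_x |\hat f(x)|^2 = n|A|$ gives
\[
T := |\{(a,b,c) \in A^3 : a+b+c=0\}| = \frac{1}{n} \sum_{x \in \Z_n} \hat f(x)^3 \ \geq\ \frac{|A|^3}{n} - M \cdot |A|,
\]
where $M = \max_{x \neq 0}|\hat f(x)|$. The number of \emph{degenerate} ordered triples (two or more coordinates equal) is at most $3|A|+3$: in each of the three equality patterns (e.g., $a = b$) the repeated value forces the remaining coordinate (e.g., $c = -2a$), contributing at most $|A|$ triples per pattern, with a constant correction for the all-equal pattern $3a = 0$. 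Hence whenever $M \leq |A|^2/n - 4$, which under $|A| \geq (\tfrac{1}{3}+\epsilon)n$ is certainly implied by $M \leq (\tfrac{1}{3}+\tfrac{\epsilon}{2})^2 n$, a non-degenerate triple exists and we are done.

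The remaining and main obstacle is the complementary structured case $|\hat f(x_0)| > (\tfrac{1}{3}+\tfrac{\epsilon}{2})^2 n$ for some $x_0 \neq 0$, in which $A$ is heavily biased along a nontrivial coset progression in $\Z_n$. I would treat this using Kneser's theorem on $A + A$, which gives $|A+A| \geq 2|A| - |H|$ with $H = \mathrm{Stab}(A+A) \leq \Z_n$, splitting into two regimes. When $|H|$ is small, $|A+A| + |A|$ exceeds $n$ with room, so $(A+A) \cap (-A)$ is nonempty, and a careful comparison of its size with the at most $|A|$ elements $c \in -A$ whose only representations $-c = a+b$ satisfy $a = b$ (which lie in $-2A$) yields a non-degenerate triple. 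When $|H|$ is large, $H$ has bounded index in $\Z_n$, and one projects modulo $H$ to obtain a density-$(\tfrac{1}{3}+\epsilon)$ subset of the bounded group $\Z_n/H$, reducing to a finite case check followed by a lifting step that turns a zero sum modulo $H$ into an exact zero sum in $\Z_n$ using the abundance of $A$-elements inside some $H$-coset. This lifting, which must preserve membership in $A$, is the most delicate part of the proof.
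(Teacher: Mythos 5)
Your statement is one the paper does not prove at all: it is quoted as a special case of a result of Alon~\cite{Alon87}, so there is no internal proof to compare against and your argument has to stand on its own. Its first half does: the reduction to finding three \emph{distinct} elements of $A$ summing to zero (after disposing of $0\in A$ and of a pair $\{a,-a\}$), the identity $T=\frac1n\sum_x\hat f(x)^3$, the Parseval bound $T\ge |A|^3/n-M|A|$, and the count of at most $3|A|+3$ degenerate ordered triples are all correct, and the threshold $M\le(\frac13+\frac\eps2)^2n$ indeed forces a non-degenerate triple for large $n$. The problem is that the ``structured case'' $M>(\frac13+\frac\eps2)^2n$, which is where the whole difficulty of the theorem sits, is not actually handled: your Kneser argument never uses the large Fourier coefficient, so it is really an attempt to prove the reduced statement by Kneser alone, and both of its subcases have genuine gaps.

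Concretely: in the small-stabilizer case, Kneser plus inclusion--exclusion only gives $|(A+A)\cap(-A)|\ge 3|A|-n-|H|=\Theta(\eps n)$, whereas the set of ``bad'' sums you propose to compare it with (those $s$ whose only representations are diagonal, contained in the dilate $2\cdot A$) can have size up to $|A|\approx n/3$; since $\Theta(\eps n)\ll|A|$, a cardinality comparison cannot rule out that \emph{every} element of the intersection has only degenerate representations (and you must also exclude the representations $s=(-s)+2s$, i.e.\ a summand equal to $-s$, which you do not mention). What you would need is a lower bound on the number of representations of elements of $-A$, which is precisely the information you only control in the Fourier-uniform case. In the large-stabilizer case, projecting to $\Z_n/H$ only preserves the equation modulo $H$ ($\bar a+\bar b+\bar c=0$ gives $a+b+c\in H$, not $=0$), so the lifting step you flag as ``delicate'' is essentially the original problem restricted to $H$-cosets; worse, the statement you would need there is false unless you re-inject the antisymmetry from your reduction: for $n$ even the set of odd residues has density $\frac12$, the stabilizer of $A+A$ is the index-two subgroup, and there is no triple of distinct elements summing to $0$. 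Neither of your Kneser subcases invokes $0\notin A$ or $|A\cap(-A)|\le 2$, so as written the plan cannot close; the proposal is a sound opening followed by an unproven core.
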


\begin{proof}[ of Proposition~\ref{prop:2n}]
We first claim that for every $\eps>0$ and every sufficiently large integer $n$, every set $A \subseteq [n]$ of size $|A| \geq (\frac{1}{3}+\eps) \cdot n$ satisfies $2n \in (3A) \cup (4A) \cup (6A)$. Indeed, applying Theorem~\ref{thm:Alon} to the set $A \setminus \{n\}$ considered as a subset of $\Z_n$, it follows that $n \in (2A) \cup (3A)$ or $2n \in 3A$, thus $2n \in (3A) \cup (4A) \cup (6A)$.

We next obtain the following supersaturation statement: For every $\eps >0$ there exists $\delta >0$ such that for every sufficiently large integer $n$, every set $A \subseteq [n]$ of size $|A| \geq (\frac{1}{3}+\eps) \cdot n$ contains at least $\delta \cdot n^{k-1}$ $k$-subsets of $[n]$ with sum $2n$ for some $k \in \{3,4,6\}$. To see it, for a given $\eps >0$ define $\delta = \min( \delta''_3 (\frac{\eps}{6}), \delta''_4 (\frac{\eps}{6}), \delta''_6 (\frac{\eps}{6}))$ where $\delta''_3$, $\delta''_4$, and $\delta''_6$ are as in Corollary~\ref{cor:3A4A_sets}. Assume by contradiction that for a sufficiently large $n$, a set $A \subseteq [n]$ of size $|A| \geq (\frac{1}{3}+\eps) \cdot n$ contains fewer than $\delta \cdot n^{k-1}$ $k$-subsets of $[n]$ with sum $2n$ for every $k \in \{3,4,6\}$. Applying Corollary~\ref{cor:3A4A_sets} three times with $\ell = 2n$ and $k \in \{3,4,6\}$ we obtain that there exist sets $D_1, D_2, D_3 \subseteq A$, each of which is of size at most $\frac{\eps}{6} \cdot n$, such that $2n \notin 3(A \setminus D_1) \cup 4(A \setminus D_2) \cup 6(A \setminus D_3)$. Hence, the set $B = A \setminus (D_1 \cup D_2 \cup D_3)$ has size $|B| \geq (\frac{1}{3}+\frac{\eps}{2}) \cdot n$, and yet satisfies $2n \notin (3B) \cup (4B) \cup (6B)$, in contradiction.

Now, let $\calH = (\calH_n)_{n \in \N}$ be a sequence of $6$-uniform hypergraphs defined as follows. For every $n$, $\calH_n$ is the hypergraph on the vertex set $[n]$ whose hyperedges are all $6$-subsets of $[n]$ that contain $3$, $4$, or $6$ distinct elements of $[n]$ whose sum is $2n$.
It is straightforward to verify that $e(\calH_n) = \Theta(n^5)$, and that $\Delta_1(\calH_n) = \Theta(n^4)$, $\Delta_2(\calH_n) = \Theta(n^3)$, $\Delta_3(\calH_n) = \Theta(n^3)$, $\Delta_4(\calH_n) = \Theta(n^2)$, $\Delta_5(\calH_n) = \Theta(n)$, and $\Delta_6(\calH_n) = 1$.
Hence, for every sufficiently large $n$, every $\ell \in [6]$, some constant $c>0$, and $p_n = 1/\sqrt{n}$, we have
\[ \Delta_\ell(\calH_n) \leq c \cdot p_n^{\ell-1} \cdot \frac{e(\calH_n)}{v(\calH_n)}.\]
Since every set $A \subseteq [n]$ satisfying $2n \notin \sum A$ forms an independent set in $\calH_n$, it suffices to bound from above the size of $\calI(\calH_n)$. By the above supersaturation result, the sequence of hypergraphs $\calH$ is $\frac{1}{3}$-dense (recall Definition~\ref{def:dense_stable}, Item~\ref{itm:dense_def}).
Applying Item~\ref{itm:BMS_1} of Theorem~\ref{thm:BMS_items}, we obtain that for every $\gamma >0$ there exists $C>0$ such that for every sufficiently large $n$ and every $m \geq C p_n v(\calH_n) = C \sqrt{n}$,
\[ |\calI(\calH_n,m)| \leq {\binom {(\frac{1}{3}+\gamma)n} {m}}.\]
It follows that
\[ |\calI(\calH_n)| \leq \sum_{m=0}^{C\sqrt{n}}{{\binom {n}{m}}} + \sum_{m=C\sqrt{n}}^{(\frac{1}{3}+\gamma)n}{ {\binom {(\frac{1}{3}+\gamma)n} {m}} } \leq n^{O(\sqrt{n})}+2^{(\frac{1}{3}+\gamma) \cdot n} \leq 2^{(\frac{1}{3}+2\gamma) \cdot n}.\]
Since the bound holds for every $\gamma >0$, the result follows.
\end{proof}

\section{Symmetric Complete Sum-free Sets in Cyclic Groups}\label{sec:Z_p}

In this section we relate our study of sum-free subsets of $[n]$ with $2n+1$ as a forbidden sum to counting symmetric complete sum-free sets in cyclic groups of prime order (see Section~\ref{sec:additive} for the definitions).

For a prime $p$, an even integer $s \in [\frac{p+3}{4},\frac{p-1}{3}]$, and a set of integers $T \subseteq [0,2t-1]$ where $t = (p-3s+1)/2 \geq 1$, consider the set $S_T \subseteq \Z_p$ defined by
\[ S_T = [p-2s+1,2s-1] \cup \pm (s+T).\]
Observe that $|T|=t$ if and only if $|S_T| = (4s-p-1)+2t = s$, and that $0 \in T$ if and only if $s \in S_T$.
As mentioned earlier, it was shown in~\cite{HavivLsf17} that for every sufficiently large prime $p$, every symmetric complete sum-free subset of $\Z_p$ of even size $s \geq 0.318p$ is, up to an automorphism, of the form $S_T$ for some set of integers $T \subseteq [0,2t-1]$ of size $t$. Moreover, those sets $T$ for which $S_T$ is complete and sum-free were fully characterized in~\cite{HavivLsf17}.
To state the characterization result, we need the following definition.

\begin{definition}\label{def:special}
For an integer $t \geq 1$ we say that a set $T \subseteq [0,2t-1]$ is {\em $t$-special} if it satisfies
\begin{enumerate}
  \item $|T| = t$,
  \item $2t-1 \notin 3T$, and
  \item $[0,2t-1+\min(T)] \setminus (2t-1-T) \subseteq T+T$.
\end{enumerate}
Note that the addition is over the integers.
\end{definition}

\begin{theorem}[\cite{HavivLsf17}]\label{thm:Final_Char}
For every sufficiently large prime $p$ and every even integer $s \in [0.318p,\frac{p-1}{3}]$, the following holds.
The symmetric complete sum-free subsets of $\Z_p$ of size $s$ are precisely all the dilations of the sets $S_T$ for $t$-special sets $T \subseteq [0,2t-1]$ where $t=(p-3s+1)/2$.
\end{theorem}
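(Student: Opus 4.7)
The plan is to use the structural theorem of Deshouillers and Lev~\cite{DLev08} for large sum-free subsets of $\Z_p$ as the foundation, then successively exploit the symmetry and completeness hypotheses to pin down the exact form $S_T$ and derive the three conditions in Definition~\ref{def:special}. Let $S \subseteq \Z_p$ be a symmetric complete sum-free set of size $s \geq 0.318 p$. By~\cite{DLev08}, there exists a dilation $\lambda \in \Z_p^*$ such that $\lambda S$ is contained in a bounded-length interval; identifying $\Z_p$ with $\{-(p-1)/2,\ldots,(p-1)/2\}$ we may take this interval to be around $0$. Since dilation preserves symmetry, sum-freeness, completeness, and size, we may assume $S$ itself lies in such an interval, and $S = -S$ forces the interval to be centered at $0$.

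Next, I would decompose $S = C \sqcup O$ into a central part $C$ and an outer part $O$. Sum-freeness excludes the presence of small nonzero elements in $S$ (two of them would sum back into $S$), while completeness forces a central window around $0$ to lie entirely in $S$; combining these observations with the size $|S|=s$ should pin down $C$ to be exactly $[p-2s+1, 2s-1]$ (equivalently $[-(2s-1), 2s-1]$) of size $4s-p-1$, and force the remaining $2t$ elements of $O$, where $t = (p-3s+1)/2$, into the two rings $\pm[s, s+2t-1]$. Using the symmetry $O = -O$ and writing the positive outer elements as $s+T$, we obtain $O = \pm(s+T)$ for some $T \subseteq [0, 2t-1]$ with $|T|=t$, establishing the form $S = S_T$.

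Finally, I would translate the $\Z_p$-conditions of sum-freeness and completeness of $S_T$ into the integer conditions on $T$. Most potential sum-free violations are ruled out by the disjoint ranges of $C$ and $\pm(s+T)$; the only surviving constraint is the equation $(s+t_1) + (s+t_2) \equiv -(s+t_3) \pmod{p}$, which rearranges over the integers to $t_1+t_2+t_3 = 2t-1$ and hence to the condition $2t-1 \notin 3T$. For completeness, the nontrivial residues are those of the form $2s+r$ for $r \in [0, 2t-1+\min(T)]$; such a residue already lies in $S_T$ precisely when $r \in 2t-1-T$ (in that case $2s+r \equiv -(s+(2t-1-r)) \pmod p$ lies in $-(s+T)$), and otherwise it must be representable as $(s+t_i)+(s+t_j)$, i.e., $r \in T+T$. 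This yields the containment $[0, 2t-1+\min(T)] \setminus (2t-1-T) \subseteq T+T$. The main obstacle will be the bookkeeping in the middle step: upgrading the approximate containment supplied by~\cite{DLev08} to the exact equality $C = [p-2s+1, 2s-1]$ and the precise location $O \subseteq \pm[s, s+2t-1]$, using the completeness, sum-freeness, and size constraints together. Once the form $S_T$ is nailed down, the third step is a routine algebraic translation.
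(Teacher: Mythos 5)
First, a point of order: the paper does not prove this theorem at all --- it is imported verbatim from~\cite{HavivLsf17} (note the citation in the theorem header), so there is no internal proof to compare against. What can be said is that your overall strategy matches the one the paper explicitly attributes to~\cite{HavivLsf17}: reduce via the Deshouillers--Lev structure theorem to a dilate contained in a short interval, use symmetry, completeness, sum-freeness and $|S|=s$ to force the exact shape $S_T$, and then translate the sum-freeness and completeness of $S_T$ into the integer conditions on $T$. Your final translation step is essentially right: with $S_T=[p-2s+1,2s-1]\cup\pm(s+T)$ and $s+T\subseteq[s,p-2s]$, the only relation $x+y=z$ in $S_T$ not excluded by range considerations is $(s+t_1)+(s+t_2)\equiv-(s+t_3)$, i.e.\ $t_1+t_2+t_3=2t-1$, and the only residues whose membership in $2S_T$ is in doubt are $\pm(2s+r)$ for $r\in[0,2t-1+\min(T)]$, yielding the third condition of Definition~\ref{def:special}. (To make this an equivalence you must also check that these residues are not covered by the remaining pieces of $2S_T$, e.g.\ by $I+(s+T)$ with $I$ the central interval --- a short but necessary computation you omit.)

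The genuine gap is the middle step, which you yourself flag as ``the main obstacle'' and then do not carry out; this is precisely where the content of~\cite{HavivLsf17} lies. Moreover, the mechanism you propose for it is wrong as written. After dilating, the Deshouillers--Lev interval containing $S$ is a \emph{central} interval in the sense of being symmetric under negation and centred at $p/2$ (roughly the middle third $[s,p-s]$), not an interval around $0$; and a sum-free set can never contain a ``central window around $0$,'' since for small positive $x,y$ in such a window $x+y$ would lie in it as well. The correct assertion is that completeness forces the window $[p-2s+1,2s-1]$ \emph{around $p/2$} into $S$: if $S\subseteq[s,p-s]$ then $2S\subseteq[2s,p-1]\cup[0,p-2s]$ misses that window, so its elements cannot be sums and must belong to $S$. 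Sum-freeness and $|S|=s$ then confine the remaining $2t$ elements to $\pm[s,p-2s]$, and symmetry writes them as $\pm(s+T)$. Even with the orientation corrected, upgrading the approximate containment of~\cite{DLev08} to these exact statements (and seeing where the evenness of $s$ and the threshold $0.318$ are used) requires real work, so the proposal should be regarded as a plausible outline of the external proof rather than a proof.
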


The above theorem reduces the challenge of counting the symmetric complete sum-free subsets of $\Z_p$ of a sufficiently large size to counting the $t$-special sets for an appropriate $t$ (There is an exact multiplicative gap of $(p-1)/2$ between the two; see~\cite[Theorem~1.2]{HavivLsf17}). As an application of Theorem~\ref{thm:countingIntro}, we prove below the following tight estimation for the number of $t$-special sets $T \subseteq [0,2t-1]$ that satisfy $0 \in T$.

\begin{theorem}\label{thm:t-special}
There are $\Theta(2^{t/3})$ $t$-special sets $T \subseteq [0,2t-1]$ satisfying $0 \in T$.
\end{theorem}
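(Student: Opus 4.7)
My plan is to reformulate the counting as a problem about subsets of $[1, t-1]$, apply Theorem~\ref{thm:countingIntro} to the main case, and handle a residual structured case by hand. Set $A = T \setminus \{0\}$ and $B = A \cap [1, t-1]$. Using $0 \in T$, condition~2 of Definition~\ref{def:special} ($2t-1 \notin 3T$) splits into the three statements $2t-1 \notin T$, $2t-1 \notin A+A$, and $2t-1 \notin 3A$. The middle assertion is equivalent to $A \cap ((2t-1)-A) = \emptyset$, which together with $|A|=t-1$ and $A\subseteq[1,2t-2]$ forces $A$ to be a transversal of the pairs $\{x,\,2t-1-x\}$ for $x \in [1,t-1]$; condition~3 then reduces to $T \subseteq T+T$, which is automatic from $0 \in T$. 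Writing $A = B \cup ((2t-1)-\bar B)$ for $\bar B = [1,t-1]\setminus B$ and enumerating how three summands from $A$ can reach $2t-1$, the range constraints rule out the configurations with one low and two high, or all three high; the surviving cases (three low, and two low plus one high) show that $2t-1 \in 3A$ occurs iff either three elements of $B$ sum to $2t-1$ or two elements of $B$ sum to some element of $\bar B$. Consequently the $t$-special sets with $0 \in T$ are in bijection with subsets $B \subseteq [1, t-1]$ satisfying the closure condition $(B+B) \cap [1, t-1] \subseteq B$ and the forbidden-sum condition $2t-1 \notin 3B$. For the lower bound, every $B \subseteq [\lceil 2t/3 \rceil + 1, t-1]$ satisfies both, giving $\Omega(2^{t/3})$ sets.

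For the upper bound I would split on $m := \min B$ (with $m = \infty$ if $B = \emptyset$). If $m \geq \lceil t/2 \rceil$, every pair of elements of $B$ sums to at least $t$, so $B$ is sum-free and $kB \subseteq [km, k(t-1)] \subseteq [2t, \infty)$ for all $k \geq 4$; combined with the trivial exclusions $2t-1 \notin B$, $2t-1 \notin 2B$ and the forbidden-sum condition at $k = 3$, we obtain $2t-1 \notin \sum B$. Thus $B$ is a sum-free subset of $[t-1]$ with $2(t-1)+1 = 2t-1$ as a forbidden sum, and Theorem~\ref{thm:countingIntro} applied with $n = t-1$ bounds the number of such $B$ by $O(2^{t/3})$.

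The residual case $m < \lceil t/2 \rceil$ is the main technical obstacle. Closure forces $B$ to contain every multiple of $m$ in $[1, t-1]$ and more generally to equal the intersection with $[1, t-1]$ of some numerical semigroup. I would first argue that the forbidden-sum condition rules out $\gcd(B) = 1$: if $\gcd(B) = 1$ then Sylvester--Frobenius implies $B$ contains a long final interval $[F+1, t-1]$, and for any $m < t/2$ one can realize $2t-1 = m + b + b'$ with $b, b' \in [F+1, t-1]$ (the required sum $b + b' = 2t-1-m$ lies in $[2F+2, 2t-2]$, since the Sylvester--Frobenius bound keeps $F$ small enough), contradicting the forbidden-sum condition; the finitely many edge cases not handled by this decomposition are enumerated directly. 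Consequently $B \subseteq d\Z \cap [1, t-1]$ for some $d \geq 2$, and dividing by $d$ reduces to counting sum-closed subsets of $[1, \lfloor (t-1)/d \rfloor]$. A direct bound---splitting on the minimum within the shrunken interval and estimating the contribution of sets with a given minimum $m'$ by roughly $(M/m')^{m'-1}$---shows that the number of sum-closed subsets of $[1, M]$ is $O(2^{cM})$ for some constant $c < 2/3$. Summing over $d \geq 2$ contributes only $o(2^{t/3})$, and together with the main case this matches the lower bound.
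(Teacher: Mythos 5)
Your reduction is correct and coincides with the paper's: the transversal structure of $A=T\setminus\{0\}$, the elimination of condition~3, and the bijection between $t$-special sets containing $0$ and sets $B\subseteq[t-1]$ that are closed under addition (within range) and satisfy $2t-1\notin 3B$ are exactly Claim~\ref{claim:T_special} and Lemma~\ref{lemma:special-closed}. The lower bound and the case $\min(B)\geq t/2$ (where $B$ is vacuously sum-free and $2t-1\notin\sum B$, so Theorem~\ref{thm:countingIntro} applies with $n=t-1$) are also fine. The divergence, and the gap, is in how you handle the remaining sets. The paper does not case-split at all: Lemma~\ref{lemma:closed-sumfree} injects \emph{every} sum-closed $B$ with $2t-1\notin 3B$ into the family of sum-free sets with forbidden sum $2t-1$, by deleting (from largest to smallest) each element that is a sum of two smaller elements, and then invokes Theorem~\ref{thm:countingIntro} once; the key observation is that any representation $2n+1=\sum x_i$ can be regrouped into three blocks each of sum at most $n$, so closure under addition pulls the contradiction back to $2n+1\in 3B$.

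Your treatment of the case $\min(B)<t/2$ does not work. The claim that $\gcd(B)=1$ then forces $2t-1\in 3B$ is false: take $t=100$ and $B=\{40,80,99\}$. This set is closed under addition in $[1,99]$ (the only in-range sum is $40+40=80$), has $\gcd(B)=1$ and $\min(B)=40<50$, yet $3B=\{120,160,179,200,219,238,\dots\}$ misses $199$. More generally, for any integer $m$ with $(2t-1)/5<m<t/2$, $m\neq(2t-1)/4$, every set $B=\{m,2m\}\cup S$ with $S\subseteq[2m+1,t-1]$ lies in the family (one checks $m+m+z$, $m+2m+z$, $m+x+y$ and all-high triples each overshoot or undershoot $2t-1$), giving roughly $2^{t/5}$ sets for the smallest admissible $m$ --- so these are not ``finitely many edge cases.'' The reason your argument breaks is that the Sylvester--Frobenius bound does not keep the Frobenius number small relative to $t$ once $\min(B)$ is large: for $\langle 40,99\rangle$ the Frobenius number is $3821\gg 99$, so the promised final interval $[F+1,t-1]$ is empty and the decomposition $2t-1=m+b+b'$ cannot be realized. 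Consequently the reduction to $\gcd(B)\geq 2$ fails, the residual count over $d\geq 2$ does not cover these sets, and you are left with an unbounded family (numerical-semigroup truncations with large multiplicity and $\gcd$ equal to $1$) for which you have no estimate. Replacing this case analysis by the paper's deletion injection closes the gap.
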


This easily implies, for a sufficiently large $s$, a tight estimation for the number of symmetric complete sum-free sets $S_T \subseteq \Z_p$ of size $s$ that satisfy $s \in S_T$, confirming Theorem~\ref{thm:S_T_with_sIntro}.

\begin{proof}[ of Theorem~\ref{thm:S_T_with_sIntro}]
Let $p$ be a sufficiently large prime and let $s \in [0.318p,\frac{p-1}{3}]$ be an even integer. Denote $t=(p-3s+1)/2$.
By Theorem~\ref{thm:Final_Char}, the number of symmetric complete sum-free sets $S_T \subseteq \Z_p$ of size $s$ that satisfy $s \in S_T$ is equal to the number of $t$-special sets that satisfy $0 \in T$.
By Theorem~\ref{thm:t-special}, the latter is equal to $\Theta(2^{t/3}) = \Theta(2^{(p-3s)/6})$, so we are done.
\end{proof}

\subsection{Counting Special Sets -- Proof of Theorem~\ref{thm:t-special}}

We start with the following simple claim that shows that for a set that includes $0$, the first and second conditions in Definition~\ref{def:special} imply the third.

\begin{claim}\label{claim:T_special}
For every integer $t \geq 1$ and a set $T \subseteq [0,2t-1]$ satisfying $0 \in T$, $|T|=t$, and $2t-1 \notin 3T$, the following holds.
\begin{enumerate}
  \item\label{itm:T_1} For every $\ell \in [0,t-1]$ exactly one of the elements $\ell$ and $2t-1-\ell$ belongs to $T$.
  \item\label{itm:T_2} For every $\ell_1,\ell_2 \in T$, if $\ell_1+\ell_2 \in [0,2t-1]$ then $\ell_1+\ell_2 \in T$.
  \item\label{itm:T_3} $T$ is $t$-special.
\end{enumerate}
\end{claim}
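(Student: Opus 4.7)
The plan is to prove the three items in order, with Item~1 giving the key pairing structure, Item~2 being a short consequence of Item~1 plus the hypothesis $2t-1 \notin 3T$, and Item~3 following almost immediately from Item~1 once we use $0 \in T$.

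For Item~1, I would observe that $[0,2t-1]$ partitions into the $t$ pairs $\{\ell, 2t-1-\ell\}$ for $\ell \in [0,t-1]$. If both elements of such a pair belonged to $T$, then together with $0 \in T$ we would have $\ell + (2t-1-\ell) + 0 = 2t-1$, contradicting $2t-1 \notin 3T$ (recall that in the notation $3T$ the three summands need not be distinct). Hence each pair contributes at most one element to $T$, and since $|T|=t$ and there are $t$ pairs, each pair contributes exactly one element.

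For Item~2, suppose for contradiction that $\ell_1, \ell_2 \in T$, that $\ell_1+\ell_2 \in [0,2t-1]$, and that $\ell_1+\ell_2 \notin T$. Set $\ell_3 = 2t-1-(\ell_1+\ell_2) \in [0,2t-1]$. By Item~1 applied to the pair containing $\ell_1+\ell_2$ and $\ell_3$, the fact that $\ell_1+\ell_2 \notin T$ forces $\ell_3 \in T$. But then $\ell_1+\ell_2+\ell_3 = 2t-1 \in 3T$, again contradicting the hypothesis. (One small bookkeeping point: depending on whether $\ell_1+\ell_2$ lies in $[0,t-1]$ or $[t,2t-1]$, the role of ``$\ell$'' in the pairing of Item~1 is played by one or the other of $\ell_1+\ell_2$ and $\ell_3$; both cases give the same conclusion.)

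For Item~3, since $0 \in T$ we have $\min(T) = 0$, so the required inclusion reduces to $[0,2t-1] \setminus (2t-1-T) \subseteq T+T$. By Item~1, the sets $T$ and $2t-1-T$ are disjoint and together exhaust $[0,2t-1]$, so $[0,2t-1] \setminus (2t-1-T) = T$. Finally, $T \subseteq T+T$ because any $\ell \in T$ can be written as $\ell = 0+\ell$ with $0, \ell \in T$. The only part that required any real thought is Item~2, and even there the main obstacle is only making sure to use the convention that $3T$ allows repeated summands; no deeper machinery is needed.
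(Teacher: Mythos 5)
Your proposal is correct and follows essentially the same route as the paper's proof: the pairing of $[0,2t-1]$ into the $t$ pairs $\{\ell,2t-1-\ell\}$ combined with $0\in T$ and the counting $|T|=t$ for Item~1, the observation that $2t-1\notin 3T$ forces $(2t-1)-(\ell_1+\ell_2)\notin T$ for Item~2, and $\min(T)=0$ together with $\ell=\ell+0$ for Item~3. The only difference is cosmetic (you phrase Item~2 as a contradiction where the paper argues directly), so there is nothing to add.
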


\begin{proof}
Assume that a set $T \subseteq [0,2t-1]$ satisfies $0 \in T$, $|T|=t$, and $2t-1 \notin 3T$.
\begin{enumerate}
  \item Let $\ell \in [0,t-1]$. The elements $\ell$ and $2t-1-\ell$ cannot both belong to $T$ as their sum, together with $0 \in T$, is $2t-1$, which does not belong to $3T$. In addition, if $T$ does not include one of $\ell$ and $2t-1-\ell$ for some $\ell \in [0,t-1]$ then its size cannot reach $t$. This completes the proof of the first item.
  \item For $\ell_1,\ell_2 \in T$ assume that $\ell_1+\ell_2 \in [0,2t-1]$. By $2t-1 \notin 3T$, it follows that $(2t-1)-(\ell_1+\ell_2) \notin T$, hence by Item~\ref{itm:T_1}, $\ell_1+\ell_2 \in T$.
  \item To prove that $T$ is $t$-special it suffices to show that $[0,2t-1] \setminus (2t-1-T) \subseteq T+T$. Indeed, every $\ell \in [0,2t-1]$ for which $2t-1-\ell \notin T$ satisfies, by Item~\ref{itm:T_1}, $\ell = \ell+0 \in T+T$, and we are done.
\end{enumerate}
\end{proof}

Consider the following notion of sets that are closed under addition.

\begin{definition}
For an integer $n$, we say that a set $A \subseteq [n]$ is {\em closed under addition} if for every $x,y \in A$ such that $x+y \in [n]$ we have $x+y \in A$.
Let $\calA_n$ denote the collection of all sets $A \subseteq [n]$ that are closed under addition and satisfy $2n+1 \notin 3 A$.
\end{definition}

In the following lemma, we relate counting special sets that include $0$ to counting sets that are closed under addition.

\begin{lemma}\label{lemma:special-closed}
For an integer $t \geq 1$, let $\calT_t$ be the collection of all $t$-special sets $T \subseteq [0,2t-1]$ satisfying $0 \in T$. Then, $|\calT_t| = |\calA_{t-1}|$.
\end{lemma}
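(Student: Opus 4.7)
The plan is to exhibit an explicit bijection $\Phi \colon \calT_t \to \calA_{t-1}$ by sending each $t$-special set $T$ containing $0$ to its lower half $\Phi(T) = T \cap [1,t-1]$. Claim~3.22 (the unlabeled claim preceding this lemma) is the main engine: its Item~\ref{itm:T_1} says that for each $\ell \in [0,t-1]$ exactly one of $\ell$ and $2t-1-\ell$ is in $T$, so $T$ is reconstructible from $T \cap [0,t-1] = \{0\} \cup \Phi(T)$. In particular, $\Phi$ is injective. Its Item~\ref{itm:T_2} says that $T$ is closed under addition whenever the sum lies in $[0,2t-1]$, which immediately gives that $A = \Phi(T) \subseteq [1,t-1]$ is closed under addition in the sense required by the definition of $\calA_{t-1}$. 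The condition $2(t-1)+1 = 2t-1 \notin 3A$ is inherited from $2t-1 \notin 3T$ since $A \subseteq T$. Hence $\Phi(T) \in \calA_{t-1}$.

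For the inverse, given $A \in \calA_{t-1}$, I would define
\[ T = \{0\} \cup A \cup \bigl\{ 2t-1-\ell ~\bigm|~ \ell \in [0,t-1] \setminus (\{0\} \cup A) \bigr\}, \]
which by construction contains $0$, has $|T|=t$ (exactly one representative from each pair $\{\ell, 2t-1-\ell\}$ with $\ell \in [0,t-1]$), and satisfies $\Phi(T) = A$. By Item~\ref{itm:T_3} of Claim~3.22, it then suffices to verify that $2t-1 \notin 3T$ to conclude that $T \in \calT_t$.

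The main technical point is thus to rule out a representation $2t-1 = \ell_1 + \ell_2 + \ell_3$ with $\ell_i \in T$. I would split into cases by how many $\ell_i$ lie in the upper half $[t,2t-1]$. Three or two upper-half summands are impossible by size. If exactly one $\ell_i$, say $\ell_3 = 2t-1-m$ with $m \in [0,t-1] \setminus (\{0\} \cup A)$, is in the upper half, then the remaining equation forces $\ell_1 + \ell_2 = m \in [1,t-1]$, with $\ell_1,\ell_2 \in \{0\} \cup A$; whether one of them is $0$ or both are in $A$, closure of $A$ under addition (since $m \leq t-1$) places $m$ in $A$, contradicting $m \notin A$. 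If all three $\ell_i$ lie in the lower half $\{0\} \cup A$, then since $2t-1 > 2(t-1)$ at most one can be $0$; discarding the degenerate possibility, all three are in $A$, which directly contradicts $2t-1 \notin 3A$ built into the definition of $\calA_{t-1}$.

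I expect no serious obstacle beyond organizing the case analysis of the previous paragraph cleanly; the bijection itself is essentially forced once Claim~3.22 is in hand, and the tricky step is precisely that the ``no triple sum equals $2t-1$'' property transfers between $A$ and its reflected completion $T$. Once $2t-1 \notin 3T$ is established, $|T|=t$ and $0 \in T$ hold by construction, and Item~\ref{itm:T_3} of Claim~3.22 gives the third defining property of $t$-specialness for free, completing the bijection and hence the equality $|\calT_t| = |\calA_{t-1}|$.
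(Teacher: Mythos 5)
Your proposal is correct and follows essentially the same route as the paper: the same map $T \mapsto T \cap [1,t-1]$, the same appeal to the preceding claim for well-definedness, injectivity, and the reduction of $t$-specialness to $2t-1 \notin 3T$, and the same reflected-completion inverse with the same case analysis (the paper phrases the key case via "no two elements of $T$ sum to $2t-1$" rather than "$m \notin A$", but these are the same contradiction). No gaps.
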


\begin{proof}
For an integer $t \geq 1$, consider the function $g$ that maps every set $T \in \calT_t$ to the set \[g(T) = T \cap [t-1].\]
To prove the lemma, it suffices to show that $g$ is a bijection from $\calT_t$ to $\calA_{t-1}$.
We first observe that for every $T \in \calT_t$ we have $g(T) \in \calA_{t-1}$.
Indeed, for $T \in \calT_t$ we have $2t-1 \notin 3T$ and thus, by $g(T) \subseteq T$, we also have $2t-1 \notin 3g(T)$. Further, using Item~\ref{itm:T_2} of Claim~\ref{claim:T_special}, it follows that $g(T)$ is closed under addition as a subset of $[t-1]$, hence $g(T) \in \calA_{t-1}$.

The function $g$ is injective by Item~\ref{itm:T_1} of Claim~\ref{claim:T_special} and the fact that $0 \in T$ for every $T \in \calT_t$.
To prove that $g$ is surjective, take a set $A \in \calA_{t-1}$ and define
\[T = \{0\} \cup A \cup \Big \{ (2t-1)-\ell ~\Big |~ \ell \in [t-1] \setminus A \Big \}.\]
We clearly have $g(T)=A$, $0 \in T$, and $|T|=t$. By Item~\ref{itm:T_3} of Claim~\ref{claim:T_special}, to prove that $T \in \calT_t$ it suffices to show that $2t-1 \notin 3T$.
Assume by contradiction that there are $x_1,x_2,x_3 \in T$ such that $x_1+x_2+x_3 =2t-1$.
Observe, using $2t-1 \notin 3A$, that exactly one of $x_1,x_2,x_3$, say $x_3$, is in $[t,2t-1]$.
Since $2t-1 \notin T$, it follows that $x_3 \in [t,2t-2]$ and thus $x_1 + x_2 \in [t-1]$.
Using the fact that $A$ is closed under addition, we get that $x_1+x_2 \in T$. However, $x_3 = (2t-1)-(x_1+x_2) \in T$, in contradiction to the fact that $T$, by definition, contains no two elements whose sum is $2t-1$, and we are done.
\end{proof}

We turn to estimate the size of $\calA_n$.
To do so, we need the following lemma.

\begin{lemma}\label{lemma:closed-sumfree}
For an integer $n \geq 1$, let $\calD_n$ be the collection of all sum-free sets $D \subseteq [n]$ that satisfy $2n+1 \notin \sum D$. Then, $|\calA_n| \leq |\calD_n|$.
\end{lemma}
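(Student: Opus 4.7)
The natural strategy is to construct an explicit injection $\phi \colon \calA_n \to \calD_n$, which immediately gives $|\calA_n| \leq |\calD_n|$. Given $A \in \calA_n$, set $\phi(A) = D := A \setminus (A+A)$, the ``irreducible'' elements of $A$. Since $D \subseteq A$, one has $D+D \subseteq A+A$, and therefore $D \cap (D+D) \subseteq D \cap (A+A) = \emptyset$, so $D$ is sum-free.

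To show $2n+1 \notin \sum D$, I will argue by contradiction: suppose $2n+1 = \sum_{i=1}^{k} d_i$ with $d_i \in D$. The cases $k \leq 2$ are impossible because $2n+1 > 2n$. The key observation is that whenever $k \geq 4$, some pair $d_i + d_j$ satisfies $d_i + d_j \leq n$: otherwise every pair sums to at least $n+1$, and summing over all $\binom{k}{2}$ pairs yields $(k-1)(2n+1) \geq \binom{k}{2}(n+1)$, which forces $k \leq 4 - \tfrac{2}{n+1} < 4$. Since $A$ is closed under addition, such a pair can be replaced by the single element $d_i + d_j \in A$, producing $k-1$ elements of $A$ (no longer required to lie in $D$) still summing to $2n+1$. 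Iterating this reduction brings us down to three elements of $A$ whose sum is $2n+1$, contradicting $2n+1 \notin 3A$. Hence $\phi(A) \in \calD_n$.

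For injectivity, I will show that $A$ can be reconstructed from $D$ via the identity $A = (\sum D) \cap [n]$. The inclusion $A \subseteq \sum D$ is proved by induction on the value of the element: if $a \in A \setminus D$, then $a = x+y$ with $x, y \in A$ strictly smaller than $a$, hence in $\sum D$ by induction, so $a \in \sum D$ as well. For the reverse inclusion, given $s = \sum_{i=1}^{k} d_i \leq n$ with $d_i \in D \subseteq A$, one combines two summands at a time; because every partial sum is at most $s \leq n$, closure of $A$ under addition keeps each intermediate element inside $A$, so $s \in A$. Consequently $\phi$ is injective and the lemma follows.

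The only genuine technical point is the combining argument used to rule out $2n+1 \in \sum D$, which rests on the elementary pair-counting inequality above together with closure of $A$; the rest is essentially bookkeeping.
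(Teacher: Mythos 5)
Your proposal is correct and follows essentially the same route as the paper: your map $A \mapsto A \setminus (A+A)$ coincides with the paper's greedy deletion (since all elements are positive, any element of $A$ lying in $A+A$ is a sum of two \emph{smaller} elements of $A$), and your iterative pair-merging argument for $2n+1 \notin \sum D$ is the same combining idea the paper uses to split the summands into three blocks of sum at most $n$. The only cosmetic difference is in the injectivity step, where you exhibit the explicit inverse $A = (\sum D) \cap [n]$ rather than comparing two preimages at their minimal differing element; both arguments are valid.
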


\begin{proof}
For an integer $n \geq 1$, consider the function $f$ that maps every set $A \in \calA_n$ to the set $f(A)$ defined by the following process: Go over the elements of $A$ from the largest one to the smallest one, and exclude every element of $A$ that forms a sum of two smaller elements in the set. To prove the lemma, it suffices to show that $f$ is an injective function from $\calA_n$ to $\calD_n$.

We first show that for every $A \in \calA_n$ we have $f(A) \in \calD_n$. Let $A \in \calA_n$. It follows from the definition that $f(A)$ is a sum-free subset of $[n]$. Assume by contradiction that $2n+1 \in \sum f(A)$, that is, there exist $k \geq 3$ and $x_1, \ldots, x_k \in f(A)$ such that $\sum_{i=1}^{k}{x_i} = 2n+1$. We claim that the integers $x_1, \ldots, x_k$ can be partitioned into $3$ sets, each of which is of sum at most $n$.
To see this, start with the singleton partition and repeatedly combine pairs of sets whose joint sum is at most $n$. This process must terminate with exactly $3$ sets, since there are no $4$ integers with total sum at most $2n+1$ such that the sum of every two of them is larger than $n$.
Since $A \subseteq [n]$ is closed under addition and contains $f(A)$, it follows that $2n+1 \in 3A$, in contradiction.

To complete the proof, we show that $f$ is injective. For distinct sets $A_1, A_2 \in \calA_n$, let $x$ be the smallest integer that belongs to exactly one of them, and assume without loss of generality that $x \in A_1$ and $x \notin A_2$. Since $A_2$ is closed under addition, $x$ is not a sum of two elements of $A_2$, and by the minimality of $x$, it is not a sum of two elements of $A_1$ as well.
Therefore, $x \in f(A_1)$ and $x \notin f(A_2)$, hence $f(A_1) \neq f(A_2)$.
\end{proof}

\begin{corollary}\label{cor:A_n}
$|\calA_n| = \Theta(2^{n/3})$.
\end{corollary}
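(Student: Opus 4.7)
The plan is to bracket $|\calA_n|$ from above and below by $\Theta(2^{n/3})$ using results already established.

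For the upper bound, I would simply chain Lemma~\ref{lemma:closed-sumfree} with Theorem~\ref{thm:countingIntro} (i.e.\ Theorem~\ref{thm:3A4A5A}): the lemma gives $|\calA_n| \leq |\calD_n|$, where $\calD_n$ is the collection of sum-free subsets of $[n]$ avoiding the forbidden sum $2n+1$, and Theorem~\ref{thm:countingIntro} already provides $|\calD_n| = O(2^{n/3})$.

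For the lower bound, the natural candidate family is the power set of $B_n = [\lceil \tfrac{2}{3}(n+1)\rceil, n]$, which has size $\lfloor \tfrac{1}{3}(n+1)\rfloor$. I would verify that every $A \subseteq B_n$ lies in $\calA_n$. First, $A$ is closed under addition as a subset of $[n]$: for any $x,y \in A \subseteq B_n$ we have $x+y \geq 2\lceil \tfrac{2}{3}(n+1)\rceil > n$, so the implication in the definition of ``closed under addition'' is vacuous. Second, $2n+1 \notin 3A$: any three elements of $A$ sum to at least $3\lceil \tfrac{2}{3}(n+1)\rceil \geq 2(n+1) = 2n+2 > 2n+1$. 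Hence $|\calA_n| \geq 2^{|B_n|} = \Omega(2^{n/3})$.

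Combining the two bounds yields $|\calA_n| = \Theta(2^{n/3})$, as required. There is no real obstacle here: the hard work has been done in Lemma~\ref{lemma:closed-sumfree} (the injection into $\calD_n$ via the sum-free trimming map) and in Theorem~\ref{thm:3A4A5A}; the corollary merely packages these with the trivial lower bound construction.
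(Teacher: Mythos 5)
Your proof is correct and matches the paper's own argument exactly: the upper bound via Lemma~\ref{lemma:closed-sumfree} together with Theorem~\ref{thm:countingIntro}, and the lower bound by noting that every subset of $B_n = [\lceil \frac{2}{3}(n+1)\rceil, n]$ belongs to $\calA_n$. Your explicit verification of the two defining conditions for subsets of $B_n$ is a welcome (and accurate) elaboration of what the paper leaves implicit.
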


\begin{proof}
For the upper bound, combine Lemma~\ref{lemma:closed-sumfree} with Theorem~\ref{thm:countingIntro}.
For the lower bound, consider all subsets of the set $[ \lceil \frac{2}{3}(n+1) \rceil ,n]$ to obtain that $|\calA_n| \geq 2^{\lfloor \frac{1}{3}(n+1) \rfloor}$.
\end{proof}
\noindent
Finally, to derive Theorem~\ref{thm:t-special}, combine Lemma~\ref{lemma:special-closed} with Corollary~\ref{cor:A_n}.

\section*{Acknowledgments}
We are deeply indebted to Wojciech Samotij for helpful discussions and suggestions.
We also thank the anonymous referees for their valuable comments that improved the presentation of the paper.

\bibliographystyle{abbrv}
\bibliography{counting_sf}

\end{document}